\newcommand{\qbin}[2]{\genfrac{[}{]}{0pt}{}{#1}{#2}}
\DeclareMathOperator{\supp}{supp}
\newcommand{\F}{\mathbb{F}}
\newcommand{\Fq}{\mathbb{F}_q}
\newcommand{\transp}{\mathsf{T}}
\newcommand{\ent}{\operatorname{H}}
\newcommand{\C}{\mathcal{C}}
\newcommand{\wt}{\mathrm{wt}}
\newtheorem{lemma}{Lemma}[section]
\newtheorem{cor}[lemma]{Corollary}
\newtheorem{thm}[lemma]{Theorem}
\theoremstyle{definition}
\newtheorem{mydef}[lemma]{Definition}
\newtheorem{ques}[lemma]{Question}
\newtheorem{remark}[lemma]{Remark}
\begin{document}
\title{Blocking sets, minimal codes and trifferent codes}

\author{Anurag Bishnoi\thanks{Delft Institute of Applied Mathematics, Technische Universiteit Delft, 2628 CD Delft, Netherlands. E-mail: \textsf{A.Bishnoi@tudelft.nl}.} \and Jozefien D'haeseleer\thanks{Department of Mathematics: Analysis, Logic and Discrete Mathematics, Ghent University, Krijgslaan 281, 9000 Gent, Belgium. E-mail: \textsf{jozefien.dhaeseleer@ugent.be}. } \and Dion Gijswijt\thanks{Delft Institute of Applied Mathematics, Technische Universiteit Delft, 2628 CD Delft, Netherlands. E-mail: \textsf{D.C.Gijswijt@tudelft.nl}.} \and Aditya Potukuchi\thanks{Department of Electrical Engineering and Computer Science, York University
4700 Keele Street, Toronto, Ontario, Canada, M3J 1P3. E-mail: \textsf{apotu@yorku.ca}}}

\maketitle

\begin{abstract}
    We prove new upper bounds on the smallest size of affine blocking sets, that is, sets of points in a finite affine space that intersect every affine subspace of a fixed codimension.
    We show an equivalence between affine blocking sets with respect to codimension-$2$ subspaces that are generated by taking a union of lines through the origin, and strong blocking sets in the corresponding projective space, which in turn are equivalent to minimal codes. 
    Using this equivalence, we improve the current best upper bounds on the smallest size of a strong blocking set in finite projective spaces over fields of size at least $3$. 
    Furthermore, using coding theoretic techniques, we improve the current best lower bounds on strong blocking set.
    
    Our main motivation for these new bounds is their application to trifferent codes, which are sets of ternary codes of length $n$ with the property that for any three distinct codewords there is a coordinate where they all have distinct values. 
    Over the finite field $\mathbb{F}_3$, we prove that minimal codes are equivalent to linear trifferent codes. 
    Using this equivalence, we show that any linear trifferent code of length $n$ has size at most $3^{n/4.55}$, improving the recent upper bound of Pohoata and Zakharov. 
    Moreover, we show the existence of linear trifferent codes of length $n$ and size at least $\frac{1}{3}\left( 9/5 \right)^{n/4}$, thus (asymptotically) matching the best lower bound on trifferent codes. 
    
    We also give explicit constructions of affine blocking sets with respect to codimension-$2$ subspaces that are a constant factor bigger than the best known lower bound. 
    By restricting to~$\mathbb{F}_3$, we obtain linear trifferent codes of size at least $3^{23n/312}$, improving the current best explicit construction that has size $3^{n/112}$.\\

    \textbf{MSC 2020 codes}: 05D40, 51E21, 51E22, 94B05
\end{abstract}

\section{Introduction}

A classic problem in finite geometry is to study sets of points that block every subspace of a specific dimension. 
This problem was first introduced in 1956 by Richardson~\cite{richardson1956finite}, who called such sets in finite projective spaces blocking coalitions.
We follow the now standard terminology of blocking sets~\cite{blokhuis2011blocking} and study the extremal problem of determining their minimum possible size. 
In combinatorial terminology, finding the smallest size of a blocking set is equivalent to determining the vertex cover number of the hypergraph that has points as its vertices and subspaces of the given dimension as its edges (see~\cite{furedi1988matchings} for a survey on covers of hypergraphs). 
In this paper, our main focus is on blocking sets in finite affine spaces and strong blocking sets in finite projective spaces.

For, $0 \leq s \leq k$ an $s$-blocking set in $\F_q^k$ is a set of points that contains at least one point from every \textit{affine} subspace of dimension $k - s$.
Let $b_q(k, s)$ denote the smallest possible size of an $s$-blocking set in~$\F_q^k$. 
Jamison~\cite{jamison1977covering}, and independently Brouwer and Schrijver~\cite{Brouwer1978}, proved that $b_q(k, 1) \geq (q - 1)k + 1$, using algebraic methods.
This is a foundational result for polynomial methods in combinatorics, and it is often shown as a corollary of the well-known combinatorial nullstellensatz~\cite{alon1999combinatorial, ball2011polynomial}, or the Alon-F\"uredi theorem~\cite{alon1993covering, bishnoi2018zeros}. 
Note that the lower bound of Jamison/Brouwer-Schrijver is easily seen to be tight by taking all points on the $k$ axes of $\F_q^k$.
Using a geometric argument (see for example \cite[Section 3]{ball2011polynomial}), the lower bound on $b_q(k,1)$ implies the following,
\begin{equation}\label{eq:blocking_lb_basic}
b_q(k, s) \geq (q^s - 1)(k - s + 1) + 1.    
\end{equation}

Unlike the $s = 1$ case, there is no $s > 1$ for which the bound in \eqref{eq:blocking_lb_basic} is known to be tight (for infinitely many values of $q, k$).
In fact, it is a major open problem to determine tight lower bounds on $b_q(k, s)$, for any $s > 1$.
Some special cases of this problem have been studied extensively under different names.  
A subset $B\subseteq \F_q^k$ is $s$-blocking if and only if the set $\F_q^k \setminus B$ does not contain a $(k-s)$-dimensional affine subspace. 
Hence, it follows from the density Hales-Jewett theorem~\cite{furstenberg1991density} that for fixed $q$, $d$, and $k \rightarrow \infty$, $b_q(k,k-d)=q^k-o(q^k)$.
In the special case $d=1$ and $q=3$, $1$-blocking sets are the complements of affine caps, and thus determining $b_3(k, k-1)$ is equivalent to the famous \textit{cap set problem}. 
The upper bound on affine caps proved in the work of Ellenberg and Gijswijt~\cite{ellenberg2017large}, which also uses the polynomial method, implies that $b_3(k, k - 1) > 3^k - 2.756^k$. 
Therefore, the lower bound of $b_3(k, k - 1) \geq 3^k - 3^{k - 1} - 1$ from \eqref{eq:blocking_lb_basic} is far from the truth. 
More generally, for $q=2,3$ we have $b_q(k,k-d)\geq q^k-o(c^k)$ for a constant $c<q$ depending on $q$ and $d$. For $q=2$, this is implicit in~\cite{bonin2000size} and for $q=3$ this follows from the multidimensional cap set theorem~\cite{fox2021popular} (see~\cite{gijswijt2021excluding} for a short proof for $q=2,3$). 
The exact asymptotics $b_2(k,k-2)=2^k-\Theta(2^{k/2})$ follows from~\cite{tait2021improved}.

As far as upper bounds on $b_q(k, s)$ are concerned, the general upper bound on vertex cover numbers in terms of fractional vertex cover numbers~\cite{lovasz1975ratio} implies that 
\begin{equation}\label{eq:blocking_up_basic}
b_q(k, s) \leq q^s\left(1+\ln \qbin{k}{s}_q \right),
\end{equation}
which can be upper bounded by $q^s(s(k - s) \ln q + 3)$ using the inequality 
$\qbin{k}{s}_q\leq e^2q^{s(k-s)}$ (see Lemma~\ref{lem:qbinestimate} for more precise estimates).

In this paper, we first prove the following bound which, for fixed $q$ and $s$, improves on \eqref{eq:blocking_up_basic} for $k$ large enough.

\begin{thm}\label{thm:blocking_upper}
Let $s, k$ be integers such that $2 \leq s \leq k$ and let $q$ be a prime power.
If $q = 2$, then 
\[b_q(k, s) \leq \frac{s(k - s) + s + 2}{\log_q{\frac{q^s}{q^s - 1}}} + 1.\]
If $q \geq 3$, then
\[b_q(k,s) \leq  (q^s-1)\cdot \frac{s(k-s)+s+2}{\log_q \frac{q^4}{q^3-q+1}}+1.\]
\end{thm}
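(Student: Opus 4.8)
The plan is to build a small $s$-blocking set by a randomized greedy (alteration) argument, improving \eqref{eq:blocking_up_basic} by covering the affine $(k-s)$-flats with random \emph{subspaces} rather than random points. For $q=2$ I would run the classical fractional-cover argument with explicit constants. There are $\qbin{k}{s}_q q^s$ affine $(k-s)$-flats, and a uniformly random point of $\F_q^k$ lies in a fixed one with probability exactly $q^{-s}$; building the set point by point, always adding a point meeting as many still-uncovered flats as possible, the number of uncovered flats drops by a factor at least $\tfrac{q^s-1}{q^s}$ each step. So $N$ points suffice once $\qbin{k}{s}_q q^s\bigl(\tfrac{q^s-1}{q^s}\bigr)^{N}<1$, i.e.\ $N>\bigl(\log_q\qbin{k}{s}_q+s\bigr)\big/\log_q\tfrac{q^s}{q^s-1}$; since $\qbin{k}{s}_q<q^{s(k-s)}\prod_{j\ge 1}(1-q^{-j})^{-1}$ one has $\log_q\qbin{k}{s}_q<s(k-s)+2$ (cf.\ Lemma~\ref{lem:qbinestimate}), and taking $N$ minimal gives the stated bound, the ``$+1$'' absorbing the rounding.

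For $q\ge 3$ I would instead take $B=\Sigma_1\cup\cdots\cup\Sigma_N$ with the $\Sigma_i$ independent, each the linear span of $s$ i.i.d.\ uniformly random vectors of $\F_q^k$. Then $0\in B$, so every $(k-s)$-flat through the origin is automatically blocked, and $|B|\le N(q^s-1)+1$. The crux is the estimate: for every $(k-s)$-flat $\pi$ with $0\notin\pi$,
\[ \Pr\bigl[\Sigma_i\cap\pi=\emptyset\bigr]\ \le\ \frac{q^3-q+1}{q^4}. \]

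To prove this I would write $\pi=x_0+V$ with $V$ the $(k-s)$-dimensional direction space and pass to $Q=\F_q^k/V\cong\F_q^s$: because $\pi+V=\pi$, the image of $\pi$ is a single \emph{nonzero} point $\bar\pi\in Q$, the image $\bar\Sigma_i$ of $\Sigma_i$ is the column space of a uniformly random $s\times s$ matrix $M$ over $\F_q$ (the quotient map is linear and onto, so it carries the i.i.d.\ generators to i.i.d.\ uniform vectors of $\F_q^s$), and $\Sigma_i\cap\pi=\emptyset$ exactly when $\bar\pi\notin\bar\Sigma_i$. Conditioning on $\operatorname{rank}M$ — given rank $r$, $\bar\Sigma_i$ is a uniform $r$-dimensional subspace, so $\Pr[\bar\pi\in\bar\Sigma_i\mid\operatorname{rank}M=r]=\tfrac{q^r-1}{q^s-1}$ — gives $\Pr[\bar\pi\notin\bar\Sigma_i]=\bigl(q^s-\mathbb{E}[q^{\operatorname{rank}M}]\bigr)/(q^s-1)$. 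For $s=2$ a short computation evaluates this to exactly $\tfrac{q^3-q+1}{q^4}$; for $s\ge 3$ one must show the quantity is non-increasing in $s$, and this I expect to be the main obstacle. I would attack it by an induction that reveals the columns of $M$ one at a time, quotienting $Q$ by each in turn, producing a recursion relating the $s\times s$ inconsistency probability to those for $s\times(s-1)$ and $(s-1)\times(s-1)$ matrices, and then checking the resulting elementary inequalities for $q\ge 3$.

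Granting the estimate, a union bound over the at most $\qbin{k}{s}_q(q^s-1)<q^{\,s(k-s)+s+2}$ flats not through the origin shows $B$ is $s$-blocking with positive probability whenever $N\log_q\tfrac{q^4}{q^3-q+1}>s(k-s)+s+2$. For $q\ge 3$ the slack in $\log_q\qbin{k}{s}_q<s(k-s)+2$ is more than $\log_q\tfrac{q^4}{q^3-q+1}$ (indeed $\log_q\qbin{k}{s}_q<s(k-s)+\log_q\prod_{j\ge1}(1-q^{-j})^{-1}$ and $\log_q\prod_{j\ge1}(1-q^{-j})^{-1}+\log_q\tfrac{q^4}{q^3-q+1}<2$), so the rounding of $N$ is absorbed and the least admissible $N$ satisfies $N\le\bigl(s(k-s)+s+2\bigr)\big/\log_q\tfrac{q^4}{q^3-q+1}$; substituting into $|B|\le N(q^s-1)+1$ gives the claimed inequality. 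The single hard point is the monotonicity of $\Pr[\bar\pi\notin\operatorname{im}M]$ over $s\ge 2$.
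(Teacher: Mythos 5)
Your $q=2$ argument is essentially the paper's: random (or greedily chosen) points, a union bound over the $q^s\qbin{k}{s}_q$ codimension-$s$ flats, and the estimate $\log_q\qbin{k}{s}_q<s(k-s)+2$ from Lemma~\ref{lem:qbinestimate}(a). The ``$+1$'' handles the ceiling, as you say.

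For $q\ge 3$ your random model is genuinely different from the paper's. The paper draws $\pi_1,\dots,\pi_m$ uniformly from the set of \emph{exactly} $s$-dimensional subspaces through the origin, and the heart of its argument is Lemma~\ref{lem:grensqvdm}, a $q$-Vandermonde computation showing $n_q(k,s)\le\frac{q^3-q+1}{q^4}\qbin{k}{s}_q$ for all $s\ge2$. You instead take $\Sigma_i=\operatorname{span}$ of $s$ i.i.d.\ uniform vectors (so $\dim\Sigma_i$ may be less than $s$), and your reduction to the quotient $\F_q^k/V\cong\F_q^s$ is a genuinely clean idea: the image of $\Sigma_i$ is the column space of a uniform $s\times s$ matrix over $\F_q$, so the relevant probability $p_s=\frac{q^s-\mathbb{E}[q^{\operatorname{rank}M}]}{q^s-1}$ has no dependence on $k$ at all, whereas the paper's quantity $n_q(k,s)/\qbin{k}{s}_q$ does and must be bounded uniformly in $k$. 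It is a pleasing check that for $s=2$ your $p_2$ equals \emph{exactly} the constant $\frac{q^3-q+1}{q^4}$ that the paper obtains as the $k\to\infty$ supremum of its ratio.

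The gap is the one you yourself flag: you need $p_s\le p_2=\frac{q^3-q+1}{q^4}$ for all $s\ge 3$, and you only sketch an inductive attack on this, without carrying it out. This is not a cosmetic omission — it is the exact analogue of what the paper's Lemma~\ref{lem:grensqvdm} accomplishes via the $q$-Vandermonde identity, and it is the single step on which the entire $q\ge 3$ bound rests. The monotonicity of $p_s$ in $s$ is plausible and checks out numerically (e.g.\ for $q=3$ one finds $p_2=25/81\approx0.3086$, $p_3\approx0.3012$), but it does not follow from anything you've written, and there is no obvious coupling between an $s\times s$ and an $(s+1)\times(s+1)$ uniform matrix that preserves the ``$e_1\notin$ column space'' event, since both the ambient dimension and the target vector change. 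Until the bound $p_s\le\frac{q^3-q+1}{q^4}$ is proved for all $s\ge2$ (by your proposed recursion or otherwise), the $q\ge3$ half of the theorem is not established by your argument. Once that is supplied, the rest — the union bound over the $(q^s-1)\qbin{k}{s}_q$ flats avoiding the origin, the estimate $\qbin{k}{s}_q\le 2q^{s(k-s)}$ for $q\ge3$, the count $|B|\le m(q^s-1)+1$, and the rounding of $m$ — matches the paper.
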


The bound for $q \geq 3$ that we prove is in fact valid for $q = 2$ as well, but it is worse than the other bound, which is why we have separated the two cases.

We then focus on a particular notion of projective blocking sets and show that it is deeply connected to affine blocking sets. 
A \textit{strong $t$-blocking set} is a set of points in a projective space that intersects ever codimension-$t$ subspace in a set that spans the subspace. 
For $t = 1$, they are simply known as \textit{strong blocking sets}~\cite{davydov2011linear, heger2021short}. 
Recently, these objects have been shown to be equivalent to minimal codes from coding theory~\cite{ABNR2022}.
Minimal codewords in a linear code were first studied in 1980's for decoding purposes and then for their connection to cryptography (see~\cite{chabanne2013towards} and the references therein).
This ultimately led to the study of minimal codes: linear codes where \textit{every} codeword is minimal.
 Over the binary field, minimal codes are also equivalent to linear intersecting codes~\cite{cohen1985linear, cohen1994intersecting}, which are codes with the property that the supports of any two codewords have non-empty intersection.
 
The newfound equivalence between minimal codes and strong blocking sets has immensely increased the interest in proving bounds on the smallest size of a strong blocking sets and finding explicit constructions~\cite{abdn2022, alfarano2022geometric, ABN2023, bartoli2021small, heger2021short}.
We prove the following new equivalence between strong blocking sets and certain affine $2$-blocking sets, and use it to improve the best lower and upper bounds (for $q \geq 3$) on the smallest size of a strong blocking set. 
Let $\mathrm{PG}(k - 1, q)$ denote the $(k - 1)$-dimensional projective space obtained from the vector space $\mathbb{F}_q^k$. 
The points of $\mathrm{PG}(k - 1, q)$ correspond to lines passing through the origin in $\mathbb{F}_q^k$ (see Section~\ref{sec:prelim} for further details). 
Therefore, for every set $\mathcal{L}$ of points in $\mathrm{PG}(k - 1, q)$, we can construct the set $B = \cup_{\ell \in \mathcal{L}} \ell$ of points in $\mathbb{F}_q^k$. 
This allows us to translate the properties of $\mathcal{L}$ in the projective space to properties of the set $B$ in the affine space.

\begin{lemma}
\label{lem:equivalence_strong_affine}
    Let $\mathcal{L}$ be a set of points in $\mathrm{PG}(k - 1, q)$.
    Then $\mathcal{L}$ is a strong $(s - 1)$-blocking set if and only if the set $B = \cup_{\ell \in \mathcal{L}} \ell \subseteq \mathbb{F}_q^k$ is an affine $s$-blocking set.  
\end{lemma}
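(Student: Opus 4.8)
The plan is to unpack both definitions and translate between the projective and affine pictures via the cone construction $B = \cup_{\ell\in\mathcal L}\ell$. Recall that a point $\ell$ of $\mathrm{PG}(k-1,q)$ is a $1$-dimensional subspace of $\mathbb F_q^k$, and an affine subspace of $\mathbb F_q^k$ of dimension $k-s$ is a coset $v + W$ where $W$ is a $(k-s)$-dimensional linear subspace. On the projective side, a codimension-$(s-1)$ subspace of $\mathrm{PG}(k-1,q)$ corresponds to an $(k-s+1)$-dimensional linear subspace $U$ of $\mathbb F_q^k$. The key bookkeeping fact I would state first: a linear subspace $U$ of dimension $k-s+1$ "sees" an affine subspace of dimension $k-s$ inside it (through translation), and conversely every affine $(k-s)$-subspace not through the origin spans a $(k-s+1)$-dimensional linear subspace. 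So the two families of objects — codimension-$(s-1)$ projective subspaces and affine $(k-s)$-subspaces — are linked, though not in bijection, because affine subspaces through the origin are linear and affine subspaces off the origin each sit inside a unique linear space of one higher dimension.

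**The forward direction.** Suppose $\mathcal L$ is a strong $(s-1)$-blocking set; I want to show $B$ meets every affine $(k-s)$-subspace $A = v+W$. If $A$ passes through the origin, then $A = W$ is linear of dimension $k-s$, hence $0\in A\cap B$ (every $\ell\in\mathcal L$ contains $0$), so we are done — actually this shows $B$ trivially blocks all linear subspaces, so the content is in the affine case. If $0\notin A$, let $U = \linspan(A)$, which has dimension $k-s+1$, i.e. codimension $s-1$, so it is a codimension-$(s-1)$ projective subspace $\overline U$. By the strong blocking property, $\mathcal L\cap\overline U$ spans $\overline U$, equivalently the union of the lines $\ell\in\mathcal L$ lying in $U$ spans $U$ as a vector space. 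Now the hard step: from "these lines span $U$" I need to extract a point of $\cup_{\ell\subseteq U}\ell$ that actually lies in the affine hyperplane $A$ of $U$. This is where I expect the main obstacle. The idea is that $A$ is an affine hyperplane of $U$ not through $0$, so it is the solution set of a single linear equation $f(x) = 1$ for some nonzero functional $f$ on $U$; if every line $\ell\subseteq U$ in $\mathcal L$ were entirely contained in the hyperplane $\{f = 0\}$ then the span of those lines would be contained in $\{f=0\}\neq U$, contradicting that they span $U$. Hence some line $\ell\subseteq U$ is not in $\{f=0\}$, so $f$ is non-constant on $\ell$; since $\ell$ is a $1$-dimensional subspace, $f$ takes every value of $\mathbb F_q$ on $\ell$ (here we use that a nonzero linear functional restricted to a line is surjective onto $\mathbb F_q$), in particular the value $1$, giving a point of $\ell\subseteq B$ on $A$.

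**The reverse direction.** Conversely suppose $B$ is an affine $s$-blocking set, and let $\overline U$ be a codimension-$(s-1)$ projective subspace, with $U$ the corresponding linear subspace of dimension $k-s+1$. I must show $\mathcal L\cap\overline U$ spans $\overline U$, i.e. the lines of $\mathcal L$ contained in $U$ span $U$. Suppose not: let $U' = \linspan\{\ell\in\mathcal L : \ell\subseteq U\}$ be a proper subspace of $U$. Pick a linear functional $f$ on $U$ vanishing on $U'$ but not identically zero, and consider the affine hyperplane $A = \{x\in U : f(x) = 1\}$, which has dimension $k-s$ and does not pass through $0$. Since $B$ is $s$-blocking, $B\cap A\neq\varnothing$; pick $b\in B\cap A$, so $b$ lies on some line $\ell\in\mathcal L$ with $b\in U$. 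Here I need: $b\in U$ and $b\neq 0$ (as $f(b)=1\neq 0$) forces the whole line $\ell = \mathbb F_q\cdot b$ to lie in $U$, hence $\ell\subseteq U'$, hence $f(b) = 0$, contradicting $f(b)=1$. So no such proper $U'$ exists and $\mathcal L$ is a strong $(s-1)$-blocking set.

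**Main obstacle and remarks.** The genuinely delicate point in both directions is the translation "union of lines spans $U$" $\Longleftrightarrow$ "the union of lines meets every affine hyperplane of $U$ off the origin," and making sure the dimension counts ($k-s+1$ for the linear cone of a codimension-$(s-1)$ flat, $k-s$ for the affine flats being blocked) line up so that "affine subspace of dimension $k-s$ inside $U$" is exactly "affine hyperplane of $U$." I would also take care to handle the degenerate cases cleanly: affine subspaces through the origin are automatically blocked by $0\in B$ and automatically their linear span has the "right" dimension only if they are already $(k-s)$-dimensional linear — but since a strong blocking set must in particular be a blocking set, and since any cone $B$ contains $0$, these cases are vacuous on the affine side; on the projective side the corresponding statement "$\mathcal L$ spans the whole space" is the $\overline U = \mathrm{PG}(k-1,q)$ instance, handled identically. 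The surjectivity of a nonzero linear functional on a $1$-dimensional subspace is the one small lemma I would invoke without fuss.
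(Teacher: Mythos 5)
Your proof is correct and takes essentially the same approach as the paper: both directions use the cone structure $B=\cup_\ell \ell$, pass to the $(k-s+1)$-dimensional linear span of the affine subspace, and exploit that a line through the origin not contained in a given hyperplane meets every parallel coset. The paper phrases the forward direction by writing $b=v+\lambda u$ and rescaling by $\lambda^{-1}$ rather than via a linear functional, but this is the same argument.
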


Let $b^*_q(k, t)$ denote the minimum size of a strong $t$-blocking set in $\mathrm{PG}(k - 1, q)$. 
From this equivalence, and the upper bound on $b_q(k, 2)$ given by Theorem~\ref{thm:blocking_upper}, we derive the following upper bound on $b^*_q(k, 1)$, which improves the previous best upper bound (see~\cite[Theorem 1.5]{heger2021short}) of
\[b^*_q(k, 1) \leq (q + 1) \frac{2(k - 1)}{1 + \frac{1}{(q + 1)^2 \ln q}}\]
for all $q \geq 3$ and $k$ sufficiently large. 

\begin{thm}
\label{thm:strong_upper}
The minimum size $b^*_q(k, 1)$ of a strong blocking set $\mathrm{PG}(k - 1, q)$ satisfies
\[ b^*_q(k, 1) \leq (q + 1)\frac{2k}{\log_q(\frac{q^4}{q^3-q+1})}.\]
\end{thm}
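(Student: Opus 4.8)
The plan is to obtain this as a direct consequence of Lemma~\ref{lem:equivalence_strong_affine} together with the construction behind Theorem~\ref{thm:blocking_upper}. By Lemma~\ref{lem:equivalence_strong_affine} with $s=2$, a set $\mathcal{L}$ of points of $\mathrm{PG}(k-1,q)$ is a strong blocking set (i.e.\ a strong $1$-blocking set) precisely when $B=\bigcup_{\ell\in\mathcal{L}}\ell\subseteq\mathbb{F}_q^k$ is an affine $2$-blocking set; moreover $|B|=(q-1)\,|\mathcal{L}|+1$. Thus the whole problem reduces to producing a \emph{small} affine $2$-blocking set that happens to be a union of lines through the origin.

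First I would verify that the blocking set furnished by the proof of Theorem~\ref{thm:blocking_upper} in the case $q\ge 3$, $s=2$ (whose bound is valid for all prime powers $q$, as noted after the theorem) is of exactly this type: it is built as a union $B=W_1\cup\cdots\cup W_m$ of $m$ uniformly random $2$-dimensional linear subspaces of $\mathbb{F}_q^k$ through the origin. The key estimate — which is where the quantity $q^4/(q^3-q+1)$ enters — is that such a random $2$-subspace misses a fixed codimension-$2$ affine subspace \emph{not} through the origin with probability at most $(q^3-q+1)/q^4$, while any codimension-$2$ affine subspace through the origin is automatically blocked since $0\in B$. A union bound over the at most $q^2\qbin{k}{2}_q$ codimension-$2$ affine subspaces, using the estimate from Lemma~\ref{lem:qbinestimate} (so that $\log_q(q^2\qbin{k}{2}_q)\le 2k$), then shows that $m\le 2k/\log_q(q^4/(q^3-q+1))$ suffices; this is precisely the $s=2$ instance of Theorem~\ref{thm:blocking_upper}, which gives $|B|\le (q^2-1)\cdot 2k/\log_q(q^4/(q^3-q+1))+1$.

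Now each $W_i$, being a $2$-dimensional subspace, is the union of the $q+1$ points of $\mathrm{PG}(k-1,q)$ it contains. Hence $B$ is the cone over the set $\mathcal{L}$ of points of $\mathrm{PG}(k-1,q)$ lying on some $W_i$, and $|\mathcal{L}|\le (q+1)m$. By Lemma~\ref{lem:equivalence_strong_affine}, $\mathcal{L}$ is a strong blocking set in $\mathrm{PG}(k-1,q)$, and
\[
|\mathcal{L}|=\frac{|B|-1}{q-1}\le\frac{(q^2-1)\cdot\tfrac{2k}{\log_q(q^4/(q^3-q+1))}}{q-1}=(q+1)\,\frac{2k}{\log_q(q^4/(q^3-q+1))},
\]
which is the claimed bound.

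The crux of the argument — and the step I would be most careful about — is the second paragraph: one must be sure that the extremal affine $2$-blocking set of Theorem~\ref{thm:blocking_upper} really can be realized as a union of $2$-dimensional subspaces through the origin (equivalently, as a cone), and in particular that the single-subspace miss probability $(q^3-q+1)/q^4$ holds for every $k$ rather than only in the limit, together with the routine bookkeeping of integer ceilings in the choice of $m$. This structural refinement is exactly what buys the factor $q+1$: if one only used the bare numerical bound $b_q(k,2)\le (q^2-1)\cdot 2k/\log_q(q^4/(q^3-q+1))+1$ and passed to the cone generated by an \emph{arbitrary} optimal affine $2$-blocking set, one would obtain a strong blocking set of size roughly $(q^2-1)\cdot 2k/\log_q(q^4/(q^3-q+1))$, a factor $q-1$ worse.
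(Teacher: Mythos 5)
Your proof is correct and takes essentially the same route as the paper: you apply Lemma~\ref{lem:equivalence_strong_affine} with $s=2$, observe that the probabilistic construction behind the $q\ge 3$ case of Theorem~\ref{thm:blocking_upper} yields an affine $2$-blocking set as a union of $m\le 2k/\log_q(q^4/(q^3-q+1))$ planes through the origin, and note that each plane contributes $q+1$ projective points. The extra bookkeeping via $|B|=(q-1)|\mathcal{L}|+1$ is equivalent to the paper's direct count of $(q+1)m$ lines.
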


Using a mix of coding theoretic and geometric arguments, we then prove the following new lower bound on the size of a strong blocking set. 
\begin{thm}\label{thm:strong_lower}
For any prime power $q$, there is a constant $c_q>1$ such that every strong blocking set in $\mathrm{PG}(k - 1, q)$ has size at least 
$(c_q-o(1))(q+1)(k-1)$, where $o(1)$ only depends on $k$. 
\end{thm}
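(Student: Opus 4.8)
The plan is to use the known equivalence between strong blocking sets in $\mathrm{PG}(k-1,q)$ and minimal linear $[n,k]_q$ codes~\cite{ABNR2022}: a strong blocking set $\mathcal{L}$ of size $n$ is, up to a projective transformation, the set of columns of a generator matrix $G$ of a minimal code $\C$, so it suffices to bound $n$ below in terms of $k$. I would first record a purely geometric inequality that already reproves the known $(q+1)(k-1)$ bound. Fix a codimension-$2$ subspace $\pi$ of $\mathrm{PG}(k-1,q)$; the $q+1$ hyperplanes through $\pi$ partition the complement of $\pi$, and for each such hyperplane $H$ the set $\mathcal{L}\cap H$ spans $H$. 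Writing $m_\pi=|\mathcal{L}\cap\pi|$ and letting $j_\pi$ be the projective dimension of $\langle \mathcal{L}\cap\pi\rangle$, a dimension count inside each $H$ gives $|\mathcal{L}\cap(H\setminus\pi)|\ge k-2-j_\pi$, and summing over the pencil yields
\[
 n\;\ge\; m_\pi+(q+1)(k-2-j_\pi)\qquad\text{for every codimension-$2$ subspace }\pi.
\]
Averaging this over all $\pi$, and using that $\mathcal{L}$ is far smaller than the number of codimension-$2$ subspaces — so a typical $\pi$ misses $\mathcal{L}$ and $j_\pi=-1$ — recovers $n\ge (1-o(1))(q+1)(k-1)$.

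To push the constant strictly above $1$, the idea is to show that the step $|\mathcal{L}\cap(H\setminus\pi)|\ge k-2-j_\pi$ is, on average over the hyperplanes $H$, wasteful by a definite amount: a positive proportion of hyperplane sections $\mathcal{L}\cap H$ must be strictly larger than a minimal spanning subset of $H$. This is where the coding-theoretic input enters. Since $n-\wt(c)=|\mathcal{L}\cap H_c|$ for the hyperplane $H_c$ associated with a codeword $c$, every nonzero $c$ has $\wt(c)\le n-k+1$, and the ``surplus'' $n-k+1-\wt(c)$ is exactly the slack in the inequality above. I would combine (i) the fact that in a minimal code codewords with equal support are scalar multiples of one another, which severely restricts the weight enumerator; (ii) the first-moment identity $\sum_{c\ne 0}\wt(c)=n(q-1)q^{k-1}$; and (iii) a Plotkin- or second-moment-type estimate applied to a well-chosen subcode (for instance the one-dimensional subcode cut out by a heaviest codeword, or the subcode of codewords vanishing on a codimension-$2$ flat that avoids $\mathcal{L}$). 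The aim is to show that any weight enumerator compatible with all these constraints forces the total surplus $\sum_{c\ne 0}\big(n-k+1-\wt(c)\big)$ to be a fixed fraction of its trivial maximum; fed back through the averaged pencil inequality, this fraction becomes the multiplicative improvement $c_q>1$, and optimising the resulting one-parameter linear program over $q$ yields an explicit value of $c_q$.

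The main obstacle is precisely this two-sided surplus estimate. Each geometric fact used — every hyperplane section spans, every codimension-$2$ flat is met by only a few points of $\mathcal{L}$ — is one-sided, and on its own cannot beat $(q+1)(k-1)$; extracting a constant-factor gain genuinely requires understanding how the weights of a minimal code can be spread out, rather than just bounding the largest one. A secondary, purely technical issue is keeping track of the various error terms (of relative order $q^{-k}$ from the subspace counts and $(k-1)/n$ from the averaging), all negligible but needed to present the final bound cleanly as $(c_q-o(1))(q+1)(k-1)$. As an alternative to the projective pencil count, one could route the whole argument through Lemma~\ref{lem:equivalence_strong_affine}, slicing the affine cone $B=\bigcup_{\ell\in\mathcal{L}}\ell\subseteq\F_q^k$ by codimension-$1$ flats and invoking an improved lower bound for codimension-$2$ affine blocking sets; but the coding-theoretic surplus estimate appears unavoidable in either approach.
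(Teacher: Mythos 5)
Your proposal correctly identifies that some coding-theoretic input beyond the spanning property is required, but it does not actually supply it: the two-sided ``surplus'' estimate on the weight enumerator that your plan hinges on is left as an open obstacle, and as stated the argument does not go beyond $(q+1)(k-1)$. (Also, the claim that a ``typical'' codimension-$2$ flat $\pi$ misses $\mathcal{L}$ is false in the regime of interest: the expected value of $m_\pi=|\mathcal{L}\cap\pi|$ is of order $n/q^2\approx k/q$, which can be far larger than $1$, so your averaging step needs more care even to recover the known bound.) The paper's proof avoids all of this. It does not analyse the whole weight distribution; it only needs the \emph{minimum} distance, via the MRRW bound (Theorem~\ref{thm:MRRW}). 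Concretely: if $n<c(q+1)(k-1)$ with $1<c<c_q$, the associated nondegenerate $[n,k]_q$ code has rate $\geq R:=1/(c(q+1))$, and since $M_q(\delta)<R$ for $\delta:=(q-1)/(c(q+1))$, the MRRW bound forces the minimum distance to be less than $\delta n$ for $k$ large. By Lemma~\ref{lem:intersection} some projective hyperplane $H$ then meets $\mathcal{L}$ in more than $(1-\delta)n$ points.

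The second ingredient you are missing is how the strong blocking set property is then exploited. Passing to the affine cone $B=\bigcup_{\ell\in\mathcal{L}}\ell$ via Lemma~\ref{lem:equivalence_strong_affine}, the fact that $H$ is heavy forces, by pigeonhole over the $q$ parallel affine hyperplanes, some affine hyperplane $H'\neq H$ to be \emph{light}: $|B\cap H'|\leq \delta n$. But $B$ is an affine $2$-blocking set, so $B\cap H'$ must be a $1$-blocking set inside $H'\cong\F_q^{k-1}$, and the Jamison/Brouwer--Schrijver bound gives $|B\cap H'|\geq (q-1)(k-1)+1$, a contradiction. So the improvement does not come from understanding how weights are ``spread out'' in a minimal code; it comes from combining the minimum-distance bound with the $1$-blocking-set bound applied to a single well-chosen parallel slice. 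Your closing remark that a surplus/weight-enumerator estimate ``appears unavoidable in either approach'' is therefore not right --- the MRRW bound applied to the minimum distance alone is enough, and this is the step your proposal needs to fill.
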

The constant $c_q$ can be taken to be the unique solution $x\geq 1$ to the equation 
\[M_q \left(\frac{q - 1}{x(q+1)}\right) = \frac{1}{x(q + 1)},\]
where $M_q$ is the function appearing in the MRRW bound for linear codes (see Theorem~\ref{thm:MRRW} below). 
We do not have a closed formula for $c_q$, but for any $q$ it can be computed efficiently up to an arbitrary order of precision using a computer. 
Some estimates on $c_q$ have been obtained in \cite{scotti2023lower}. 
For every $q \geq 3$, our result improves the previous best lower bound of $(q + 1)(k - 1)$~\cite{ABNR2022}.
For $q = 2$, our result matches the current best lower bound, which can be deduced from~\cite{katona1983minimal}.

\subsection{Linear trifferent codes}
Our study of affine $2$-blocking sets and strong blocking sets is mainly motivated by a new connection to the trifference problem, which we establish in this paper. 
A \textit{perfect $q$-hash} code of length $n$ is a subset~$C$ of $\{0, 1, \dots, q - 1\}^n$ such that for any $q$ distinct elements in $C$, there is a coordinate where they have pairwise distinct values.
Understanding the largest possible size of a perfect $q$-hash code is a natural extremal problem that has gained much attention since the 1980s because of its connections to various topics in cryptography, information theory, and computer science~\cite{guruswami2022beating, km88, wx2001, xy2021}. 
We will focus on the $q = 3$ case where these codes are also known as \textit{trifferent codes}, and the problem of determining their largest possible size is called the \textit{trifference problem}. 
Let $T(n)$ denote the largest size of a trifferent code of length~$n$. 
The exact value of $T(n)$ is only known for $n$ up to $10$, where the last six values were obtained very recently via computer searches~\cite{Fiore2022, Kurz2024}. 
Asymptotically, the upper bound 
\begin{equation}\label{eq:trifference_upper_basic}
T(n) \leq 2 (3/2)^n  
\end{equation}
obtained by K\"orner~\cite{Korner1973} in 1973 is still the best known upper bound, despite considerable effort (see for example~\cite{costa2021gap} where it is shown that a direct application of the slice rank method will not improve the bound.)
Similarly, the current best lower bound 
\begin{equation}\label{eq:trifference_lower_basic}
T(n) \geq (9/5)^{n/4}   
\end{equation}
was proved by K\"orner and Marton~\cite{km88} in 1988, who used a ``probabilistic lifting'' of the optimal trifferent code of length~$4$. 
A natural restriction of the trifference problem is to study \textit{linear} trifferent codes, that is, trifferent codes $C$ in $\F_3^n$ which are also vector subspaces. This restriction is motivated by the fact that the best known explicit constructions of trifferent codes are linear~\cite{wx2001}. 
Moreover, the probabilistic construction of K\"orner and Marton~\cite{km88} uses an optimal linear trifferent code in $\F_3^4$. 
Let $T_L(n)$ denote the largest size of a linear trifferent code of length~$n$. 
Pohoata and Zakharov~\cite{Pohoata2022} have recently proven the following upper bound on $T_L(n)$, which shows a big separation from the known upper bounds on $T(n)$: 
\begin{equation}\label{eq:pohoata}
    T_L(n) \leq 3^{(1/4 - \epsilon)n}.
\end{equation}
The $\epsilon$ in their result is a small positive number, not determined explicitly.
We prove an equivalence between linear trifferent codes and affine $2$-blocking sets in $\F_3^k$ that are a union of lines through the origin.
By Lemma~\ref{lem:equivalence_strong_affine}, this also implies an equivalence between strong blocking sets over $\F_3$ and linear trifferent codes. 
In fact, we show that a linear code $C$ is trifferent if and only if it is minimal. 
By using our new lower bounds on strong blocking sets, we deduce the following improvement to \eqref{eq:pohoata} and prove a lower bound on $T_L(n)$ by using Theorem~\ref{thm:strong_upper}. 

\begin{thm}
\label{thm:trifferent_bounds}
For $n$ large enough, the largest size of linear trifferent code of lenght $n$ has the following bounds:
\[\tfrac{1}{3} (9/5)^{n/4} \leq T_L(n) \leq 3^{n/4.55}.\]
\end{thm}

In \cite{wx2001} it was shown that $T_L(n) \geq 3^{n/112}$, via an explicit construction, whereas our bound is roughly $3^{n/7.48}$. 
Note that our lower bound on $T_L(n)$ is only a factor of $3$ away from the best lower bound $T(n) \geq (9/5)^{n/4}$ given in~\eqref{eq:trifference_lower_basic}.
Moreover, the lower bound on $T(n)$ was obtained by constructing non-linear trifferent codes~\cite{km88}, whereas we have constructed linear trifferent codes.\footnote{It has been pointed out to us by the referee that the idea of K\"orner and Marton can also be used to get a construction of linear trifferent codes that have size at least $(9/5)^{n/4}$.}
Even a tiny further improvement will break the current best lower bounds for the trifference problem, which have not been improved since 1988. 
Therefore, our results give a new motivation for studying linear trifferent codes and strong blocking sets. 

Our lower bounds on trifferent codes so far, which follow from our upper bounds on affine $2$-blocking sets, are based on probabilistic constructions. 
It is of great interest to also obtain explicit constructions (see for example~\cite{wx2001} and the references therein). 
In this direction, we first provide an explicit construction of affine $2$-blocking sets whose sizes are just a constant factor away from the best lower bound (given in \eqref{eq:blocking_lb_basic}).

\begin{thm}
\label{thm:explicit}
There is an absolute constant $c$, such that for every prime power $q$, and $k$ large enough, we can explicitly construct $c (q + 1) k$ lines through the origin in $\mathbb{F}_q^k$ whose union blocks every codimension-$2$ affine subspace, thus implying 
\[b_q(k, 2) \leq c (q^2 - 1) k + 1.\]
\end{thm}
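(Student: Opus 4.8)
The plan is to build the blocking set recursively/concatenation-style, mimicking the way explicit strong blocking sets have been constructed as "tensor-like" or concatenated objects, but working directly in the affine world where the codimension-2 condition is cleanest. By Lemma 1.3, it suffices (for the stated consequence) to produce a set that is a union of $c(q+1)k$ lines through the origin and which meets every codimension-$2$ affine subspace; equivalently, a strong blocking set in $\mathrm{PG}(k-1,q)$ of size $c(q+1)k$. The key reformulation I would use: a set $B=\cup_{\ell\in\mathcal L}\ell$ blocks every codimension-$2$ affine subspace iff for every affine line $L$ (a $1$-dimensional affine subspace) and every hyperplane direction, ... — more usefully, by Lemma 1.3 it is equivalent to $\mathcal L$ being a strong blocking set, which by the standard characterization (see \cite{heger2021short, ABNR2022}) means: for every hyperplane $H$ of $\mathrm{PG}(k-1,q)$, the set $\mathcal L \cap H$ is \emph{not} contained in any hyperplane of $H$, i.e. $\langle \mathcal L\cap H\rangle = H$. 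So the combinatorial target is an explicit point set that spans the intersection with every hyperplane.

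The main step is a \textbf{concatenation / blow-up construction}. Start from a small explicit strong blocking set $\mathcal L_0$ in $\mathrm{PG}(m-1,q)$ of size $O(qm)$ — for instance the one of Wu–Xing type \cite{wx2001}, or an explicit asymptotically good minimal code — which exists for $m$ in a fixed range; the constant factor it loses is absorbed into $c$. Then lift it to large dimension by replacing each coordinate/point by a copy of a fixed short ``gadget'' (an explicit strong blocking set or covering configuration in constant dimension), analogous to code concatenation: take an outer explicit asymptotically good linear code $C_{\mathrm{out}}\subseteq \F_{q'}^{N}$ with $q' = q^{m}$, and an inner strong blocking set realizing $\F_{q'}$-points inside $\F_q^{m}$. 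The union of the resulting lines has size linear in the total dimension $k=mN$, with the leading constant being (size of inner gadget)$/m$ times (rate loss of outer code)$^{-1}$, all bounded by an absolute constant times $(q+1)$. I would verify the blocking property by the standard concatenation argument: a codimension-$2$ affine subspace of $\F_q^k$, when sliced along the $N$ blocks, either already meets one inner gadget in a blocking configuration, or induces on the outer code a low-codimension constraint that the outer minimal/good code defeats — this is where the ``good distance $\Rightarrow$ strong blocking'' direction of the equivalence (as used in \cite{ABNR2022, ABN2023}) does the work.

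An alternative, possibly cleaner route I would try first: take a \textbf{random-like but explicit object}, namely the image of an explicit $\varepsilon$-biased set or an explicit expander/Ramanujan-type incidence structure, and show directly that its cone blocks all codimension-$2$ affine subspaces. Concretely, a set $\mathcal L$ of points in $\mathrm{PG}(k-1,q)$ works as a strong blocking set provided that for every hyperplane $H$ and every hyperplane $H'\subset H$, there is a point of $\mathcal L$ in $H\setminus H'$; a linear-sized explicit design (e.g. from algebraic curves / AG codes, or from explicit dimension expanders) can be made to satisfy this ``two-level covering'' property. I would then just count: $O((q+1)k)$ points suffice for this two-level condition with an absolute constant, completing the argument and the bound $b_q(k,2)\le c(q^2-1)k+1$ via Lemma 1.3.

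The hard part will be pinning down the \emph{absolute} constant $c$ uniformly over all prime powers $q$ — the obstruction is that the inner gadget's size must be $O(qm)$ with a constant independent of $q$, and that the outer explicit code must have rate bounded below by a constant independent of $q$; combining an explicit asymptotically good family (such as AG codes over $\F_{q^m}$, which requires $q^m$ to be a square so one takes $m$ even) with the Lemma 1.3 equivalence is what makes the constant absolute, but checking that the concatenation preserves the strong-blocking property with only a constant-factor loss — rather than a $\log q$ or $q$ loss — is the delicate point and the step I expect to spend the most care on.
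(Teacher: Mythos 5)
You correctly reduce the theorem to producing an explicit strong blocking set of size $c(q+1)k$ in $\mathrm{PG}(k-1,q)$ and then pulling it back through Lemma~\ref{lem:equivalence_strong_affine}; that reduction is exactly the one-line content of the paper's proof, which then invokes the explicit construction of Alon, Bishnoi, Das and Neri (Theorem~\ref{thm:abdn}) as a black box. The gap is in the construction itself. Your main route is a concatenation argument, and as you yourself suspect, it does not give a $q$-independent constant: an inner gadget realizing $\F_{q^m}$-points inside $\mathrm{PG}(m-1,q)$ cannot be made of size $O((q+1)m)$ with an absolute constant, and the known concatenation-type construction (Bartoli--Borello~\cite{bartoli2021small}) lands at $\sim q^4k/4$, not $O((q+1)k)$. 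So the concatenation route proves a weaker statement with a $q$-dependent constant, which is exactly the state of the art that~\cite{abdn2022} was written to overcome.

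Your second route ("explicit expander/Ramanujan incidence structure'') gestures in the right direction but is missing the concrete mechanism from~\cite{abdn2022}, which the paper sketches after the proof (Lemma~\ref{lem:main_const} and its corollaries). That mechanism is: take a (multi)set $\mathcal{P}=\{P_1,\dots,P_n\}$ of points in $\mathrm{PG}(k-1,q)$ coming from an explicit asymptotically good $[n,k,d]_q$ code, so that every hyperplane contains at most $n-d$ of them; take a constant-degree explicit expander $G$ on $\mathcal{P}$; and let the strong blocking set be $\bigcup_{P_iP_j\in E(G)}\langle P_i,P_j\rangle$. The strong blocking property follows from the vertex integrity bound $\iota(G)=\min_{S}\bigl(|S|+\kappa(G-S)\bigr)=\Omega(n)$ for constant-degree expanders: for any hyperplane $H$, deleting the at most $n-d$ points of $\mathcal{P}$ lying on $H$ leaves a connected component $C$ of size $\Omega(n)>k$, and walking along the edges of $C$ shows $\langle \mathcal{P}\cap H\rangle$ gains a new dimension at each step until it spans $H$. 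The size is $\frac{\Delta}{2}n\cdot(q+1)=O((q+1)k)$ once the code has rate bounded below by an absolute constant. Without this "lines through expander edges'' plus "vertex integrity'' idea — which is the genuinely new ingredient — neither of your sketches yields a construction whose size is $c(q+1)k$ with $c$ independent of $q$, which is what the theorem asserts.
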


Our construction is based on a recent breakthrough on explicit constructions of strong blocking sets~\cite{abdn2022}.
By restricting to $q = 3$, and using a different construction of strong blocking sets~\cite{bartoli2021small}, 
 we obtain a new explicit construction of linear trifferent codes, which improves the current best explicit construction of length-$n$ linear trifferent codes from dimension $n/112$ (obtained in ~\cite{wx2001}) to dimension $n/48$.
 By using a linear trifferent code of dimension $6$ in $\mathbb{F}_3^{24}$ and standard concatenation with algebraic geometric codes, we prove the following. 

\begin{thm}
\label{thm:explicit_trifferent}
    There exists an infinite sequence of lengths $n$ for which there is an explicit construction of linear trifferent codes of dimension $\lceil 23n/312 \rceil$. 
\end{thm}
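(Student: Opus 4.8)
The plan is to combine the equivalences established earlier in the paper with the explicit strong blocking set construction of Bartoli and Borello~\cite{bartoli2021small}, specialised to $q = 3$. Recall that by Lemma~\ref{lem:equivalence_strong_affine} (with $s = 2$) a set $\mathcal{L}$ of points in $\mathrm{PG}(k-1, 3)$ is a strong blocking set (i.e.\ a strong $1$-blocking set) if and only if $B = \cup_{\ell \in \mathcal{L}} \ell \subseteq \mathbb{F}_3^k$ is an affine $2$-blocking set, and that the paper has shown a linear code $C \le \mathbb{F}_3^n$ is trifferent precisely when it is minimal, which in turn is equivalent (via the standard correspondence between linear codes and projective point systems, cf.\ \cite{ABNR2022}) to the columns of a generator matrix of $C$ forming, as a multiset of projective points, a strong blocking set in $\mathrm{PG}(k-1, 3)$ where $k = \dim C$. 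So a strong blocking set of size $n$ in $\mathrm{PG}(k-1,3)$ yields a linear trifferent code of length $n$ and dimension $k$, and to prove the theorem it suffices to exhibit, for infinitely many $n$, an explicit strong blocking set in $\mathrm{PG}(k-1, 3)$ with $k \ge \lceil 7n/240 \rceil$, equivalently of size $n \le \tfrac{240}{7} k$.

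Next I would invoke the construction of~\cite{bartoli2021small}, which gives explicit strong blocking sets in $\mathrm{PG}(k-1, q)$ built from a ``tetrahedron''-type union of subspaces spanned by small blocking sets in lower-dimensional subspaces (a recursive/linear-algebraic construction using linear sets or generalised projective spaces). For fixed small $q$ this yields strong blocking sets whose size is a linear function $c_q \cdot k$ of the dimension, with an explicitly computable constant $c_q$. The key arithmetic step is to track this constant for $q = 3$: I would plug $q = 3$ into their size bound and verify that, along the subsequence of admissible dimensions $k$ (their construction only works for certain $k$, e.g.\ $k$ a multiple of some fixed integer, which is why the statement restricts to ``an infinite sequence of lengths $n$''), the ratio $n/k$ is at most $240/7 \approx 34.28$. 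Equivalently, one checks $c_3 \le 240/7$, so that $\dim C = k \ge 7n/240$, and since the dimension is an integer we get $\dim C \ge \lceil 7n/240 \rceil$ after possibly passing to the ceiling. Explicitness is inherited directly: the Bartoli–Borello construction is given by explicit generator matrices / explicit unions of coordinate-type subspaces, hence so is the resulting trifferent code.

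The main obstacle is bookkeeping rather than conceptual: one must carefully extract the exact size-versus-dimension constant from~\cite{bartoli2021small} specialised to $q = 3$ (their general bound is stated for all $q$ and may involve floor/ceiling terms and the choice of an internal parameter that one optimises), confirm it meets the $240/7$ threshold, and pin down precisely which lengths $n$ occur so that the claimed infinite sequence is genuine. A secondary point to handle with care is the passage through the code–geometry dictionary: the multiset of projective points coming from a generator matrix must actually span $\mathrm{PG}(k-1,3)$ and the strong blocking property must be the one matching minimality of the code; both are standard (and already used in the paper for Theorems~\ref{thm:strong_upper} and~\ref{thm:strong_lower}), so I would simply cite that correspondence. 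Once the constant check goes through, the theorem follows immediately.
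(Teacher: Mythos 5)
Your proposal is correct and follows essentially the same route as the paper: both go through the equivalence between linear trifferent codes and strong blocking sets in $\mathrm{PG}(k-1,3)$, then invoke the explicit Bartoli--Borello construction specialised to $q=3$ (the paper cites \cite[Corollary 3.3]{bartoli2021small} with $q_0 = 3$) to get strong blocking sets of size at most $240k/7$ along an infinite sequence of dimensions $k$. The only difference is bookkeeping detail: the paper pins down the precise corollary yielding the $240/7$ constant, whereas you leave that extraction as a step to be checked; conceptually the arguments are identical.
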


\subsection*{Outline of the paper}
In Section \ref{sec:prelim}, we describe some basic theory of finite geometry and error-correcting codes. 
We also recall previous results on strong blocking sets and prove Lemma~\ref{lem:equivalence_strong_affine}. 
In Section \ref{sec:upper}, we prove Theorem~\ref{thm:blocking_upper}. We use this bound for the case $s=2$, along with Lemma~\ref{lem:equivalence_strong_affine}, to prove Theorem~\ref{thm:strong_upper}. 
In Section~\ref{sec:lower}, we prove Theorem~\ref{thm:strong_lower} using the MRRW bound from coding theory and lower bounds on affine blocking sets.  
In Section~\ref{sec:explicit}, we prove Theorem~\ref{thm:explicit} using the new explicit construction of strong blocking sets. 
Finally, in Section~\ref{sec:trifferent}, we focus on linear trifferent codes and prove Theorems~\ref{thm:trifferent_bounds} and \ref{thm:explicit_trifferent}. 
The diagram in Figure~\ref{fig:equiv} summarises all the equivalences between blocking sets, minimal codes and trifferent codes, which form the backbone of our work. 
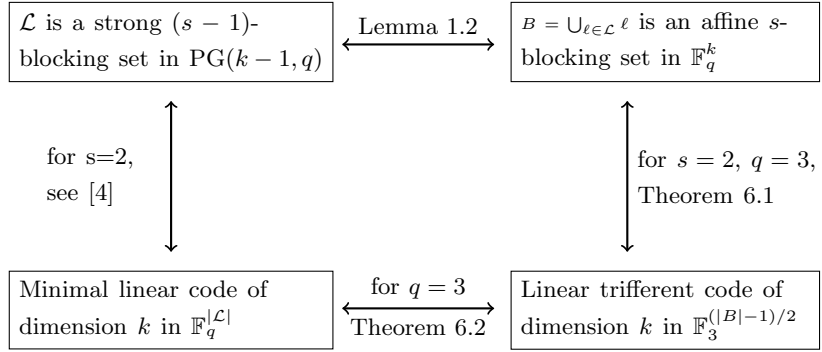
\begin{figure}[h!] 
\label{fig:equiv}
\centering
\begin{tikzpicture}[scale = 0.5]
\node[draw, text width=4cm,  above] at (0,0.5) {\footnotesize $\mathcal{L}$ is a strong $(s-1)$-blocking set in PG$(k-1,q)$};
\draw [thick, <->] (0,0)   --(0,-4);
\node[left, text width=1.5cm] at (0, -2) {\footnotesize for s=2,  see \cite{ABNR2022}};

\node[draw, text width=4cm, below] at (0,-4.5) {\footnotesize Minimal linear code of dimension $k$ in $\F_q^{|\mathcal{L}|}$};
\draw [thick, <->] (4.5,1.5)   --(8.5,1.5);
\node[above] at (6.5,1.5) {\footnotesize Lemma~\ref{lem:equivalence_strong_affine}};

\node[draw, text width=3.8cm, below] at (13,-4.5) {\footnotesize Linear trifferent code of dimension $k$ in $\mathbb{F}_3^{(|B|-1)/2}$ };
\draw [thick, <->] (4.5,-5.5)   --(8.5,-5.5);
\node[above] at (6.5,-5.5) {\footnotesize for $q = 3$};
\node[below] at (6.5,-5.5) {\footnotesize Theorem~\ref{thm:equivalence_triff_min}};

\node[draw, text width=3.8cm, above] at (13,0.5) {\footnotesize {\tiny $B = \bigcup_{\ell\in \mathcal{L}} \ell$} is an affine $s$-blocking set in $\mathbb{F}_q^k$};
\draw [thick, <->] (12,0)   --(12,-4);
\node[right, text width=2.4cm] at (12, -2) {\footnotesize  for $s=2$, $q=3$,  Theorem~\ref{thm:triff_to_block}};
\end{tikzpicture}
\caption{Equivalences between blocking sets and codes.}
\end{figure}

\section{Preliminaries}
\label{sec:prelim}

Throughout this paper we will use $q$ to denote a prime power. 
A projective space of dimension $k - 1$ over the finite field $\F_q$ is defined as 
\[\mathrm{PG}(k - 1, q) \coloneqq \left(\F_q^{k} \setminus \{\vec{0}\} \right) / \sim,\]
where 
$u \sim v$ if $u = \lambda v$ for some nonzero $\lambda \in \F_q$. 
The equivalence class that a nonzero vector $v$ belongs to is denote by $[v]$. 
The $1$-dimensional, $2$-dimensional, $\dots$, $(k - 1)$-dimensional vector subspaces of $\F_q^{k}$ correspond to the points, lines, $\dots$, hyperplanes of $\mathrm{PG}(k - 1, q)$. 
In fact, we will often identify the points of $\mathrm{PG}(k - 1, q)$ with the $1$-dimensional vector subspaces of $\F_q^k$, that is, lines of the affine space $\F_q^k$ that pass through the origin. 
Note that while the dimension of a projective subspace is one less than the dimension of its corresponding vector subspace, the codimensions remain the same. 
For a subset $S$ of points in $\mathrm{PG}(k - 1, q)$, the subspace formed by taking the linear span of $S$ is denoted by~$\langle S \rangle$. 
We refer to~\cite{casse2011} for further details on projective spaces.  

\begin{mydef}[$q$-binomial coefficient]
For integers $0 \leq s \leq k$, the $q$-binomial coefficient is defined as 
\[\qbin{k}{s}_q = \frac{(q^k - 1) \cdots (q^{k - s+ 1} - 1)}{(q^s - 1) \cdots (q - 1)},\]
where we define the empty product to be $1$. For other values of $s$ and $k$, we define $\qbin{k}{s}_q$ to be zero.
\end{mydef}

The number $\qbin{k}{s}_q$ is also known as the Gaussian coefficient. We will often use the fact that $\qbin{k}{s}_q=\qbin{k}{k-s}_q$. A straightforward double counting argument shows the following. 

\begin{lemma}\label{lem:sub_count}
\mbox{}
\begin{itemize}
    \item[\textup{(a)}] The number of $(s-1)$-dimensional subspaces of $\mathrm{PG}(k-1, q)$ is equal to $\qbin{k}{s}_q$.

    \item[\textup{(b)}] The number of $s$-dimensional affine subspaces of $\mathbb{F}_q^k$ is equal to $q^{k - s}\qbin{k}{s}_q$.

\end{itemize}
\end{lemma}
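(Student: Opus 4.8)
The plan is to prove both parts by the standard double counting of ordered linearly independent tuples of vectors, establishing (a) first and then obtaining (b) as an immediate consequence.

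For part (a), recall that an $(s-1)$-dimensional subspace of $\mathrm{PG}(k-1,q)$ is exactly an $s$-dimensional vector subspace of $\F_q^k$, so it suffices to count the latter. First I would count the number $N$ of ordered $s$-tuples $(v_1,\dots,v_s)$ of linearly independent vectors in $\F_q^k$: there are $q^k-1$ choices for $v_1$ (any nonzero vector), $q^k-q$ choices for $v_2$ (anything outside $\langle v_1\rangle$), and in general $q^k-q^{i-1}$ choices for $v_i$ once $v_1,\dots,v_{i-1}$ are fixed, whence $N=\prod_{i=0}^{s-1}(q^k-q^i)$. Every such tuple is an ordered basis of the $s$-dimensional subspace it spans, and conversely each $s$-dimensional subspace $W$ has exactly $\prod_{i=0}^{s-1}(q^s-q^i)$ ordered bases (the same count with $k$ replaced by $s$). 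Hence the number of $s$-dimensional subspaces equals $N\big/\prod_{i=0}^{s-1}(q^s-q^i)$. Pulling the factor $q^{0+1+\cdots+(s-1)}$ out of the numerator and the denominator, these powers of $q$ cancel, leaving $\frac{(q^k-1)(q^{k-1}-1)\cdots(q^{k-s+1}-1)}{(q^s-1)(q^{s-1}-1)\cdots(q-1)}=\qbin{k}{s}_q$.

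For part (b), an $s$-dimensional affine subspace of $\F_q^k$ is a coset $v+W$ of an $s$-dimensional vector subspace $W$. The direction space $W$ is recovered from $v+W$ as the set of pairwise differences of its elements, so two such cosets coincide only if their direction spaces agree; thus the $s$-dimensional affine subspaces are partitioned according to $W$. For a fixed $W$ the number of distinct cosets is $[\F_q^k:W]=q^{k}/q^{s}=q^{k-s}$, and multiplying by the number $\qbin{k}{s}_q$ of choices for $W$ from part (a) yields the claimed total $q^{k-s}\qbin{k}{s}_q$.

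There is no real obstacle here; the only points needing a little care are the bookkeeping of the powers of $q$ in the cancellation in part (a), and the remark in part (b) that the direction space is an invariant of an affine subspace, so that neither over- nor under-counting occurs.
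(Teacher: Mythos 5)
Your proof is correct, and it is exactly the ``straightforward double counting argument'' the paper invokes without spelling out: counting ordered bases to get part (a) and then counting cosets per direction space for part (b). Nothing to add.
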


We will use the following estimates on Gaussian coefficients. 
\begin{lemma}\label{lem:qbinestimate} Let $q$ be a prime power and let $1\leq s\leq k$ be integers.
\begin{itemize}
\item[\textup{(a)}] We have:
\[
1\leq q^{-s(k-s)}\qbin{k}{s}_q \leq \frac{q}{q-1}e^{\frac{q}{(q^2 - 1)(q-1)}}.
\]
\item[\textup{(b)}] If $s\geq 3$ and $k\geq s+3$, we have:
\[
\frac{q^3}{(q^2-1)(q-1)}\leq q^{-s(k-s)}\qbin{k}{s}_q. 
\]
\end{itemize}
\end{lemma}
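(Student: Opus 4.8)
The plan is to reduce everything to one clean reformulation of the Gaussian coefficient. Writing $\qbin{k}{s}_q=\prod_{i=1}^{s}\frac{q^{k-s+i}-1}{q^{i}-1}$ and pulling out $q^{k-s}$ from the $i$-th factor via $q^{k-s+i}-1=q^{k-s+i}(1-q^{-(k-s+i)})$ and $q^{i}-1=q^{i}(1-q^{-i})$, one gets the identity
\[
q^{-s(k-s)}\qbin{k}{s}_q=\prod_{i=1}^{s}\frac{1-q^{-(k-s+i)}}{1-q^{-i}}.
\]
Since $k\ge s$ forces $k-s+i\ge i$ and $x\mapsto 1-q^{-x}$ is increasing, every factor is at least $1$, which gives the lower bound $1$ in (a) immediately.

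For the upper bound in (a), I bound each factor above by $\frac{1}{1-q^{-i}}$ and then enlarge the product to $i=\infty$ (legitimate, as the extra factors exceed $1$), splitting off $i=1$ as $\frac{q}{q-1}$:
\[
q^{-s(k-s)}\qbin{k}{s}_q\le\frac{q}{q-1}\prod_{i\ge 2}\frac{1}{1-q^{-i}}.
\]
Taking logarithms and using $-\ln(1-x)\le\frac{x}{1-x}$ on $[0,1)$, together with $1-q^{-i}\ge 1-q^{-2}$ for $i\ge 2$, the tail becomes $\sum_{i\ge 2}\frac{q^{-i}}{1-q^{-i}}\le\frac{1}{1-q^{-2}}\sum_{i\ge 2}q^{-i}=\frac{q}{(q^{2}-1)(q-1)}$, and exponentiating yields exactly the stated bound.

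Part (b) is the one requiring care, and it is the step I expect to be the real obstacle: the naive move of keeping only the first one or two factors and bounding the rest by $1$ loses too much (it would require $(1-q^{-4})(1-q^{-5})\ge 1$, which is false). Instead I re-pair the $s$ numerator factors with the $s$ denominator factors after a shift by $3$: using $s\ge 3$, match numerator exponent $k-s+i$ with denominator exponent $i+3$ for $i=1,\dots,s-3$, leaving a boundary fraction carrying numerator exponents $k-2,k-1,k$ and denominator exponents $1,2,3$, i.e.
\[
q^{-s(k-s)}\qbin{k}{s}_q=\left(\prod_{i=1}^{s-3}\frac{1-q^{-(k-s+i)}}{1-q^{-(i+3)}}\right)\cdot\frac{(1-q^{-(k-2)})(1-q^{-(k-1)})(1-q^{-k})}{(1-q^{-1})(1-q^{-2})(1-q^{-3})}.
\]
Each factor in the product is $\ge 1$ because $k-s+i\ge i+3$ (here is where $k\ge s+3$ enters), and since additionally $k\ge 6$, the boundary fraction is at least $\frac{(1-q^{-4})(1-q^{-5})(1-q^{-6})}{(1-q^{-1})(1-q^{-2})(1-q^{-3})}$. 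Since $\frac{q^{3}}{(q^{2}-1)(q-1)}=\frac{1}{(1-q^{-1})(1-q^{-2})}$, the claim collapses to the elementary inequality $(1-q^{-4})(1-q^{-5})(1-q^{-6})\ge 1-q^{-3}$, which I will deduce from $(1-a)(1-b)(1-c)\ge 1-a-b-c$ for $a,b,c\in[0,1]$ (expand and note $ab+ac+bc\ge abc$) combined with $q^{-4}+q^{-5}+q^{-6}\le q^{-3}$, equivalently $q^{3}\ge q^{2}+q+1$, valid for all prime powers $q\ge 2$. A quick numerical check at the extreme case $k=s+3$, $q=2$ (where the ratio is $1395/512\approx 2.72$ against $8/3\approx 2.67$) confirms that no constants have been mishandled.
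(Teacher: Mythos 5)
Your proof is correct and takes essentially the same route as the paper: for (a) both lower-bound each factor of $\prod_{i}\frac{q^{k-i}-1}{q^{s-i}-1}$ by $q^{k-s}$ and upper-bound via the infinite product $\prod_{i\ge 1}\frac{q^i}{q^i-1}=\prod_{i\ge 1}\frac{1}{1-q^{-i}}$ with the same $\ln(1+x)\le x$ / $\frac{1}{q^i-1}\le\frac{q^{-i}}{1-q^{-2}}$ estimate, and for (b) both re-pair the three largest numerator factors with the three smallest denominator factors and bound the remaining $s-3$ ratios by $q^{k-s-3}$ (equivalently $\ge 1$ after your normalization). The only real difference is that where the paper asserts $(q^k-1)(q^{k-1}-1)(q^{k-2}-1)/(q^3-1)\ge q^{3k-6}$ as "easily verified" for $k\ge 6$, you supply the explicit chain $(1-a)(1-b)(1-c)\ge 1-a-b-c$ together with $q^3\ge q^2+q+1$, which is a welcome filling-in of that gap.
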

Note that the upper bound in (a) is decreasing in $q$ and is less than $2$ for $q\geq3$. 
\begin{proof}
We first prove part (a). Since $\frac{q^n-1}{q^m-1}\geq \frac{q^n}{q^m}$ for any positive integers $n\geq m$,  we have 
\[
\qbin{k}{s}_q=\prod_{i=0}^{s-1}\frac{q^{k-i}-1}{q^{s-i}-1}\geq \prod_{i=0}^{s-1}\frac{q^{k-i}}{q^{s-i}}=q^{s(k-s)}.
\]
For the other inequality, we use
\[
\qbin{k}{s}_q=\prod_{i=0}^{s-1}\frac{q^{k-i}-1}{q^{s-i}-1}\leq \prod_{i=0}^{s-1}\frac{q^{k-i}}{q^{s-i}-1} = q^{s(k-s)}\cdot \prod_{i=1}^s \frac{q^i}{q^i-1}.
\]
Since 
\[
\prod_{i=1}^s \frac{q^i}{q^i-1}\leq \prod_{i=1}^\infty \frac{q^i}{q^i-1}=\frac{q}{q-1}\prod_{i=2}^\infty (1+\frac{1}{q^i-1})\leq \frac{q}{q-1}e^{\sum_{i=2}^\infty\frac{1}{q^i-1}}\leq \frac{q}{q-1}e^{\frac{q}{(q^2-1)(q-1)}},
\]
where we used that $\sum_{i=2}^\infty\frac{1}{q^i-1}\leq \sum_{i=2}^\infty \frac{1}{q^i-q^{i-2}}=\frac{1}{q^2-1}\frac{q}{q-1}$, 
the inequality follows for all prime powers~$q$. 

For part (b), it can be easily verified that  
\[
\frac{(q^k-1)(q^{k-1}-1)(q^{k-2}-1)}{q^3-1}\geq \frac{q^kq^{k-1}q^{k-2}}{q^3}
\]
since $k\geq 6$. It follows that 
\begin{eqnarray*}
\qbin{k}{s}_q&=&\frac{(q^k-1)(q^{k-1}-1)(q^{k-2}-1)}{(q^3-1)(q^2-1)(q-1)}\cdot \frac{q^{k-3}-1}{q^s-1}\cdots \frac{q^{k-s+1}-1}{q^4-1}\\
&\geq& \frac{q^3}{(q^2-1)(q-1)}q^{3k-9}\cdot q^{(k-s-3)(s-3)}\\
&=& \frac{q^3}{(q^2-1)(q-1)}q^{(k-s)s},
\end{eqnarray*}
where in the inequality, we used that $k-3\geq s$.
\end{proof}

\begin{lemma}\label{lem:count_hyp}
Let $H$ be an affine hyperplane in $\mathbb{F}_q^k$ that does not pass through the origin. 
The number of $i$-dimensional affine subspaces disjoint from $H$ and passing through the origin is equal to 
$\qbin{k - 1}{i}_q$. 
\end{lemma}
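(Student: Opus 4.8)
The plan is to set up affine coordinates adapted to $H$ and then reduce the count to a purely linear-algebraic one over the directions of subspaces through the origin. Since $H$ is an affine hyperplane not through the origin, write $H = \{x \in \mathbb{F}_q^k : \langle a, x\rangle = 1\}$ for some nonzero linear functional $a$; let $V_0 = \{x : \langle a, x \rangle = 0\}$ be the unique $(k-1)$-dimensional linear subspace parallel to $H$. An affine $i$-subspace $A$ through the origin is just an $i$-dimensional \emph{linear} subspace $W \le \mathbb{F}_q^k$. I would first show the key dichotomy: such a $W$ is disjoint from $H$ if and only if $W \subseteq V_0$. Indeed, if $W \not\subseteq V_0$ then the restriction of the functional $a$ to $W$ is surjective onto $\mathbb{F}_q$, so there is $w \in W$ with $\langle a, w\rangle = 1$, i.e. $w \in W \cap H$; conversely if $W \subseteq V_0$ then $\langle a, x\rangle = 0 \neq 1$ for all $x \in W$, so $W \cap H = \varnothing$. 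This is the one genuine step; everything else is bookkeeping.

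Given the dichotomy, the $i$-dimensional linear subspaces through the origin that are disjoint from $H$ are exactly the $i$-dimensional linear subspaces of $V_0$. Since $V_0 \cong \mathbb{F}_q^{k-1}$, the number of these is the number of $i$-dimensional subspaces of an ambient $(k-1)$-dimensional vector space, which by Lemma~\ref{lem:sub_count}(a) (applied with parameters $k-1$ and $i$, identifying $i$-dimensional linear subspaces with $(i-1)$-dimensional projective subspaces) equals $\qbin{k-1}{i}_q$. That yields the claimed value.

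I do not anticipate a real obstacle here — the only thing to be careful about is the translation between the affine language of the statement ("$i$-dimensional affine subspaces passing through the origin") and the linear language ("$i$-dimensional vector subspaces"), and making sure the surjectivity argument for the functional $a|_W$ is stated correctly (it uses only that $W \not\subseteq \ker a$, together with $\ker a$ having codimension $1$). One alternative, if one prefers to avoid choosing a functional, is to argue projectively: $H$ corresponds, after projective completion, to a hyperplane of $\mathrm{PG}(k-1,q)$ "at infinity" meeting the relevant configuration, but the coordinate argument above is cleaner and self-contained, so that is the route I would take.
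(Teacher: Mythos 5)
Your proof is correct and takes essentially the same route as the paper's: both reduce the count to observing that an $i$-dimensional linear subspace through the origin is disjoint from $H$ exactly when it lies in the unique parallel hyperplane $V_0$ through the origin, and then count $i$-dimensional subspaces of $V_0 \cong \mathbb{F}_q^{k-1}$ via the $q$-binomial coefficient. You spell out the dichotomy (both directions) explicitly with a coordinate functional, whereas the paper states only the necessity direction and leaves the easy converse implicit, but the underlying argument is identical.
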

\begin{proof}
Any such subspace must be contained in the unique hyperplane parallel to $H$ that passes through the origin, and by Lemma~\ref{lem:sub_count} the number of $i$-dimensional vector subspaces of a $(k-1)$-dimensional vector space is equal to $\qbin{k - 1}{i}_q$.
\end{proof}

\begin{mydef}
\label{def:nqks}
Let $H$ be a $(k - s)$-dimensional affine subspace of $\F_q^k$, not passing through origin. We denote by $n_q(k,s)$ the number of $s$-dimensional affine subspaces through the origin that are disjoint from $H$.
\end{mydef}

Note that $n_q(k,s)$ is independent of the particular choice of $H$ since the general linear group acts transitively on $(k-s)$-dimensional not passing through the origin. A formula is given by the following lemma.

\begin{lemma}\label{lem:countnrdisjoint}
We have  
\[ n_q(k,s)=\sum_{i=1}^{s} q^{(s-i)(k-i-s+1)}\qbin{s-1}{i-1}_q\qbin{k-s}{i}_q. \]
\end{lemma}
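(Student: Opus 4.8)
The plan is to pass to the quotient of $\F_q^k$ by the direction space of $H$, which separates the condition ``$V$ is disjoint from $H$'' into two independent subspace-counting problems. Write $H = a + W$, where $W$ is the $(k-s)$-dimensional vector subspace parallel to $H$; since $0 \notin H$ we have $a \notin W$. For an $s$-dimensional vector subspace $V$, one checks that $V \cap H \neq \emptyset$ if and only if $a \in V + W$, so $V$ is disjoint from $H$ exactly when $a \notin V + W$. Let $\pi \colon \F_q^k \to \F_q^k / W$ be the quotient map; then $\F_q^k/W$ has dimension $s$ and $\overline{a} := \pi(a) \neq 0$. Writing $i := \dim(V \cap W)$, we have $\dim \pi(V) = s - i$ and $a \in V+W$ iff $\overline{a} \in \pi(V)$; moreover $\dim(V + W) = k - i$, so disjointness from $H$ forces $i \geq 1$, and $1 \leq i \leq s$ always.

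I would then stratify the count according to $i$ and to the image $\overline{V} := \pi(V)$. Fix an $(s-i)$-dimensional subspace $\overline{V}$ of $\F_q^k/W$ with $\overline{a} \notin \overline{V}$, and let $M := \pi^{-1}(\overline{V})$, a subspace of dimension $k - i$ containing $W$. The $s$-dimensional subspaces $V$ with $\pi(V) = \overline{V}$, $\dim(V \cap W) = i$, and $V$ disjoint from $H$ are precisely the $s$-dimensional $V \subseteq M$ with $V + W = M$ (such $V$ automatically satisfy $\dim(V\cap W) = i$, and since $a \notin M$ they are disjoint from $H$); conversely every $V$ counted by $n_q(k,s)$ arises from a unique such pair $(i, \overline{V})$. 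Hence $n_q(k,s) = \sum_{i=1}^{s} A_i B_i$, where $A_i$ is the number of $(s-i)$-dimensional subspaces of $\F_q^k/W \cong \F_q^s$ avoiding the fixed nonzero vector $\overline{a}$, and $B_i$ is the number of $s$-dimensional subspaces $V$ of $M$ with $V+W = M$ --- a quantity depending only on $\dim M = k-i$ and $\dim W = k-s$, hence the same for every admissible $\overline{V}$.

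It remains to evaluate $A_i$ and $B_i$. For $A_i$: there are $\qbin{s}{i}_q$ subspaces of dimension $s-i$ in $\F_q^s$, of which $\qbin{s-1}{i}_q$ contain a fixed nonzero vector (these correspond to $(s-i-1)$-dimensional subspaces of the quotient by that vector), so $A_i = \qbin{s}{i}_q - \qbin{s-1}{i}_q = q^{s-i}\qbin{s-1}{i-1}_q$ by the $q$-Pascal identity $\qbin{s}{i}_q = \qbin{s-1}{i}_q + q^{s-i}\qbin{s-1}{i-1}_q$. For $B_i$: setting $U := V \cap W$, an $i$-dimensional subspace of $W$ (there are $\qbin{k-s}{i}_q$ choices), the subspace $V/U$ must be a complement of $W/U$ inside $M/U$; since the number of complements of a $d$-dimensional subspace in an $n$-dimensional space is $q^{d(n-d)}$, with $d = \dim(W/U) = (k-s)-i$ and $n - d = s-i$ we get $B_i = \qbin{k-s}{i}_q\, q^{(s-i)(k-s-i)}$. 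Multiplying, $A_i B_i = q^{(s-i)(k-i-s+1)}\qbin{s-1}{i-1}_q\qbin{k-s}{i}_q$, and summing over $i$ gives the claimed formula. The only genuinely delicate point is verifying that the stratification by $(i,\overline{V})$ is a bijection onto the set being counted; the two subspace counts $A_i$ and $B_i$, and the final arithmetic with $q$-binomials, are routine.
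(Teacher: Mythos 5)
Your proof is correct, and it reaches the same stratification index $i=\dim(V\cap W)$ as the paper, but organizes each stratum's count along a genuinely different route. The paper works with the $(k-s+1)$-dimensional space $H' = \langle H, \vec{0}\rangle$ and stratifies by $T = S\cap H'$ (which, for $S$ disjoint from $H$, coincides with your $U = V\cap W$); it then counts the $S$ above a fixed $T$ directly, by choosing an ordered set of $s-i$ basis vectors outside $H'$ and dividing out by the overcount. You instead pass to the quotient $\F_q^k/W$, first count the possible images $\overline{V}=\pi(V)$ avoiding $\overline{a}$ via the $q$-Pascal identity, then count the $V$ above a fixed $\overline{V}$ by choosing $U=V\cap W$ and invoking the $q^{d(n-d)}$ complement count. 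Concretely, the paper factors $|\mathcal{S}_i|$ as $\qbin{k-s}{i}_q \cdot q^{(s-i)(k-i-s+1)}\qbin{s-1}{i-1}_q$, whereas you factor it as $\bigl(q^{s-i}\qbin{s-1}{i-1}_q\bigr)\cdot\bigl(\qbin{k-s}{i}_q\,q^{(s-i)(k-s-i)}\bigr)$; the products agree but the intermediate quantities have different geometric meanings. Your approach trades the paper's single direct basis count for two named facts (q-Pascal, complement count in a quotient), which makes each factor self-explanatory, at the cost of an extra reduction step (the passage to $\F_q^k/W$ and the observation that $B_i$ depends only on $i$). Both are routine double-counting arguments and both are correct.
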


\begin{proof}
Let $H$ be a $(k - s)$-dimensional affine subspace of $\F_q^k$ not passing through origin and let $H'$ be the $(k - s + 1)$-dimensional subspace spanned by $H$ and the origin.
For $1 \leq i \leq s$, let $\mathcal{S}_i$ denote the set of $s$-dimensional subspaces through the origin, disjoint from $H$, that intersect $H'$ in an $i$-dimensional subspace.
As $\mathcal{S}_1 \cup \cdots \cup \mathcal{S}_s$ is a partition of the set of subspaces that we need to count, we have
$n_q(k, s) = \sum_{i = 1}^s |\mathcal{S}_i|$. 
Therefore, it suffices to show that 
$|\mathcal{S}_i| = q^{(s-i)(k-i-s+1)}\qbin{s-1}{i-1}_q\qbin{k-s}{i}_q$.
The number of ways of picking an $i$-dimensional subspace in $H'$ that is disjoint from $H$ is equal to 
$\qbin{k - s}{i}_q$ by Lemma~\ref{lem:count_hyp}. 
Once we have picked such a subspace $T$, the number of ways of picking an $s$-dimensional subspace $S$ such that $S \cap H' = T$ amounts to picking $s - i$ remaining basis vectors outside $H'$, which gives us exactly 
\[\frac{(q^{k} - q^{k - s + 1}) \cdots (q^k - q^{k - i})}{(q^s - q^i) \cdots (q^s - q^{s - 1})} =q^{(s-i)(k-i-s+1)}\qbin{s-1}{i-1}_q
\]
choices for $S$. 
\end{proof}

\subsection{Error-correcting codes}

We now recall some definitions and results from coding theory (see~\cite{coding_theory_book} for a standard reference). 

\begin{mydef}
 The support of a vector $v\in \Fq^n$ is the set 
 \[ 
 \supp(v):=\{ i \, : \, v_i\neq 0\} \subseteq [n].
 \]
 The Hamming weight of $v$ is
 \[
 \wt(v):=|\supp(v)|.
 \]
\end{mydef}

The Hamming weight induces a metric on $\Fq^n$, given by $d(u,v):=\wt(u-v)$, which is known as the \textit{Hamming distance}. 

\begin{mydef}
An $[n,k,d]_q$ code $C$ is a $k$-dimensional subspace of $\Fq^n$, with \textit{minimum distance}
\[
d:=\min\{\wt(v) \, : \, v \in C\setminus\{\vec{0}\}\}.
\] 
The elements of $C$ are called \textit{codewords}. 
A \textit{generator matrix} for $C$ is a matrix $G\in \Fq^{k\times n}$ such that 
\[
C=\{uG\,:\, u \in \Fq^k\}.
\]
The rate of $C$ is equal to $k/n$ and the relative Hamming distance of $C$ is equal to $d/n$. 
\end{mydef}

\begin{mydef}
Let $\{n_i \}_{i \geq 1}$ be an increasing sequence of lengths and suppose that there exist sequences $\{k_i\}_{i \geq 1}$ and $\{d_i\}_{i \geq 1}$ such that
for all $i \geq 1$ there exists an $[n_i, k_i, d_i]_q$ code $C_i$. 
Then the sequence $\C = \{C_i\}_{i \geq 1}$ is called a family of codes. 
The rate of $\C$ is defined as 
\[R(\C) = \lim_{i \rightarrow \infty} \frac{k_i}{n_i},\]
and the relative distance of $\C$ is defined as 
\[\delta(\C) = \lim_{i \rightarrow \infty} \frac{d_i}{n_i},\]
assuming that these limits exist.
\end{mydef}

A family $\C$ for which $R(\C) > 0$ and $\delta(\C) > 0$, is known as an \textit{asymptotically good code}, and various explicit constructions of such codes are known~\cite{coding_theory_book, tsfasman2013algebraic}. However, the problem of understanding the optimal trade-off between the rate and relative distance of a family of codes is in general not well understood. As a result, the following open question lies at the heart of the subject:
\begin{quote}
What is the \textit{largest} rate that can be achieved for a family of codes of a given relative distance $\delta$?
\end{quote}
For our purposes, we will focus on upper bounds on the rate. To continue, let us define the following two functions.
The $q$-ary entropy is defined by:
\[
\ent_q(x) := x \log_q(q-1) - x\log_q x - (1-x)\log_q(1-x).
\]
Define
\[
M_q(\delta) := \ent_q\left(\frac{1}{q}\left(q - 1 - (q-2)\delta - 2\sqrt{(q-1)\delta(1 - \delta)}\right)\right).
\]
A relevant property of $M_q$ is that it provides an asymptotic upper bound on the rate of any code with a given relative distance. 

\begin{thm}[MRRW bound for $q$-ary codes \cite{Aaltonen79, MRRW77}]\label{thm:MRRW}
For any fixed alphabet size $q$, and relative distance $\delta > 0$, any family of $q$-ary codes with relative distance $\delta$ and rate $R$ satisfies
\[
R \leq M_q(\delta).
\]
\end{thm}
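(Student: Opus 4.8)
The plan is to derive Theorem~\ref{thm:MRRW} from Delsarte's linear programming bound in the Hamming association scheme $H(n,q)$, using the classical McEliece--Rodemich--Rumsey--Welch choice of dual feasible solution built from Krawtchouk polynomials, in the $q$-ary form due to Aaltonen. I would present the argument in three steps, and I expect Step~2 to be the real obstacle.

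\textbf{Step 1: the linear programming bound.} Recall the Krawtchouk polynomials $K_k(x)=\sum_{j=0}^{k}(-1)^j(q-1)^{k-j}\binom{x}{j}\binom{n-x}{k-j}$, which diagonalize $H(n,q)$. Given a code $C\subseteq\F_q^n$ of minimum distance $d$, its normalised distance distribution $(A_0,\dots,A_n)$ satisfies $A_0=1$, $A_i\ge 0$, $A_i=0$ for $1\le i\le d-1$, and the Delsarte/MacWilliams inequalities $\sum_{i=0}^n A_iK_k(i)\ge 0$ for every $k$. Linear programming duality then yields the dual bound: if $F(x)=\sum_{k=0}^n f_kK_k(x)$ satisfies $f_0>0$, $f_k\ge 0$ for all $k\ge 1$, and $F(i)\le 0$ for all $d\le i\le n$, then $|C|\le F(0)/f_0$. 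This part is standard and I would include it only for completeness.

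\textbf{Step 2: the dual witness.} For a real parameter $a$, set
\[
F(x)=\frac{1}{a-x}\Bigl(K_t(a)K_{t+1}(x)-K_{t+1}(a)K_t(x)\Bigr)^2 .
\]
The three-term recurrence for the $K_k$ shows that $a-x$ divides the squared bracket, so $F$ is a polynomial of degree $2t+1$. Choosing $a$ so that the largest root of $K_{t+1}$ lies just below $d$ forces $F(i)\le 0$ for $d\le i\le n$, since the squared factor is nonnegative while $a-i<0$ there. The delicate point is nonnegativity of the Krawtchouk coefficients $f_k$: one rewrites $F$, via the Christoffel--Darboux formula, as a positive multiple of a squared Christoffel--Darboux kernel $\sum_{j\le t}K_j(a)K_j(x)/\bigl((q-1)^j\binom{n}{j}\bigr)$, expands the resulting products $K_iK_j$ in the Krawtchouk basis using the structure constants of $H(n,q)$ (which are all nonnegative), and combines this with the sign of $a-i$ on $\{0,\dots,n\}$ to conclude $f_k\ge 0$ and $f_0>0$. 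This positivity verification is the technical heart of the proof; if the scheme-theoretic route becomes unwieldy I would fall back on the Fourier-analytic argument of Navon--Samorodnitsky, which packages the same information more cleanly.

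\textbf{Step 3: estimate and asymptotics.} Bound $F(0)/f_0$ using $K_k(0)=(q-1)^k\binom{n}{k}$ together with standard estimates for Krawtchouk values, obtaining $\tfrac1n\log_q\!\bigl(F(0)/f_0\bigr)\le \ent_q(t/n)+o(1)$. Then invoke the asymptotic location of the largest root of $K_{\ell}$ with $\ell=\tau n$, namely $n\bigl(\tfrac1q(q-1-(q-2)\tau-2\sqrt{(q-1)\tau(1-\tau)})+o(1)\bigr)$, which follows from the recurrence via a Sturm-sequence (or saddle-point) analysis. For a family of $q$-ary codes of relative distance $\delta$, choose $t$ with $t/n\to\tau$, where $\tau=\tfrac1q\bigl(q-1-(q-2)\delta-2\sqrt{(q-1)\delta(1-\delta)}\bigr)$, so that the largest root of $K_{t+1}$ is asymptotically $\delta n\le d$ and Step~2 applies; this yields $R\le \ent_q(\tau)=M_q(\delta)$, as claimed. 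The main obstacles are thus the coefficient-positivity argument in Step~2 and the precise largest-root asymptotics in Step~3; everything else is bookkeeping.
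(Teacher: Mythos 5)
This theorem is quoted in the paper as a known result from \cite{Aaltonen79, MRRW77} and is used as a black box in the proof of Theorem~\ref{thm:strong_lower}; the paper contains no proof of it, so there is nothing internal to compare your sketch against.

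Your outline is the standard MRRW/Aaltonen linear-programming argument and is essentially correct as a roadmap. Two remarks. First, a factual slip: in Steps~2 and~3 you twice refer to the \emph{largest} root of $K_{t+1}$, but the asymptotic location you quote, $n\bigl(\tfrac{1}{q}(q-1-(q-2)\tau-2\sqrt{(q-1)\tau(1-\tau)})+o(1)\bigr)$, is that of the \emph{smallest} root; this is where $a$ must be placed (just below $d$), since $\delta<1-1/q$ keeps $d/n$ in the left tail of the Krawtchouk spectrum. Second, you correctly flag coefficient-positivity in Step~2 as the real obstacle, but the description ``expand the squared Christoffel--Darboux kernel in the Krawtchouk basis and combine with the sign of $a-i$'' glosses over the fact that the linear factor $a-x$ itself has a negative constant Krawtchouk coefficient in the relevant regime $a\approx\tau n<\tfrac{q-1}{q}n$, so the nonnegativity of the $f_k$ does not follow termwise and requires the more careful organization in MRRW via interlacing of Krawtchouk roots (placing $a$ between the smallest roots of $K_t$ and $K_{t+1}$), or the Fourier-analytic packaging of Navon--Samorodnitsky that you propose as a fallback. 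As a plan for proving a cited classical theorem, this is sound; it simply is not a proof the paper itself attempts.
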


One may verify that $M_q$ is a continuous and strictly decreasing function in the domain $[0, 1 - 1/q]$, satisfying $M_q(0) = 1$ and $M_q(1 - 1/q) = 0$, which can be used to show that the constant $c_q$ appearing in Theorem~\ref{thm:strong_lower} is well defined.
Note that $c_q$ is also equal to the maximum $x$ for which $M_q((q - 1)/(x(q+1)) \leq 1/(x(q+1))$ and $x \geq 1$ (see Appendix~\ref{sec:appendix} for further details).

\subsection{Strong blocking sets}

\begin{mydef}
 Let $C$ be an $[n,k,d]_q$ code. A nonzero codeword $v \in C$ is said to be \textit{minimal} if $\supp(v)$ is minimal with respect to inclusion in the set 
 \[
 \{ \supp(c) \, : \, u \in C\setminus\{\vec{0}\}\}.
 \]
 The code $C$ is a \textit{minimal (linear) code} if all its nonzero codewords are minimal.
\end{mydef}

So a linear code is minimal if for all $u,v\in C$ we have: $\supp(u)\subsetneq \supp(v) \implies u=0$.

\begin{mydef}
A set $S$ of points in a projective space is called a strong blocking set if for every hyperplane $H$, we have
$\langle S \cap H \rangle = H$. 
\end{mydef}

These special kinds of (projective) blocking sets have been studied under the names of generator sets~\cite{FS2014} and cutting blocking sets~\cite{ABNR2022}, but we adopt the terminology of strong blocking sets used in the earliest work~\cite{davydov2011linear}.

\begin{mydef}
An $[n,k,d]_q$ code $C$ is nondegenerate if there is no coordinate where every codeword has $0$ entry. 
\end{mydef}

The following is a standard equivalence between nondegenerate codes and certain sets of points in the projective space (see for example~\cite[Theorem 1.1.6]{tsfasman2013algebraic}).

\begin{lemma}\label{lem:intersection}
Let $C$ be a nondegenerate $[n,k,d]_q$ code and let $G=(g_1 \mid \ldots \mid g_n) \in \Fq^{k\times n}$ be any of its generator matrices. 
Let $\mathcal{P} = \{[g_1], \ldots, [g_n]\}$ be the (multi-)set of points in $\mathrm{PG}(k - 1, q)$ given by~$G$. 
Then 
\[d = n - \max_H \{|\{i : [g_i] \in H\} |\},\]
where the maximum is taken over all hyperplanes $H$.
Conversely for $d > 0$, any set of $n$ points in $\mathrm{PG}(k - 1, q)$ whose maximum intersection with a hyperplane has size $n - d$ gives rise to a nondegenrate $[n, k, d]_q$ code by taking a generator matrix whose columns are the coordinates of these $n$ points. 
\end{lemma}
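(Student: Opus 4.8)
The statement to prove is the standard correspondence (Lemma~\ref{lem:intersection}) between nondegenerate $[n,k,d]_q$ codes and multisets of $n$ points in $\mathrm{PG}(k-1,q)$, relating the minimum distance to the maximum hyperplane intersection. Let me sketch the proof.

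\textbf{Plan.}
Fix a nondegenerate $[n,k,d]_q$ code $C$ with generator matrix $G=(g_1\mid\cdots\mid g_n)$. The plan is to translate weights of codewords into hyperplane intersection numbers via the bijection between codewords $uG$ (for $u\in\Fq^k$) and linear functionals $x\mapsto u\cdot x$ on $\Fq^k$. First I would observe that a codeword $uG$ has a zero in coordinate $i$ precisely when $u\cdot g_i=0$, i.e. when the point $[g_i]$ lies on the hyperplane $H_u=\{[x]:u\cdot x=0\}$ (this is well-defined on projective points since scaling $g_i$ doesn't change whether $u\cdot g_i=0$). Hence $\wt(uG)=n-|H_u\cap\mathcal P|$, counting the multiset $\mathcal P=\{[g_1],\dots,[g_n]\}$ with multiplicity. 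Here nondegeneracy of $C$ is exactly what guarantees no $g_i$ is the zero vector, so each column genuinely gives a projective point, and it also ensures the $H_u$ as $u$ ranges over nonzero vectors are genuine hyperplanes.

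\textbf{Key steps.}
(1) Note that $u\mapsto H_u$ is a surjection from $\Fq^k\setminus\{0\}$ onto the set of hyperplanes of $\mathrm{PG}(k-1,q)$, with $H_u=H_{u'}$ iff $u'=\lambda u$; so as $u$ ranges over all nonzero vectors, $H_u$ ranges over all hyperplanes (each hit $q-1$ times). (2) Since $C$ has dimension $k$, the map $u\mapsto uG$ is injective, so nonzero codewords are in bijection with nonzero $u$. (3) Combine: $d=\min_{u\neq 0}\wt(uG)=\min_{u\neq 0}\bigl(n-|H_u\cap\mathcal P|\bigr)=n-\max_{H}|H\cap\mathcal P|$, where the last max is over all hyperplanes. (4) For the converse, given $n$ points in $\mathrm{PG}(k-1,q)$ not all contained in a hyperplane (equivalently, spanning the whole space — this is forced if their maximum hyperplane intersection is $n-d<n$), pick representative vectors as columns of a matrix $G$; the rank is $k$ because the points span, so $G$ generates an $[n,k,\cdot]_q$ code, it is nondegenerate because no coordinate is identically zero (that would force... actually one should check: a zero coordinate corresponds to the zero vector being a column, which is not a projective point, so this cannot happen), and by the forward direction its minimum distance is $n-(n-d)=d$.

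\textbf{Main obstacle.}
There is no real obstacle here — the argument is routine bookkeeping. The only points requiring care are: making sure the reduction from "codeword coordinates" to "projective incidences" respects multiplicities correctly (repeated columns give repeated points, and the hyperplane intersection must be counted with the same multiplicities); and, in the converse, correctly arguing that the spanning condition is equivalent to $\mathrm{rank}(G)=k$ and that it is implied by the hypothesis on the maximum hyperplane intersection being strictly less than $n$. I would present the forward direction cleanly via the identity $\wt(uG)=n-|H_u\cap\mathcal P|$ and then get both the distance formula and the converse as immediate consequences.
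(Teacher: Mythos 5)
Your proof is correct and is the standard argument; the paper does not supply its own proof of Lemma~\ref{lem:intersection} but simply cites it as a well-known fact (from Tsfasman--Vl\u{a}du\c{t}--Nogin), and your identity $\wt(uG)=n-|H_u\cap\mathcal P|$ together with the bijection between nonzero $u$ up to scalar and hyperplanes is exactly the textbook derivation being invoked.
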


In particular, Lemma~\ref{lem:intersection} implies that if a set of $n$ points in $\mathrm{PG}(k - 1, q)$ meets every hyperplane in at most $m$ points, then there exists an $[n, k, n - m]_q$ code. 

Under this correspondence, minimality of an $[n, k, d]_q$ code has been shown to be equivalent to the corresponding point set giving rise to a strong blocking set in $\mathrm{PG}(k - 1, q)$. 

\begin{thm}[see \cite{ABNR2022}, \cite{tang2021full}]
\label{thm:equivalence_minimal_strong}
 Let $C$ be a nondegenerate $[n,k,d]_q$ code and let $G=(g_1 \mid \ldots \mid g_n) \in \Fq^{k\times n}$ be any of its generator matrices. The following are equivalent:
 \begin{itemize}
     \item[\textup{(i)}] $C$ is a minimal code;
     \item[\textup{(ii)}] The set $\{[ g_1] ,\ldots, [g_n ]\}$ is a strong blocking set in $\mathrm{PG}(k-1,q)$.
 \end{itemize}
\end{thm}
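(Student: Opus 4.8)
The plan is to build the standard correspondence between nonzero codewords of $C$ and hyperplanes of $\mathrm{PG}(k-1,q)$, and then reduce the theorem to a single pointwise statement: a nonzero codeword is minimal exactly when the associated hyperplane is spanned by its points of $\mathcal{P} := \{[g_1],\dots,[g_n]\}$. First I would record the easy bookkeeping: since $C$ is nondegenerate every column $g_i$ is nonzero, so each $[g_i]$ is a genuine point; since $G$ has rank $k$, the map $u \mapsto uG$ is a bijection $\F_q^k \to C$; and scalar multiples of a codeword have the same support, so minimality of a codeword depends only on $u$ up to scalars. For nonzero $u$ write $H_u = \{[x] : u\cdot x = 0\}$, a hyperplane, and note that the $i$-th coordinate of $uG$ is $u\cdot g_i$, so $\supp(uG)=\{i : [g_i]\notin H_u\}$. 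This yields the translation
\[
\supp(vG)\subseteq\supp(uG)\iff \mathcal P\cap H_u\subseteq H_v ,
\]
valid for all nonzero $u,v$, and the map $u\mapsto H_u$ is a bijection between nonzero vectors modulo scalars and hyperplanes of $\mathrm{PG}(k-1,q)$. Hence the theorem follows once I show: for every nonzero $u$, $uG$ is minimal if and only if $\langle \mathcal P\cap H_u\rangle = H_u$.

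For the ``if'' direction I would take $v\neq 0$ with $\supp(vG)\subseteq\supp(uG)$; the translation gives $\mathcal P\cap H_u\subseteq H_v$, hence $H_u=\langle\mathcal P\cap H_u\rangle\subseteq H_v$, which forces $H_u=H_v$ (two hyperplanes, one contained in the other), so $v$ is a scalar multiple of $u$ and the supports coincide; thus $uG$ is minimal. For the ``only if'' direction I would argue the contrapositive: assuming $W:=\langle\mathcal P\cap H_u\rangle\subsetneq H_u$, the vector space $W$ has dimension at most $k-2$; because $uG\neq 0$ there is a point $[g_j]\in\mathcal P$ with $[g_j]\notin H_u$, in particular $[g_j]\notin W$, so $\langle W,[g_j]\rangle$ has dimension at most $k-1$ and lies in some hyperplane $H_v$ with $v\neq 0$. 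Then $H_v\neq H_u$ (since $[g_j]\in H_v\setminus H_u$), and $\mathcal P\cap H_u\subseteq W\subseteq H_v$ while $[g_j]\in(\mathcal P\cap H_v)\setminus(\mathcal P\cap H_u)$; by the translation this gives $\supp(vG)\subsetneq\supp(uG)$ with $vG\neq 0$, so $uG$ is not minimal. Finally I would assemble: $C$ is minimal iff every nonzero $uG$ is minimal iff $\langle\mathcal P\cap H_u\rangle=H_u$ for all nonzero $u$ iff $\langle\mathcal P\cap H\rangle=H$ for every hyperplane $H$, i.e.\ $\mathcal P$ is a strong blocking set.

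The step I expect to be the only real subtlety is the ``only if'' direction: it is tempting to simply pick any hyperplane through $W$ other than $H_u$ (there are several, since $W$ has codimension at least $2$), but such a hyperplane need not meet $\mathcal P$ in strictly more points than $H_u$ does, so it would yield only $\supp(vG)\subseteq\supp(uG)$ rather than a strict containment. The fix is to force a witness point $[g_j]\in\mathcal P\setminus H_u$ into the chosen hyperplane, which is possible precisely because $\dim W\le k-2$ leaves room to adjoin one more point and still remain inside a hyperplane. Everything else is routine linear algebra together with the dictionary between linear codes and projective point sets recalled in Lemma~\ref{lem:intersection}.
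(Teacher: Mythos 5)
The paper does not actually prove Theorem~\ref{thm:equivalence_minimal_strong}; it imports it as a known result citing~\cite{ABNR2022} and~\cite{tang2021full}, so there is no in-paper argument to compare your proposal against. Your proof is correct and is the standard route: using the rank-$k$ and nondegeneracy hypotheses to set up the bijection $u\mapsto H_u$ between nonzero vectors up to scalar and hyperplanes, translating $\supp(vG)\subseteq\supp(uG)$ into $\mathcal P\cap H_u\subseteq H_v$, and then reducing to the pointwise claim that $uG$ is minimal iff $\langle\mathcal P\cap H_u\rangle=H_u$. You correctly identify the one place where carelessness would break the argument --- in the contrapositive of ``only if,'' picking an arbitrary hyperplane through $W=\langle\mathcal P\cap H_u\rangle$ other than $H_u$ only yields $\supp(vG)\subseteq\supp(uG)$, which does not contradict minimality; adjoining a witness column $[g_j]\in\mathcal P\setminus H_u$ (which exists since $uG\neq 0$, and fits because $\dim W\le k-2$) forces the strict containment. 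The remaining details (injectivity of $u\mapsto uG$ ensuring $vG\neq 0$, nondegeneracy ensuring each $[g_i]$ is a genuine point, and the harmlessness of repeated columns since the inclusion of supports only depends on the underlying projective point set) are all handled. No gaps.
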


\begin{remark}\label{rem:strong_distance}
    Note that the Hamming distance $d$ plays no role in the definition of minimal codes. 
    However, it can be shown that if an $[n, k, d]_q$ code is minimal, then $d \geq (q - 1)(k - 1) + 1$ (see~\cite[Theorem 2.8]{ABNR2022}).
    This implies that any minimal $[n, k, d]_q$ code where $k$ is a linear function of $n$ is asymptotically good, which is another motivation for studying these codes. 
\end{remark}

The main problem is to find small strong blocking sets in $\mathrm{PG}(k - 1, q)$, which is then equivalent to finding short minimal codes of dimension $k$. 
The following is the best lower bound on the size of a strong blocking set, for all $q \geq 3$, and it was proved using the polynomial method.

\begin{thm}[see Theorem 2.14 in \cite{ABNR2022}]
\label{thm:lower_bound_strong}
 Let $S \subseteq \mathrm{PG}(k-1,q)$ be a strong blocking set. Then
 \[
 |S|\geq (q+1)(k-1).
 \]
\end{thm}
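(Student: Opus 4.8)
The plan is to exploit the equivalence between strong blocking sets and minimal codes (Theorem~\ref{thm:equivalence_minimal_strong}), reducing the geometric statement to a statement about the length of a minimal code of dimension $k$. So let $S \subseteq \mathrm{PG}(k-1,q)$ be a strong blocking set of size $n$, and let $C$ be the corresponding nondegenerate $[n,k,d]_q$ minimal code with generator matrix $G = (g_1 \mid \cdots \mid g_n)$ whose columns represent the points of $S$. The goal is to show $n \geq (q+1)(k-1)$.

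First I would record the key structural fact about minimal codes that makes the polynomial method applicable: in a minimal code, no nonzero codeword has support strictly contained in the support of another. Equivalently, for every pair of codewords $u, v$ with $v \neq 0$, there is a coordinate where $v$ vanishes but $u$ does not, unless $u$ is a scalar multiple of $v$. The natural way to turn this into a lower bound on $n$ is a counting/dimension argument over the coordinates. Fix a nonzero codeword $v$ of minimal weight; I would look at the punctured code on the complement $[n] \setminus \supp(v)$, or better, iterate: the minimality condition forces a large "spread" of supports, and one leverages the lower bound $d \geq (q-1)(k-1)+1$ on the minimum distance of a minimal code (the fact quoted in the Remark, itself a consequence of the polynomial method / Alon–Füredi type reasoning). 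Given every nonzero codeword has weight at least $(q-1)(k-1)+1$, one considers a suitable averaging over the $q^k - 1$ nonzero codewords, or over a cleverly chosen subcode, to bound $n$ from below.

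Concretely, the approach I would take is: pick a hyperplane $H$ of $\mathrm{PG}(k-1,q)$, so $\langle S \cap H\rangle = H$ means $S \cap H$ contains a projective basis of $H$, i.e.\ at least $k-1$ points, but we need the stronger count $(q+1)(k-1)$. The factor $q+1$ suggests pairing up the analysis with the $q+1$ points on a line. I would choose a line $\ell$ of $\mathrm{PG}(k-1,q)$ and consider the $q+1$ hyperplanes through a fixed codimension-$2$ subspace, or dually, fix a point $P \in S$ and a complementary hyperplane and project. The cleanest route is probably the one in \cite{ABNR2022}: take $k-1$ points of $S$ spanning a hyperplane $H$, then argue that for the points of $S$ off $H$, together with a Jamison/Brouwer–Schrijver-type bound applied to the affine space $\mathrm{PG}(k-1,q)\setminus H \cong \mathrm{AG}(k-1,q)$, one needs at least $(q-1)(k-1)+1$ additional points, and then iterate/combine the $k-1$ "on $H$" points with the $(q-1)(k-1)$ extra points... but the bookkeeping to get exactly $(q+1)(k-1)$ rather than something weaker is the delicate part.

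The main obstacle I anticipate is extracting the precise constant $q+1$: a naive dimension count only gives $n \geq 2(k-1)$ or so, and one really needs the affine blocking set lower bound $b_q(k-1,1) \geq (q-1)(k-1)+1$ of Jamison/Brouwer–Schrijver applied in the right place, combined with a clean argument that the strong blocking condition forces, for *each* hyperplane, both a spanning set inside it and enough points outside. I would expect the proof to fix a hyperplane $H_0$ with $|S \cap H_0|$ minimal, show $S \setminus H_0$ must be a $1$-blocking set in the affine space $\mathrm{PG}(k-1,q) \setminus H_0$ (this uses that every affine hyperplane of that affine space extends to a projective hyperplane meeting $S$ in a spanning set, hence not contained in $H_0$), giving $|S \setminus H_0| \geq (q-1)(k-1)+1$; then separately bound $|S \cap H_0| \geq 2(k-1)-1$ or induct on dimension to handle the points inside $H_0$, and add the two bounds. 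Tuning these two contributions so they sum to exactly $(q+1)(k-1)$ is where the real work lies, and I'd want to double-check the base case $k=2$ (where a strong blocking set in $\mathrm{PG}(1,q)$ must be all $q+1$ points, matching $(q+1)(k-1) = q+1$) to calibrate the induction.
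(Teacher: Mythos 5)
Your approach stalls exactly where you flag, and the bound you can actually extract falls short of the target. You correctly observe that for any hyperplane $H_0$, the set $S\setminus H_0$ is a $1$-blocking set of the affine space $\mathrm{PG}(k-1,q)\setminus H_0\cong\F_q^{k-1}$: every hyperplane $H\neq H_0$ meets $S$ in a spanning set of $H$, which cannot lie inside $H_0\cap H$, so $S$ hits $H\setminus H_0$. By Jamison/Brouwer--Schrijver this gives $|S\setminus H_0|\geq (q-1)(k-1)+1$. But for $S\cap H_0$ the strong blocking condition only tells you that it spans $H_0$, hence $|S\cap H_0|\geq k-1$; you cannot get $2(k-1)-1$ from spanning alone, and $S\cap H_0$ need not be a strong blocking set of $H_0$, so there is no clean induction either. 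Adding the two pieces yields $|S|\geq q(k-1)+1$, which is strictly weaker than $(q+1)(k-1)$ for all $k\geq 3$. The route through Theorem~\ref{thm:equivalence_minimal_strong} and $d\geq(q-1)(k-1)+1$ suffers from the same issue: knowing the minimum distance bounds the weight of one codeword, not the length $n$, and you give no mechanism to convert it.

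The argument the paper has in mind is a one-liner via Lemma~\ref{lem:equivalence_strong_affine}, which you did not invoke. A strong blocking set $\mathcal{L}$ of size $n$ in $\mathrm{PG}(k-1,q)$ corresponds to the affine $2$-blocking set $B=\bigcup_{\ell\in\mathcal{L}}\ell\subseteq\F_q^k$, a union of $n$ lines through the origin, so $|B|=(q-1)n+1$. Plugging into the affine lower bound~\eqref{eq:blocking_lb_basic} with $s=2$, namely $b_q(k,2)\geq(q^2-1)(k-1)+1$, gives $(q-1)n+1\geq(q^2-1)(k-1)+1$, i.e.\ $n\geq(q+1)(k-1)$. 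Your hyperplane split is essentially a one-sided version of the Jamison argument underlying~\eqref{eq:blocking_lb_basic}: by working only with $S\setminus H_0$ you exploit blocking outside a single hyperplane rather than the full codimension-$2$ blocking property carried by $B$, and that is exactly where the missing $(k-1)$ points are lost.
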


\begin{remark}
    This is in general not tight. For $q = 2$, a better bound follows from~\cite{katona1983minimal} and Lemma~\ref{lem:equivalence_strong_affine} and for $k = 3$, a better lower bound follows from~\cite{ball1996multiple} as then a strong blocking set equivalent to a $2$-fold blocking set. 
\end{remark}

The following is the best upper bound on the smallest size of a strong blocking set. 
\begin{thm}[see \cite{heger2021short}]
\label{thm:upper_bound_strong}
 The the smallest size of a strong blocking set in $\mathrm{PG}(k-1,q)$ is at most
\[ \begin{cases}\frac{2k-1}{\log_2(4/3)} & \mbox{ if } q=2, \\
  (q+1)\left\lceil\frac{2}{1+\frac{1}{(q+1)^2\ln q}}(k-1)\right\rceil & \mbox{ otherwise. }
  \end{cases}
  \]
\end{thm}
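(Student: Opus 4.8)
The plan is a probabilistic construction, carried out slightly differently for $q=2$ and $q\ge 3$, resting on a convenient reformulation of the strong blocking property. First I would record that a set $S\subseteq \mathrm{PG}(k-1,q)$ is a strong blocking set if and only if $S$ meets $H\setminus\Pi$ for every codimension-$2$ subspace $\Pi$ and every hyperplane $H$ containing $\Pi$: indeed $\langle S\cap H\rangle\subsetneq H$ happens precisely when $S\cap H$ lies in some codimension-$2$ subspace $\Pi\subseteq H$, which is exactly the event that $S$ misses $H\setminus\Pi$. So we must hit every pair $(\Pi,H)$, and there are $\qbin{k}{2}_q(q+1)$ such pairs, which by Lemma~\ref{lem:qbinestimate}(a) is at most $q^{2(k-1)}$ when $q\ge 3$ and at most $2^{2k-1}$ when $q=2$.

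For $q\ge 3$ I would let $S$ be the union of $\ell$ independent, uniformly random lines of $\mathrm{PG}(k-1,q)$, so that $|S|\le (q+1)\ell$. The crux is the probability $p$ that one uniformly random line $L$ misses a fixed set $H\setminus\Pi$. A case analysis on $\dim(L\cap\Pi)$ does this: if $L\subseteq\Pi$ then $L$ misses $H\setminus\Pi$; if $L$ is disjoint from $\Pi$ then $L$ meets each of the $q+1$ hyperplanes through $\Pi$ in a distinct point and so hits $H\setminus\Pi$; and if $L$ meets $\Pi$ in a single point then $L$ lies in the unique hyperplane $\langle L,\Pi\rangle$ through $\Pi$, and it hits $H\setminus\Pi$ exactly when $\langle L,\Pi\rangle=H$. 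Counting the lines in each class with Gaussian coefficients yields an exact formula for $p$ equal to $\tfrac{q^3-q+1}{q^4}+O(q^{-k})$; the inequality to extract is $p\le \tfrac1q\, e^{-1/(q+1)^2}$. This is exactly where one beats the Lov\'asz-type bound~\eqref{eq:blocking_up_basic}: a random line misses a target $H\setminus\Pi$ with probability only about $1/q$, while a random point misses it with probability about $1-1/q$.

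Now a union bound gives $\Pr[S\text{ is not a strong blocking set}]\le q^{2(k-1)}p^\ell\le q^{2(k-1)-\ell}e^{-\ell/(q+1)^2}$, which drops below $1$ once $\ell>\tfrac{2(k-1)\ln q}{\ln q+1/(q+1)^2}=\tfrac{2(k-1)}{1+1/((q+1)^2\ln q)}$; taking $\ell=\big\lceil\tfrac{2(k-1)}{1+1/((q+1)^2\ln q)}\big\rceil$ produces a strong blocking set of size at most $(q+1)\ell$, which is the stated bound. For $q=2$ the same recipe with random \emph{points} works directly, since a uniformly random point misses a fixed $H\setminus\Pi$ with probability $1-\tfrac{2^{k-2}}{2^k-1}<\tfrac34$: a set of $m$ random points fails to be a strong blocking set with probability at most $2^{2k-1}(3/4)^m$, which is below $1$ once $m>\tfrac{2k-1}{\log_2(4/3)}$.

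The step I expect to be the main obstacle is the exact analysis of $p$ (and of the miss probability for $q=2$): the bound $p\le\tfrac1q e^{-1/(q+1)^2}$ has to hold for \emph{every} admissible $k$, not just asymptotically, so one must either verify that $p$ is monotone in $k$ or dispose of small $k$ by a separate construction; one also has to keep careful track of the ceilings and of the lower-order terms so that the final expressions come out exactly in the form claimed. A minor point is that the random lines are sampled with repetition, but this is harmless, since the union of the distinct lines among them is still a strong blocking set of size at most $(q+1)\ell$. (For $q\ge 3$ and $k$ large this bound is in any case subsumed by Theorem~\ref{thm:strong_upper}, whose proof combines Lemma~\ref{lem:equivalence_strong_affine} with the $s=2$ case of Theorem~\ref{thm:blocking_upper}.)
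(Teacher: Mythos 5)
This theorem is stated in the paper as a citation to~\cite{heger2021short}; the paper itself does not supply a proof. Your reconstruction is essentially the Héger--Nagy probabilistic argument, so the comparison is to that external source rather than to anything in this paper, but I can assess it on its own terms.

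Your reformulation of the strong blocking property as hitting every set $H\setminus\Pi$, your case analysis on $\dim(L\cap\Pi)$, and the resulting exact formula for $p$ are all correct; in fact the quantity you compute is precisely $n_q(k,2)/\qbin{k}{2}_q$, which is bounded by $\tfrac{q^3-q+1}{q^4}$ in the paper's Lemma~\ref{lem:grensqvdm} for $s=2$. Passing from this to $p\le\tfrac1q e^{-1/(q+1)^2}$ reduces to $(q-1)(q+1)^2\ge q^3$, i.e.\ $q^2-q-1\ge 0$, which holds for $q\ge 2$, so that step is fine, and the union bound for $q\ge 3$ then goes through: the target count $\qbin{k}{2}_q(q+1)$ is strictly below $q^{2(k-1)}$ when $q\ge 3$ (this is where $q^2-3q+1\ge 0$ enters), and the strictness absorbs the boundary case when the fraction inside the ceiling happens to be an integer. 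So the $q\ge 3$ half of your proof is sound and gives exactly the stated expression.

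The $q=2$ half has a genuine (though small) gap. Your union bound requires $m>\tfrac{2k-1}{\log_2(4/3)}$, i.e.\ $m\ge\lceil\tfrac{2k-1}{\log_2(4/3)}\rceil$, whereas the theorem asserts the minimum size is at most $\tfrac{2k-1}{\log_2(4/3)}$, which for an integer-valued minimum means at most $\lfloor\tfrac{2k-1}{\log_2(4/3)}\rfloor$ --- one less than what you obtain. The slack you have (strict inequalities $(2^k-1)(2^{k-1}-1)<2^{2k-1}$ and $p<3/4$) is only $O(k\cdot 2^{-k})$ multiplicatively and does not close a gap of a full unit in $m$ for large $k$; a concrete check at $k=10$ shows the pure union bound forces $m=46$ while $\lfloor 19/\log_2(4/3)\rfloor=45$. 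To recover the stated constant you need to do more than a union bound: for instance an alterations argument (choose $m'$ random points, then add one point for each of the at most $\lfloor T p^{m'}\rfloor$ uncovered pairs $(\Pi,H)$, optimizing $m'+Tp^{m'}$), or argue via random linear intersecting codes as in the Komlós-type result the paper's own remark attributes this case to. A minor ancillary point: your appeal to Lemma~\ref{lem:qbinestimate}(a) to get the target count $\le 2^{2k-1}$ when $q=2$ does not quite work (that lemma's constant gives $\approx 8.37\cdot 2^{2k-4}>2^{2k-1}$); you need the direct identity $\qbin{k}{2}_2\cdot 3=(2^k-1)(2^{k-1}-1)<2^{2k-1}$, which you clearly had in mind.
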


\begin{remark}
For $q = 2$, this bound already appears in~\cite{miklos1984linear}, where it is attributed to Koml\'os.
\end{remark}

We now prove the new characterization of strong blocking sets based on affine blocking sets, outlined in Lemma~\ref{lem:equivalence_strong_affine}.
First, we define a generalization of strong blocking sets, which also appears in~\cite[Definition 2]{fancsali2016higgledy} under the name of generator sets. 

\begin{mydef}
    A set of points in $\mathrm{PG}(k - 1, q)$ is called a strong $t$-blocking set if it intersects every codimension-$t$ subspace in a set of points that spans the subspace. 
\end{mydef}

\begin{proof}[Proof of Lemma~\ref{lem:equivalence_strong_affine}]
First assume that $\mathcal{L}$ is a strong $(s-1)$-blocking set in PG$(k-1,q)$. 
Let $V\subseteq \F_q^k$ be a codimension-$s$ vector subspace and let $u\in \F_q^k\setminus V$. It suffices to show that $B \cap (V+u) \neq \emptyset$. 
Let $W = \langle V, u \rangle = \cup_{\lambda \in \mathbb{F}_q} (V + \lambda u)$ be the codimension-$(s-1)$ vector subspace spanned by $V\cup\{u\}$.
Then $W$ meets $\mathcal{L}$ in a spanning set. 
In particular, there exists an element $b \in B$ such that $b \in W \setminus V$. 
Thus we can write $b = v + \lambda u$ for some $v \in V$ and $\lambda \neq 0$. 
Then $b' = \lambda^{-1} b$ is contained in $B$ as $B$ is closed under taking scalar multiples, and $b'$ is contained in $V + u$ as $b' = \lambda^{-1} v + u$ and $\lambda^{-1}v \in V$. 

Now assume that $B$ blocks all codimension-$s$ affine subspaces. 
Say $\mathcal{L}$ is not a strong $(s - 1)$-blocking set. 
Then there is some codimension-$(s-1)$ vector subspace $V$ such that $\mathcal{L} \cap V$ is contained inside a codimension-$s$ vector subspace $V'$ of $V$. 
Then the set $B$ does not block any affine codimension-$s$ subspace contained in $V \setminus V'$ that is parallel to $V'$. 
\end{proof}

\begin{remark}
    By combining Lemma~\ref{lem:equivalence_strong_affine} and \eqref{eq:blocking_lb_basic}, we get a new proof of the lower bound on strong blocking sets given in Theorem~\ref{thm:lower_bound_strong} (see \cite{heger2021short} for another proof that uses \eqref{eq:blocking_lb_basic}). 
\end{remark}

\section{Upper bounds on blocking sets}
\label{sec:upper}

In this section, we give a probabilistic construction of $s$-blockings sets in $\F_q^k$, thus obtaining an upper bound on $b_q(k,s)$.
For $q = 2$, we simply pick random points and show that if the number of points is large enough, then the probability that they form an $s$-blocking set is positive. 
Our novel idea for $q \geq 3$ is to randomly pick $s$-dimensional linear subspaces instead. 
We start with a preliminary lemma that estimates $n_q(k,s)$ as defined in Definition~\ref{def:nqks}

\begin{lemma}\label{lem:grensqvdm} Let $q$ be a prime power. Let $2\leq s\leq k$ be integers. We have the following estimate on $n_q(k,s)$.
\[
n_q(k,s)\leq \frac{q^3-q+1}{q^4}\qbin{k}{s}_q.
\]
\end{lemma}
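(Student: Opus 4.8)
The plan is to estimate the ratio $n_q(k,s)/\qbin{k}{s}_q$ by comparing the formula of Lemma~\ref{lem:countnrdisjoint} with a companion formula for $\qbin{k}{s}_q$ of exactly the same shape.

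The first step is the identity
\[
\qbin{k}{s}_q=\sum_{i=1}^{s} q^{(s-i)(k-i-s+1)}\qbin{s-1}{i-1}_q\qbin{k-s+1}{i}_q ,
\]
which I would prove by repeating the double counting in the proof of Lemma~\ref{lem:countnrdisjoint}: fix a $(k-s+1)$-dimensional vector subspace $H'$ and sort the $s$-dimensional subspaces $S$ through the origin by $i=\dim(S\cap H')\in\{1,\dots,s\}$; there are $\qbin{k-s+1}{i}_q$ choices for $S\cap H'$ (now every $i$-subspace of $H'$, without any avoidance condition) and, by the computation carried out in that proof, $q^{(s-i)(k-i-s+1)}\qbin{s-1}{i-1}_q$ ways to extend to such an $S$. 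Writing $w_i:=q^{(s-i)(k-i-s+1)}\qbin{s-1}{i-1}_q\qbin{k-s+1}{i}_q$ and
\[
r_i:=\frac{\qbin{k-s}{i}_q}{\qbin{k-s+1}{i}_q}=\frac{q^{k-s+1-i}-1}{q^{k-s+1}-1},
\]
Lemma~\ref{lem:countnrdisjoint} says $n_q(k,s)=\sum_i w_i r_i$ and the identity above says $\qbin{k}{s}_q=\sum_i w_i$; so $n_q(k,s)/\qbin{k}{s}_q$ is a weighted average of $r_1,\dots,r_s$ with nonnegative weights, and it is enough to bound it.

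Set $\tau:=(q^3-q+1)/q^4$. I would next record the elementary facts: for $i\ge 2$ we have $r_i\le r_2<q^{-2}\le\tau$, since $q^2\le q^3-q+1$; and $r_1\le\tau$ if and only if $q^{k-s}\le q^2$, i.e. $k-s\le 2$. Hence if $k-s\le 2$ (which includes the trivial case $k=s$, where $n_q(s,s)=0$) all the $r_i$ are at most $\tau$, and the weighted average is at most $\tau$. If instead $k-s\ge 3$, then $r_1>\tau$ but $r_i<\tau$ for every $i\ge 2$, so after dropping the nonpositive terms $w_i(r_i-\tau)$, $i\ge 3$, it suffices to prove the single inequality $w_1(r_1-\tau)\le w_2(\tau-r_2)$. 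Putting $t:=q^{k-s}\ge q^3$, a short computation gives
\[
\frac{w_1}{w_2}=\frac{(q-1)(q^2-1)q^{s-2}\,t}{(q^{s-1}-1)(t-1)},\qquad
\frac{\tau-r_2}{r_1-\tau}=\frac{q(q^2-1)\,t+q^3+1}{q\,t-q^3-1},
\]
so, using $(q-1)q^{s-2}\le q^{s-1}-1$, the claim follows once $(q^2-1)t(qt-q^3-1)\le(t-1)\bigl(q(q^2-1)t+q^3+1\bigr)$; on expanding, the $t^2$-terms cancel and this becomes the linear inequality $q(q+1)(q^3-q^2+1)\,t\ge q^3+1$, valid for all $t\ge 1$.

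The two identities and the bookkeeping with the $r_i$ are routine. The one place demanding care is the regime $k-s\ge 3$, and the reason is that here the estimate is asymptotically sharp — the ratio already approaches $\tau$ for $s=2$ — so $r_1$ genuinely exceeds $\tau$ and one cannot bound the weighted average by its largest term; the point is to show that the second term $w_2 r_2$ carries enough weight below $\tau$ to compensate, which is exactly the content of $w_1(r_1-\tau)\le w_2(\tau-r_2)$, and the cancellation of the quadratic terms in $t$ is what makes that verifiable.
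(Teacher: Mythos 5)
Your proof is correct, and it takes a genuinely different and cleaner route than the paper's. The paper splits into three cases ($s=2$; $s\geq3,\,k\geq s+3$; $s\geq3,\,s\leq k\leq s+2$) and, for the middle case, applies the $q$-Vandermonde identity in the form $\qbin{k-1}{s}_q=\sum_{i}\qbin{s-1}{s-i}_q\qbin{k-s}{i}_q q^{(k-s-i)(s-i)}$ after first peeling off the $i=1$ term and bounding it separately via Lemma~\ref{lem:qbinestimate}(b). You instead use the companion Vandermonde instance that produces $\qbin{k}{s}_q=\sum_i w_i$ with \emph{the same} weights $w_i$ that appear in Lemma~\ref{lem:countnrdisjoint}, so that $n_q(k,s)/\qbin{k}{s}_q$ is literally a convex combination of the explicit, monotone quantities $r_i=\frac{q^{k-s+1-i}-1}{q^{k-s+1}-1}$. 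This collapses the problem to checking $r_i\leq\tau$ for all $i$ when $k-s\leq 2$, and to a single two-term compensation $w_1(r_1-\tau)\leq w_2(\tau-r_2)$ when $k-s\geq3$, which after the cancellation of $t^2$-terms reduces to the trivially true linear inequality $q(q+1)(q^3-q^2+1)t\geq q^3+1$. I checked the computations of $w_1/w_2$, $(\tau-r_2)/(r_1-\tau)$, and the expansion, and they are all correct; note also that the step $(q-1)q^{s-2}\leq q^{s-1}-1$ holds (with equality at $s=2$) for all $s\geq2$, so the argument is uniform in $s$. The weighted-average framing makes it transparent why the bound is asymptotically tight (as $k-s\to\infty$, the weight concentrates on $i=1,2$ and the average tends to $\tau$), which the paper's computation obscures; the paper's proof, in exchange, avoids the small amount of bookkeeping with the $r_i$ and $w_i$ ratios. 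Both are valid; yours is, I think, the more illuminating of the two.
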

\begin{proof}
We will split the proof into three cases: $s=2$; $s\geq 3, k\geq s+3$; $s\geq 3, s\leq k\leq s+2$.
\paragraph{Case $s=2$.}
We use Lemma~\ref{lem:countnrdisjoint} and write out the given expression for $s=2$. We obtain
\begin{eqnarray*}
\frac{n_q(k,2)}{\qbin{k}{2}_q}&=&\frac{\qbin{1}{0}_q\qbin{k-2}{1}_qq^{k-2}+\qbin{1}{1}_q\qbin{k-2}{2}_q}{\qbin{k}{2}_q}\\
&\leq&\frac{(q^{k-2}-1)(q^k-q^{k-2}+q^{k-3}-1)}{(q^k-1)(q^{k-1}-1)}\\
&\leq &\frac{1}{q}\frac{q^k-q^{k-2}+q^{k-3}-1}{(q^k-1)}\\
\end{eqnarray*}
\begin{eqnarray*}
&=&\frac{1}{q}(1-\frac{q^{k-2}-q^{k-3}}{q^k-1})\\
&\leq &\frac{1}{q}(1-q^{-2}+q^{-3})\\
&=& \frac{q^3-q+1}{q^4}.
\end{eqnarray*}
The inequality in the second line is an equality if $k-2\geq 2$.

\paragraph{Case $s\geq 3$ and $k\geq s+3$.}
We will use the $q$-Vandermonde identity (see \cite{stanley2011enumerative}, solution of exercise 1.100):
\begin{equation*}
\qbin{m+n}{\ell}_q=\sum_{j}\qbin{n}{j}_q\qbin{m}{\ell-j}_qq^{(m-\ell+j)j}.
\end{equation*}
Substituting $\ell=s$, $m=k-s$, $n=s-1$ and $j=s-i$, we obtain
\begin{equation}\label{eq:qvandermonde}
\qbin{k-1}{s}_q=\sum_{i=1}^s \qbin{s-1}{s-i}_q\qbin{k-s}{i}_qq^{(k-s-i)(s-i)},
\end{equation}
which is the form we will use.

Using again Lemma~\ref{lem:countnrdisjoint}, we obtain
\begin{eqnarray*}
n_q(k,s)&=&\sum_{i=1}^{s} \qbin{s-1}{i-1}_q\qbin{k-s}{i}_q q^{(s-i)(k-i-s+1)}\\
&=& \sum_{i=1}^{s} \qbin{s-1}{s-i}_q\qbin{k-s}{i}_q q^{(k-s-i)(s-i)} q^{s-i}\\
&\leq & (q^{s-1}-q^{s-2})\qbin{k-s}{1}_q q^{(k-s-1)(s-1)} + q^{s - 2} \left(\sum_{i = 1}^s \qbin{s-1}{s-i}_q\qbin{k-s}{i}_q q^{(k-s-i)(s-i)} \right).
\end{eqnarray*}
The first summand can be upper bounded by
\[
(q^{s-1}-q^{s-2})\qbin{k-s}{1}_q q^{(k-s-1)(s-1)}\leq \frac{q^{(k-s)s}}{q}\leq \frac{(q^2-1)(q-1)}{q^4}\qbin{k}{s}_q,
\]
where we used Lemma~\ref{lem:qbinestimate}(b) in the second inequality.

The second summand can be upper bounded using the $q$-Vandermonde identity (\ref{eq:qvandermonde}):
\[
q^{s - 2} \left(\sum_{i = 1}^s \qbin{s-1}{s-i}_q\qbin{k-s}{i}_q q^{(k-s-i)(s-i)} \right) = q^{s-2}\qbin{k-1}{s}_q\leq \frac{1}{q^2}\qbin{k}{s}_q.
\]
The result now follows from combining the two bounds and the fact that $\frac{(q^2-1)(q-1)}{q^4}+\frac{1}{q^2}=\frac{q^3-q+1}{q^4}$.

\paragraph{Case $s\geq 3$ and $s\leq k\leq s+2$.}
If $k=s$, we have $n_q(k,s)=0\leq \frac{q^3-q+1}{q^4}\qbin{s}{s}_q$ as required.

If $k=s+1$, then by Lemma~\ref{lem:countnrdisjoint} we have $n_q(k,s)=q^{s-1}$ and
\[
\frac{q^3-q+1}{q^4}\qbin{k}{s}_q=\frac{q^{s+4}-q^{s+2}+q^{s+1}-q^3+q-1}{(q-1)q^4}.
\]
We leave it to the reader to verify that indeed $q^{s-1}(q-1)q^4\leq q^{s+4}-q^{s+2}+q^{s+1}-q^3+q-1$.

Finally, suppose that $k=s+2$. We have
\[
n_q(k,s)=q^{2(s-1)}\qbin{2}{1}_q+q^{s-2}\qbin{s-1}{1}_q=\frac{q^{2s}-q^{2s-2}+q^{2s-3}-q^{s-2}}{q-1}\leq\frac{q^3-q+1}{q-1}q^{2s-3}. 
\]
On the other hand, using that $(q^{s+2}-1)(q^{s+1}-1)q^2\geq q^{s+2}q^{s+1}(q^2-1)$, we have
\[
\qbin{k}{s}_q=\frac{(q^{s+2}-1)(q^{s+1}-1)}{(q^2-1)(q-1)}\geq \frac{q^{2s+1}}{q-1}.
\]
Combining the two inequalities, we obtain $\qbin{k}{s}_q\frac{q^3-q+1}{q^4}\geq n_q(k,s)$.
\end{proof}

\begin{proof}[Proof of Theorem~\ref{thm:blocking_upper}]
First let $q = 2$. 
Let $P_1, \dots, P_m$ be points in $\F_2^k$ chosen uniformly at random independently from each other. 
Let $H$ be a codimension-$s$ affine subspace. 
The probability that $H$ does not contain any $P_1, \dots, P_m$ is equal to 
$(1 - 2^{-s})^m$. 
Since there are in total $2^s\qbin{k}{k - s}_2 = 2^s \qbin{k}{s}_2$ choices for $H$, the probability that
$B = \{P_1, \dots, P_m\}$ is not an $s$-blocking set is at most 
\begin{align*}
    \left(\frac{2^s - 1}{2^s}\right)^m\cdot 2^s\qbin{k}{s}_2.
\end{align*}
By Lemma~\ref{lem:qbinestimate}, this probability is upper bounded by 
\begin{align*}
    \left(\frac{2^s - 1}{2^s}\right)^m 2^{s(k - s) + s + 1} e^{2/3} < 2^{- m \log_{2}\left(\frac{2^s}{2^s - 1}\right) + s(k-s) + s + 2}.
\end{align*}
It follows that the probability is less than $1$ for $m \geq \frac{s(k - s) + s + 2}{\log_2{\frac{2^s}{2^s - 1}}}$, proving that there exists a collection of $\lceil \frac{s(k - s) + s + 2}{\log_2{\frac{2^s}{2^s - 1}}}\rceil$ points blocking all codimension-$s$ affine subspaces in $\F_2^k$.

We now consider the case $q \geq 3$. 
Let \[m\geq -1+ \frac{s(k-s)+s+2}{\log_q \left(\frac{q^4}{q^3-q+1}\right)}\] be an integer and let $\pi_1, \dots, \pi_m$ be $s$-dimensional spaces through the origin chosen uniformly at random, and independently from each other. 

For any codim-$s$ affine subspace $H$ that does not pass through the origin, the probability that $\pi_1,\ldots, \pi_m$ are disjoint from $H$ is at most $(\frac{q^3-q+1}{q^4})^m$ by Lemma~\ref{lem:grensqvdm}. The number of such $H$ equals $(q^s-1)\qbin{k}{s}_q$. Hence, the expected number of $H$ that are disjoint from $\pi_1,\ldots, \pi_m$ is at most
\begin{eqnarray*}
\left(\frac{q^3-q+1}{q^4}\right)^m\cdot (q^s-1)\qbin{k}{s}_q &\leq &\frac{q^4}{q^3-q+1}\cdot q^{-s(k-s)-s-2}(q^s-1)\qbin{k}{s}_q\\
&<&\frac{2q^4}{q^3-q+1}\cdot q^{-2}\\
&\leq & \frac{2q^2}{q^3-q+1}\\
&\leq&1.
\end{eqnarray*}
Here we used $\qbin{k}{s}_q\leq 2q^{s(k-s)}$, see Lemma~\ref{lem:qbinestimate}(a).

So there is an instance where $B=\pi_1\cup\cdots\cup \pi_m$ is a $s$-blocking set. By taking $m$ as small as possible, we have $m\leq(s(k-s)+s+2)/\log_q(\frac{q^4}{q^3-q+1})$, so we obtain
\[
b_q(k,s)\leq (q^s-1)\frac{s(k-s)+s+2}{\log_q(\frac{q^4}{q^3-q+1})}+1\]
as required.
\end{proof}

\begin{remark}
\label{rem:best_random}
For a fixed $s$, picking random points instead of picking random $s$-dimensional subspaces through the origin gives the upper bound of $\sim (q^s \ln{q}) s (k-s)$, which is worse for every $q \geq 3$, since $\ln q > 1$ for $q > e$. 
It can also be shown that picking $i$-dimensional subspaces, for any $i < s$, gives us worse bounds for $q \geq 3$. 
\end{remark}

\begin{proof}[Proof of Theorem~\ref{thm:strong_upper}]
    Let $s = 2$ and $q \geq 3$. 
    In the proof of Theorem~\ref{thm:blocking_upper} above, we have a random collection of planes through the origin whose union blocks every codimension-$2$ affine subspace in $\mathbb{F}_q^k$. 
    Each plane consists of $q + 1$ lines through the origin, which implies that we have a set of 
    at most $(q + 1)2k/\log_q(\frac{q^4}{q^3-q+1})$ lines through the origin in $\mathbb{F}_q^k$ whose union blocks every codimension-$2$ affine subspace. 
    By Lemma~\ref{lem:equivalence_strong_affine}, this collection of lines corresponds to a strong blocking set in $\mathrm{PG}(k - 1, q)$ of the same size.
\end{proof}

\begin{remark}
Our upper bound improves the previous best upper bound on strong blocking sets (see Theorem~\ref{thm:upper_bound_strong}) for all $q \geq 3$. 
Independent of our work, Alfarano, Borello and Neri have obtained the same upper bound in~\cite{ABN2023} using different techniques. 
We have also been informed by the authors of \cite{heger2021short} that a more careful analysis of their argument implies our upper bound. 
\end{remark}


\begin{remark}
Our upper bound on affine $s$-blocking sets also implies that for $2\leq s\leq k$, the smallest size of a strong $(s - 1)$-blocking set in $\mathrm{PG}(k - 1, q)$, for $q \geq 3$, is at most
\[ \frac{q^s-1}{q-1}\cdot \frac{s(k-s)+s+2}{\log_q(\frac{q^4}{q^3-q+1})}. \]
\end{remark}

\section{Lower bounds on strong blocking sets}
\label{sec:lower}
In this section we prove Theorem~\ref{thm:strong_lower}. Let $q$ be a prime power.
\begin{lemma}\label{lem:strong_to_code} Let $k$ be a positive integer. Then there is a $[b_q^*(k,1),k,(q-1)(k-1)+1]_q$ code.
\end{lemma}
\begin{proof}
Let $\{[g_1],\ldots, [g_n]\}\subseteq \mathrm{PG}(k-1,q)$ be a strong blocking set of size $n:=b_q^*(k,1)$ and let $C$ be the nondegenerate code with generator matrix $G=(g_1\mid\ldots\mid g_n)$. By Theorem~\ref{thm:equivalence_minimal_strong}, $C$ is a minimal code, which has minimum distance $d\geq (q-1)(k-1)+1$ by Remark~\ref{rem:strong_distance}. 
\end{proof}

\begin{proof}[Proof of Theorem~\ref{thm:strong_lower}]
Recall that $c_q$ is the unique solution $x\geq 1$ to $M_q \left(\frac{q - 1}{x(q+1)}\right) = \frac{1}{x(q + 1)}$. In Appendix~\ref{sec:appendix} we show that $c_q$ is well-defined and that $c_q > 1$. 
Let $c$ be a constant such that $1<c<c_q$. We will show that $b_q^*(k,1)\geq c(q+1)(k-1)$ for $k$ large enough.

Set $R=\frac{1}{c(q+1)}$ and $\delta=\frac{q-1}{c(q+1)}$. Note that $\delta<1-\frac{1}{q}$. Since $c<c_q$ and $M_q$ is strictly decreasing on $[0,1-1/q]$, we have $M_q(\delta)<R$. 
By the MRRW bound for $q$-ary codes, Theorem~\ref{thm:MRRW}, there is an integer $k_0$ such that for all $k\geq k_0$ there is no linear code $C\subseteq \F_q^n$ with rate at least $R$ and relative minimum distance at least $\delta$. 

Now let $k\geq k_0$. By Lemma~\ref{lem:strong_to_code}, there is a code $C$ of block length $n=b_q^*(k,1)$, dimension $k$ and minimum distance $d\geq (q-1)(k-1)+1$. It follows that $b_q^*(k,1)\geq c(q+1)(k-1)$ since otherwise the code $C$ has rate $\tfrac{k}{n}> \tfrac{1}{c(q+1)}=R$ and relative distance 
\[
\frac{d}{n}> \frac{(q-1)(k-1)+1}{c(q+1)(k-1)}> \delta.
\]
This concludes the proof. 
\end{proof}

\begin{remark}
From Lemma~\ref{lem:equivalence_strong_affine} and \eqref{eq:blocking_lb_basic}, it follows that every strong $(s - 1)$-blocking set in $\mathrm{PG}(k - 1, q)$ has size at least $\frac{q^s - 1}{q - 1} (k - s + 1)$. 
Using an inductive argument, we can also improve this lower bound by a constant factor, for every fixed $q, s$ and large $k$. 
\end{remark}

\section{Explicit constructions} 
\label{sec:explicit}

In Section~\ref{sec:upper} we constructed $s$-blocking sets by picking random $s$-dimensional subspaces through the origin in $\mathbb{F}_q^k$.
However, for explicit constructions, we will pick $1$-dimensional subspaces as then we can use the connection to strong $(s - 1)$-blocking sets in PG$(k-1,q)$ outlined in Lemma~\ref{lem:equivalence_strong_affine}. 
If the strong $(s - 1)$-blocking set has size $m$, then the corresponding affine $s$-blocking set has size $(q - 1) m + 1$. 
For example, if $q > s(k - s)$, there exists an explicit construction of strong $(s - 1)$-blocking sets of size at most $(s(k - s) + 1) (q^s - 1)/(q - 1)$ in $\mathrm{PG}(k - 1, q)$~\cite{fancsali2016higgledy, guruswami2016explicit}, which we can use to give an explicit construction of affine $s$-blocking sets of size at most $(q^s - 1) s(k-s) + q^s$. 
While this is a good explicit construction, it requires the field to be large with respect to $k$ and $s$. 
We will focus on fixed $q, s$, and large $k$. 

The main focus of explicit constructions has been on the special case of strong $1$-blocking sets, as these objects are equivalent to minimal codes~\cite{ABNR2022}. 
An easy construction, known as the ``tetrahedron'', is as follows. 
Take the union of all lines joining pairs of $k$ points in general position to get a strong blocking set of size $\binom{k}{2}(q - 1) + k$ in $\mathrm{PG}(k - 1, q)$, and thus an affine $2$-blocking set of size $\binom{k}{2}(q - 1)^2 + k(q-1) + 1$ in $\mathbb{F}_q^k$. 
Note that the dependency on the dimension $k$ is quadratic in this construction.
For $q = 2$, minimal codes are equivalent to intersecting codes, and thus we already have explicit constructions of strong blocking sets in $\mathbb{F}_2^k$ of size linear in $k$~\cite{cohen1994intersecting}. 
 Recently, an explicit construction of a strong blocking set of size linear in the dimension, for any fixed $q \geq 3$, was obtained by Bartoli and Borello~\cite[Corollary 3.3]{bartoli2021small}.
 The same construction also appears in an earlier work of Cohen, Mesnager and Randriam \cite{cohen2016yet}, and the main idea is to concatenate algebraic geometric codes with the simplex code. 
 They proved that for every prime power $q$, there exists an infinite sequence of $k$'s such that there is an explicit construction of a strong blocking set in the projective space $\mathrm{PG}(k - 1, q)$ of size $\sim q^4 k/4$.
 The problem of giving an explicit construction which also has a linear dependence on $q$, has recently been solved in~\cite{abdn2022}.

 \begin{thm}[Alon, Bishnoi, Das, Neri 2023]
 \label{thm:abdn}
There is an absolute constant $c$ such that for every prime power $q$ and $k$ large enough, there is an explicit construction of strong blocking sets in $\mathrm{PG}(k - 1, q)$ of size at most $c(q + 1)k$. 
 \end{thm}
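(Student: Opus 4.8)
The plan is to produce an \emph{explicit} strong blocking set of size at most $c(q+1)k$ in $\mathrm{PG}(k-1,q)$; by Theorem~\ref{thm:equivalence_minimal_strong} this is the same as an explicit nondegenerate minimal $[n,k]_q$ code with $n\le c(q+1)k$. The starting point is the observation that the probabilistic constructions already available meet this bound: a union of $O(k)$ lines in $\mathrm{PG}(k-1,q)$ chosen at random (equivalently, $O(k)$ random planes through the origin in $\F_q^k$, exactly as in the proof of Theorem~\ref{thm:strong_upper}) is a strong blocking set with positive probability, and because one picks lines rather than points this avoids the spurious $\log q$ factor that a random \emph{point} set would incur (cf.\ Remark~\ref{rem:best_random}). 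So the real task is to \emph{derandomise} such an ``$O(k)$ lines'' construction, and the natural explicit object producing $O(k)$ lines in $\mathrm{PG}(k-1,q)$ is an expander graph.

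Concretely, I would fix a constant degree $d$, take an explicit $d$-regular (near-)Ramanujan graph $\Gamma$ — an LPS Cayley graph or an iterated zig-zag/replacement-product graph — on a vertex set of size $\Theta(k)$, assign to each vertex $v$ a point $P_v$ of $\mathrm{PG}(k-1,q)$ and to each edge $\{u,v\}$ the line $\langle P_u,P_v\rangle$, and let $S$ be the union of these lines. Then $|S|\le (q+1)|E(\Gamma)|=(q+1)\cdot\tfrac d2\cdot\Theta(k)=O((q+1)k)$ with the constant independent of $q$, the dimension is pinned to $k$ by a rank count (so the associated code is nondegenerate), and explicitness of $S$ follows from that of $\Gamma$ together with an explicit choice of the $P_v$. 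To make the spanning behave well over small fields, the placement of the $P_v$ may itself go through a short concatenation step: take the $P_v$ from the columns of a generator matrix of an explicit asymptotically good code over a bounded extension $\F_{q^s}$ and read off the $\F_q$-points, so that ``most'' subsets of the $P_v$ behave as if in general position. One then has to verify that $S$ is a strong blocking set: for every hyperplane $H$, with $A=\{v:P_v\in H\}$ and $B=V(\Gamma)\setminus A$, the points $P_v$ with $v\in A$ span a subspace of $H$ of codimension about $|B|$, and one must recover the missing directions from lines $\langle P_u,P_v\rangle$ crossing $H$. Each such line contributes a genuinely new direction under a mild genericity condition on its endpoints relative to $H$, and the edge-expansion of $\Gamma$ — via the expander mixing lemma applied to the cut $(A,B)$ and to the induced graph $\Gamma[B]$ — should guarantee that there are enough crossing lines and that they sit in sufficiently ``independent'' position to fill the gap, uniformly over all $q^{\Theta(k)}$ hyperplanes.

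The hard part is exactly this last verification. The subtlety is that the most naive instantiation — taking the $P_v$ to be a basis of $\F_q^k$ with $|V(\Gamma)|=k$ — does \emph{not} work: there $\Gamma[B]$ would have to be connected for every set $B$, which forces $\Gamma$ to be complete and destroys the size bound. So the pseudorandom/algebraic placement of the $P_v$ is essential, and one must prove a robust statement of the form ``for every hyperplane, almost every crossing line adds a new coordinate direction'' and couple it with expansion in the regime where $B$ has intermediate size (where neither the trivial bound on $\Gamma[B]$ nor on the cut alone suffices). Making these two ingredients — a well-spread point configuration and a good expander — cooperate, while keeping the per-edge cost exactly $O(q+1)$ so that the final constant does not blow up with $q$, is the crux; it is also where a genuine expander (rather than an arbitrary bounded-degree graph) is needed, and where this approach improves on the $q^4$-type bounds obtainable by plain concatenation à la Bartoli--Borello.
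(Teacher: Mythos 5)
Your construction is the one the paper sketches (and the one in the cited reference \cite{abdn2022}): take an explicit constant-degree expander $\Gamma$ whose vertices are the coordinate positions of an explicit asymptotically good $[n,k,d]_q$ code, place the point $P_v$ at the $v$-th column of a generator matrix, and output the union of the lines $\langle P_u,P_v\rangle$ over the edges. The size bound $(q+1)\cdot\tfrac d2\cdot n=O((q+1)k)$ and the explicitness are exactly as you say, and your observation that placing the $P_v$ at a basis would force $\Gamma$ to be complete is the right reason why a well-spread (code-based) placement is essential.

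Where your plan goes wrong is the verification. Fix a hyperplane $H$ and let $A=\{v:P_v\in H\}$, $B=V(\Gamma)\setminus A$. A line $\langle P_u,P_v\rangle$ with $u\in A$ and $v\in B$ is not contained in $H$, so it meets $H$ in exactly one point, namely $P_u$ itself; such a ``crossing'' line therefore contributes nothing new to the span of $S\cap H$, no matter how generically the points are placed. The productive edges are those with \emph{both} endpoints in $B$: each such line meets $H$ in a single point that is not among the $P_v$, and if $C$ is a connected component of $\Gamma[B]$ then these intersection points, together with the $P_v$ for $v\in A$, span $\langle A\cup C\rangle\cap H$. The sufficient condition is thus exactly the one isolated in Lemma~\ref{lem:main_const}: for every vertex set $A$, some component $C$ of $\Gamma-A$ satisfies $\langle A\cup C\rangle=\mathrm{PG}(k-1,q)$. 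This is guaranteed purely by the vertex integrity bound $\iota(\Gamma)>(1-\delta)n$ (so that $|A|+|C|>n-d$) combined with the fact that no hyperplane contains more than $n-d$ of the points; no ``mild genericity condition'' on endpoints and no mixing-lemma analysis of the cut $(A,B)$ is needed, only the (nontrivial but purely graph-theoretic) fact that constant-degree expanders have vertex integrity $\Omega(n)$. Likewise, your claim that $\langle\{P_v:v\in A\}\rangle$ has codimension about $|B|$ in $H$ is not something the construction gives you. So the objects in your proposal are the right ones, but the mechanism by which each hyperplane gets spanned must be replaced by the connectivity/vertex-integrity argument.
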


\begin{proof}[Proof of Theorem~\ref{thm:explicit}]
Let $S$ be an explicitly constructed strong blocking set of size at most $c (q + 1) k$ in $\mathrm{PG}(k - 1, q)$. 
By Lemma~\ref{lem:equivalence_strong_affine}, the union of lines in $\mathbb{F}_q^k$ corresponding to the points of $S$ gives us an explicit construction of 
an affine-$2$ blocking set in $\mathbb{F}_q^k$ of size $(q - 1) c (q + 1) k + 1 = c (q^2 - 1) k + 1$. 
\end{proof}

For the sake of completeness we give a sketch of the construction in~\cite{abdn2022}. 
\begin{lemma}\label{lem:main_const}
 Let $\mathcal{M}=\{P_1,\ldots,P_n\}$ be a set of points in $\mathrm{PG}(k-1,q)$ and let $G=(\mathcal{M},E)$ be a graph on these points.
 If for every $S\subseteq \mathcal{M}$ there exists a connected component $C$ in $G - S$ such that 
 \[\langle S\cup C\rangle=\mathrm{PG}(k-1,q),\]
 then the set \[\bigcup_{ P_iP_j \in E} \langle P_i, P_j \rangle\] 
 is a strong blocking set. 
 \end{lemma}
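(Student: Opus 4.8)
The plan is to prove the contrapositive-flavored statement directly: assuming the graph-connectivity hypothesis, show that every hyperplane $H$ of $\mathrm{PG}(k-1,q)$ meets the candidate set $B := \bigcup_{P_iP_j\in E}\langle P_i,P_j\rangle$ in a spanning subset of $H$. So fix a hyperplane $H$ and let $S := \{P_i \in \mathcal{M} : P_i \in H\}$ be the set of vertices lying on $H$. Apply the hypothesis to this $S$: there is a connected component $C$ of the graph $G - S$ with $\langle S \cup C\rangle = \mathrm{PG}(k-1,q)$. The goal is to exhibit enough points of $B\cap H$ to span $H$.

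The key observation is that for an edge $P_iP_j$ with both endpoints outside $H$ (which is exactly the situation for edges inside the component $C$, since $C$ is a component of $G-S$ and $S$ is precisely the vertices on $H$), the line $\langle P_i,P_j\rangle$ is not contained in $H$, hence meets $H$ in exactly one point; call it $R_{ij}$, and note $R_{ij} \in B\cap H$. First I would show that $\langle S \cup \{R_{ij} : P_iP_j \in E(C)\}\rangle \supseteq \langle S \cup C\rangle = \mathrm{PG}(k-1,q)$, equivalently that this span contains every vertex of $C$. This is where connectivity of $C$ is used: pick any vertex $P$ of $C$ and a spanning tree of $C$ rooted at $P$; walking along the tree, whenever we have already placed $P_i$ in the span $U := \langle S \cup \{R_{ab} : P_aP_b \in E(C)\}\rangle$ and $P_iP_j$ is a tree edge, the two-dimensional subspace $\langle P_i, P_j\rangle$ contains $P_i \in U$ and $R_{ij} \in U$, and since $P_i \neq R_{ij}$ (as $P_i \notin H$ but $R_{ij}\in H$), the whole line $\langle P_i,P_j\rangle$ lies in $U$, so $P_j \in U$. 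The only thing needed to start the induction is a single vertex of $C$ in $U$; this follows because some vertex $P_i$ of $C$ has an edge $P_iP_j$ in $C$ (if $C$ has at least two vertices — the case $|C|=1$ must be handled separately, see below), and then $\langle P_i,P_j\rangle$ meets $H$ in $R_{ij}\in U$; but we still need one endpoint in $U$ to bootstrap.

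To fix the bootstrap cleanly, I would instead argue as follows: the span $U$ contains $S$ and all the intersection points $R_{ij}$ for tree edges, and $U \subseteq H$ would force $\mathrm{PG}(k-1,q) = \langle S \cup C\rangle \subseteq \langle H \cup C\rangle$; but actually the cleanest route is to note $U \cup \{P : P\in C\}$ — let me reorganize. Consider the subspace $U' := \langle S \cup \{R_{ij}: P_iP_j\in E(C)\}\rangle$, all of which lies in $H$. I claim $\langle U' \cup C\rangle = \langle U' \cup \{P\}\rangle$ for any single vertex $P$ of $C$: indeed by the tree-walk argument above, once $P = P_i \in \langle U'\cup\{P\}\rangle$ we get every other vertex of $C$ in $\langle U'\cup\{P\}\rangle$ via the lines $\langle P_i,P_j\rangle \ni R_{ij}\in U'$. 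Hence $\mathrm{PG}(k-1,q) = \langle S\cup C\rangle \subseteq \langle U'\cup\{P\}\rangle$, so $\langle U'\cup\{P\}\rangle$ is the whole space, which has projective dimension $k-1$; since $U'\subseteq H$ has dimension at most $k-2$, adding the single point $P$ increases the dimension by exactly one, forcing $\dim U' = k-2$, i.e. $U' = H$. Since $U' \subseteq B\cap H$'s span (every $R_{ij}\in B\cap H$ and every point of $S$ is in $\mathcal{M}\cap H \subseteq B\cap H$), we conclude $\langle B\cap H\rangle = H$, as desired.

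The main obstacle — and the only real subtlety — is the degenerate case where the guaranteed component $C$ is a single isolated vertex $\{P\}$, so there are no edges $R_{ij}$ to use. In that case the hypothesis gives $\langle S \cup \{P\}\rangle = \mathrm{PG}(k-1,q)$ with $S \subseteq H$, forcing $\langle S\rangle = H$ directly (same dimension count as above), and since $S \subseteq \mathcal{M}\cap H \subseteq B\cap H$ we are done immediately. One should also double-check the edge case $S = \emptyset$ (no point of $\mathcal{M}$ lies on $H$), which is handled uniformly by the same argument since then $G - S = G$ and the same reasoning applies. Writing this up carefully, the spanning-tree induction is routine linear algebra over $\mathbb{F}_q$; the one thing to be careful about is consistently distinguishing "lies in $H$" from "spans $H$" and tracking projective versus vector-space dimensions.
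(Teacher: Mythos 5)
Your overall reduction is the right one, and it matches the intended argument: fix a hyperplane $H$, apply the hypothesis with $S = \mathcal{M}\cap H$, walk a spanning tree of the returned component $C$ using the intersection points $R_{ij}=\langle P_i,P_j\rangle\cap H$, and finish by a dimension count showing that $\langle S\cup\{R_{ij}\}\rangle = H$. Your reformulation of the bootstrap (arguing inside $\langle U'\cup\{P\}\rangle$ rather than inside $U'\subseteq H$) is correct, and the $|C|=1$ and $S=\emptyset$ cases are handled. However, the final step contains an unjustified claim: you assert $\mathcal{M}\cap H\subseteq B\cap H$, i.e.\ that every $P_i\in S$ lies in $B=\bigcup_{P_aP_b\in E}\langle P_a,P_b\rangle$. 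That is true for any vertex incident to at least one edge (it lies on the line of that edge), but a vertex of $\mathcal{M}$ that is isolated in $G$ need not lie on any of the lines $\langle P_a,P_b\rangle$, and the hypothesis as stated does not forbid isolated vertices. So your $U'=H$ may be a span that uses a point outside $B$, and the inference $\langle B\cap H\rangle=H$ does not follow.

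The gap is local and repairable without changing the structure of the proof. Apply the hypothesis to $S^{*}:=(\mathcal{M}\cap H)\cap B$ instead of to $S$, and let $C$ be the returned component of $G-S^{*}$. Any vertex of $C$ lying in $H$ is not in $S^{*}$, hence not in $B$, hence isolated in $G$ (a non-isolated vertex always lies on the line of one of its own edges), hence also isolated in $G-S^{*}$; this would make $C$ a singleton contained in $H$, so $\langle S^{*}\cup C\rangle\subseteq H\neq\mathrm{PG}(k-1,q)$, a contradiction. Therefore $C$ is disjoint from $H$, the tree walk and dimension count go through verbatim with $S^{*}$ in place of $S$, and now $S^{*}\cup\{R_{ij}\}\subseteq B\cap H$ as required. (Equivalently, one may simply add the standing assumption that $G$ has no isolated vertices, which holds in the expander application; but the lemma as stated does not grant this.)
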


For a graph $G$, its vertex integrity~\cite{bagga1992survey}  is defined as 
\[\iota(G) = \min_{S \subseteq V(G)} \left( |S| +  \kappa(G - S)\right),\]
where $\kappa(G - S)$ denotes the size of the largest connected component of the graph obtained by deleting the set $S$ of vertices from $G$. 
Graphs with high vertex integrity along with points in $\mathrm{PG}(k - 1, q)$ that have low intersection with every hyperplane give rise to the construction that we want.

\begin{cor}
Let $\mathcal{P}=\{P_1,\ldots,P_n\}$ be a set of points in $\mathrm{PG}(k-1,q)$ such that every hyperplane meets $\mathcal{P}$ in at most $n - d$ points 
and let $G$ be a graph on $\mathcal{P}$ with $\iota(G) \geq n - d + 1$. 
Then the set \[\bigcup_{ P_iP_j \in E} \langle P_i, P_j \rangle\] 
 is a strong blocking set.
\end{cor}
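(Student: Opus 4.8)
The plan is to deduce this corollary directly from Lemma~\ref{lem:main_const}, so the only work is to verify that the hypothesis $\iota(G) \geq n - d + 1$ forces the connectivity-and-span condition required by that lemma. First I would take an arbitrary subset $S \subseteq \mathcal{P}$ and unwind what vertex integrity gives us: by definition of $\iota(G)$, we have $|S| + \kappa(G - S) \geq \iota(G) \geq n - d + 1$, so the largest connected component $C$ of $G - S$ satisfies $|C| = \kappa(G - S) \geq n - d + 1 - |S|$, hence $|S| + |C| \geq n - d + 1$.

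Next I would show that $\langle S \cup C \rangle = \mathrm{PG}(k-1,q)$. Suppose not; then $S \cup C$ is contained in some hyperplane $H$. But $|S \cup C| = |S| + |C| \geq n - d + 1$ (the union is disjoint since $C$ is a component of $G - S$, which lives on $\mathcal{P} \setminus S$), contradicting the assumption that every hyperplane meets $\mathcal{P}$ in at most $n - d$ points. Therefore no such $H$ exists and $\langle S \cup C\rangle$ is the whole space. Since $S$ was arbitrary, the hypothesis of Lemma~\ref{lem:main_const} is satisfied, and we conclude that $\bigcup_{P_iP_j \in E}\langle P_i, P_j\rangle$ is a strong blocking set.

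There is essentially no main obstacle here — the corollary is a clean packaging of Lemma~\ref{lem:main_const} through the graph parameter $\iota(G)$. The one point requiring a little care is the edge case where $G - S$ has no vertices at all (i.e.\ $S = \mathcal{P}$); in that degenerate situation $\kappa(G-S) = 0$ and $|S| = n$, so $\iota(G) \leq n$, which combined with $\iota(G) \geq n-d+1$ only causes trouble if $d \geq 1$, which always holds, so this case does not actually arise for the minimizing $S$ unless $d \le 0$; in any event, for the minimizer $S$ one has a nonempty component $C$ and the argument above applies verbatim. I would state the proof in three or four lines along exactly these lines.

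\begin{proof}
Let $S\subseteq \mathcal{P}$ be arbitrary and let $C$ be a largest connected component of $G - S$. By definition of vertex integrity, $|S| + |C| = |S| + \kappa(G - S) \geq \iota(G) \geq n - d + 1$. Since $C$ is a connected component of $G - S$, it is disjoint from $S$, so $|S \cup C| = |S| + |C| \geq n - d + 1$. If $S \cup C$ were contained in a hyperplane $H$, then $|H \cap \mathcal{P}| \geq n - d + 1$, contradicting the hypothesis. Hence $\langle S \cup C\rangle = \mathrm{PG}(k-1,q)$. As $S$ was arbitrary, Lemma~\ref{lem:main_const} applies and $\bigcup_{P_iP_j\in E}\langle P_i,P_j\rangle$ is a strong blocking set.
\end{proof}
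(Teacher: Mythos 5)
Your proof is correct and is exactly the derivation the paper intends: the corollary is stated without proof in the paper precisely because it is this immediate unpacking of the vertex-integrity definition into the hypothesis of Lemma~\ref{lem:main_const}. The disjointness observation ($C \subseteq \mathcal{P}\setminus S$, so $|S\cup C| = |S|+|C|$) is the only step that needs saying, and you say it; the $S=\mathcal{P}$ edge case you fret over in the discussion is harmless (then $\langle S\rangle=\mathrm{PG}(k-1,q)$ already, since $\mathcal{P}$ spans by nondegeneracy) and the paper is equally silent about it.
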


A set of $n$ points in $\mathrm{PG}(k - 1, q)$ that meets every hyperplane in at most $n - d$ points is equivalent to a (nondegenerate) $[n, k, d]_q$ code (see Lemma~\ref{lem:intersection}).
Therefore, if we pick our set $\mathcal{P}$ corresponding to an asymptotically good linear code and our graph $G$ to be a bounded degree graph with $\iota(G) \geq n - d + 1$, then we can get a construction where for any fixed $q$ the size of the strong blocking set is linear in $q k$. 

\begin{cor}
Say there exists a family of codes $C$ with lengths $\{n_i\}_{i \geq 1}$, rate $R$ and relative distance $\delta$, and a family of graphs $G_i$ on $n_i$ vertices with maximum degree $\Delta$ and $\iota(G_i) > (1 - \delta) n_i$.
Then there exists strong blocking sets of size $\Delta n_i/2$ in $\mathrm{PG}(Rn_i - 1, q)$, for al $i \geq 1$. 
\end{cor}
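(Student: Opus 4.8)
The plan is to read this off from the point--set/code correspondence (Lemma~\ref{lem:intersection}) together with the Corollary immediately above. Fix $i$ and abbreviate $n = n_i$, $k = Rn_i$ (the dimension of $C_i$), $d = \delta n_i$ (its minimum distance) and $G = G_i$; I will assume $C_i$ is nondegenerate, which is harmless for the conclusion. By Lemma~\ref{lem:intersection}, any generator matrix $(g_1 \mid \cdots \mid g_n)$ of $C_i$ produces a set $\mathcal{P} = \{[g_1], \ldots, [g_n]\}$ of $n$ points in $\mathrm{PG}(k - 1, q) = \mathrm{PG}(Rn_i - 1, q)$ with the property that every hyperplane meets $\mathcal{P}$ in at most $n - d$ of its points.

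Next I would transfer the graph $G_i$ onto $\mathcal{P}$. Since $|V(G_i)| = n = |\mathcal{P}|$, fix any bijection between them --- equivalently, order the columns of the generator matrix so that $g_j$ is the point attached to the $j$-th vertex of $G_i$ --- and regard $G_i$ as a graph $G$ on the vertex set $\mathcal{P}$, with unchanged maximum degree $\Delta$ and unchanged vertex integrity. The hypothesis $\iota(G_i) > (1 - \delta) n$ now reads $\iota(G) > n - d$, and since $\iota(G)$ is an integer this upgrades to $\iota(G) \geq n - d + 1$. We are therefore exactly in the situation of the Corollary above: $\mathcal{P}$ is a set of $n$ points in $\mathrm{PG}(k - 1, q)$ meeting every hyperplane in at most $n - d$ points, and $G$ is a graph on $\mathcal{P}$ with $\iota(G) \geq n - d + 1$. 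Hence
\[
\bigcup_{P_j P_\ell \in E(G)} \langle P_j, P_\ell \rangle
\]
is a strong blocking set in $\mathrm{PG}(k - 1, q) = \mathrm{PG}(R n_i - 1, q)$. For the size: $G$ has maximum degree $\Delta$, hence at most $\Delta n / 2$ edges, so the displayed set is a union of at most $\Delta n_i / 2$ projective lines (and as a point set contains at most $(q+1)\Delta n_i/2$ points). Running this for every $i \geq 1$ yields the claimed family.

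I do not expect a genuine obstacle here: the statement is essentially a bookkeeping consequence of results already established. The only points that need a moment's care are (i) invoking Lemma~\ref{lem:intersection} requires $C_i$ to be nondegenerate, and if one wants $\mathcal{P}$ to consist of $n$ \emph{distinct} points so that each $\langle P_j, P_\ell \rangle$ is genuinely a line one should also take $C_i$ projective; (ii) what is meant by ``size'', i.e.\ whether one counts the $\Delta n_i / 2$ lines or the $\sim (q+1)\Delta n_i / 2$ points of the resulting blocking set; and (iii) the harmless rounding one does if $R$ and $\delta$ are only limiting values rather than attained exactly for each $i$, which I would handle by working with $k_i = R n_i$ and $d_i = \delta n_i$ exactly (as the statement implicitly does) or by passing to large $i$.
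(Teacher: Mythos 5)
Your proof is correct and follows exactly the route the paper intends: convert the code to a point set via Lemma~\ref{lem:intersection}, transfer the graph onto those points, upgrade $\iota(G) > n - d$ to $\iota(G) \geq n - d + 1$ by integrality, and invoke the preceding corollary; the $\Delta n_i/2$ edge count then bounds the number of lines. Your side remarks about nondegeneracy/projectivity and the line-count-versus-point-count reading of ``size'' are the same caveats the paper leaves implicit.
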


It is shown in~\cite{abdn2022} that constant degree expander graphs $G$ on $n$ vertices have the property that $\iota(G) = \Omega(n)$. 
The explicit constructions of these graphs~\cite{lubotzky1988ramanujan} and asymptotically good linear codes~\cite{tsfasman2013algebraic} thus imply Theorem~\ref{thm:abdn}, and we refer to~\cite{abdn2022} for the best values of the constant $c$ obtained from this construction.

\section{Linear trifferent codes}
\label{sec:trifferent}

Recall that a linear subspace $C$ of $\F_3^n$ is called a \textit{linear trifferent code} if for all distinct $x, y, z \in C$ there exists a coordinate $i$ such that $\{x_i, y_i, z_i\} = \F_3$. The maximum size of a linear trifferent code is denoted by $T_L(n)$.

We say that an (affine) $2$-blocking set $S\subseteq \F_3^k$ is \textit{symmetric} if it is of the form $S=\{\vec{0}\}\cup B\cup -B$ for some set $B\subseteq \F_3^k$. So by Lemma~\ref{lem:equivalence_strong_affine}, a set $S\subseteq \F_3^k$ is a symmetric $2$-blocking set if and only if it is of the form $S=\cup_{\ell\in \mathcal{L}}\ell$ for a strong blocking set $\mathcal{L}\subseteq \mathrm{PG}(k-1,3)$.

In~\cite{Pohoata2022} it was shown that a linear trifferent code of dimension $k$ in $\F_3^n$ gives rise to a symmetric $2$-blocking set in $\F_3^k$. We prove that this relation goes both ways.
\begin{thm}\label{thm:triff_to_block}
Let $G\in \F_3^{k\times n}$ be a matrix of rank $k$. 
Let $B$ be the set of columns of $G$ and let $C$ be the row-space of $G$. 
Then $C$ is a linear trifferent code in $\F_3^n$ if and only if $\{\vec{0}\}\cup B\cup -B$ is a $2$-blocking set in $\F_3^k$. 
\end{thm}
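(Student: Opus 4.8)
The plan is to establish the equivalence by unwinding both conditions into statements about linear combinations of the columns of $G$, and then matching them up. Let $b_1,\dots,b_n$ denote the columns of $G$, so $B=\{b_1,\dots,b_n\}$, and recall $C=\{uG : u\in\F_3^k\}$, where the $i$-th coordinate of $uG$ is $\langle u,b_i\rangle$. The first step is to rewrite the trifference condition. Given three distinct codewords, by linearity and the fact that $\F_3$-affine combinations let us translate, it suffices to understand when $\{0, x, y\}$ (with $x,y$ and $x-y$ all nonzero, equivalently $x,y$ linearly independent over $\F_3$ together with the zero word) fails to be ``rainbow'' in every coordinate. Writing $x=uG$, $y=vG$, the codeword triple $\{0,x,y\}$ is rainbow in coordinate $i$ precisely when $\{0,\langle u,b_i\rangle,\langle v,b_i\rangle\}=\F_3$, i.e. when $\langle u,b_i\rangle$ and $\langle v,b_i\rangle$ are the two distinct nonzero elements of $\F_3$ in some order. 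So $C$ is NOT trifferent iff there exist $u,v\in\F_3^k$ spanning a $2$-dimensional space (this captures ``three distinct codewords'' after translating one of them to $0$) such that for every $i$, the pair $(\langle u,b_i\rangle,\langle v,b_i\rangle)$ avoids $\{(1,2),(2,1)\}$, equivalently $\langle u,b_i\rangle + \langle v,b_i\rangle \ne 0$ whenever both are nonzero — hmm, I should be careful: $(1,2)$ and $(2,1)$ are exactly the pairs $(a,b)$ with $a,b\in\F_3\setminus\{0\}$ and $a\ne b$, equivalently $a+b=0$ with $a,b\ne 0$. So the bad event is: for all $i$, NOT($\langle u,b_i\rangle,\langle v,b_i\rangle$ both nonzero and negatives of each other), i.e. for all $i$, $\langle u+v,b_i\rangle = 0$ OR $\langle u,b_i\rangle=0$ OR $\langle v,b_i\rangle = 0$.

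The second step is to translate the blocking-set condition. By Lemma~\ref{lem:equivalence_strong_affine} (with $q=3$, $s=2$), $\{\vec 0\}\cup B\cup -B$ is a $2$-blocking set iff the projective point set $\mathcal{L}=\{[b_i]\}$ is a strong blocking set in $\mathrm{PG}(k-1,3)$, which by the definition of strong blocking set means: for every hyperplane $H$, the points of $\mathcal{L}$ lying on $H$ span $H$. Dually, a hyperplane is the kernel of a nonzero functional, and ``$\mathcal{L}\cap H$ does not span $H$'' means $\mathcal{L}\cap H$ lies in some codimension-$2$ subspace, i.e. there are two linearly independent functionals $u,v$ with: every $b_i$ killed by $u$ is also killed by $v$... no — more precisely, $\mathcal{L}$ is NOT a strong blocking set iff there exist linearly independent $u,v\in\F_3^k$ such that every $b_i$ with $\langle u,b_i\rangle=0$ also satisfies $\langle v,b_i\rangle = 0$? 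That's not symmetric enough either. The clean formulation I want: $\mathcal{L}$ fails to be strong blocking iff there is a codimension-$2$ subspace $W$ and a hyperplane $H\supsetneq W$ with $\mathcal{L}\cap H\subseteq W$; writing $H=\ker\phi$ and $W=\ker\phi\cap\ker\psi$ for independent $\phi,\psi$, this says: for all $i$, $\langle\phi,b_i\rangle = 0 \implies \langle\psi,b_i\rangle=0$, equivalently for all $i$, $\langle\phi,b_i\rangle\ne 0$ OR $\langle\psi,b_i\rangle = 0$. The key realization is that over $\F_3$ this is really a statement about the three functionals $\phi$, $\psi$, $\phi+\psi$ (or $\phi-\psi$), since $\{0,\phi,2\phi\}$ etc. — the reason the ``symmetric'' $\{0\}\cup B\cup -B$ shows up is exactly that $\pm b_i$ identification makes $\langle\phi,b_i\rangle$ only matter up to sign, matching $[b_i]$.

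Thus the heart of the proof is a purely $\F_3$-linear-algebra lemma: for linearly independent $u,v\in\F_3^k$ and a vector $b\in\F_3^k$, the condition ``$(\langle u,b\rangle,\langle v,b\rangle)$ is not a permutation of $(1,2)$'' coincides, after a suitable relabeling of the pair $(u,v)$ within the $2$-dimensional space $\langle u,v\rangle$, with ``$b$ lies on one of a prescribed pair of hyperplanes through a common codimension-$2$ space,'' and one checks that ranging over the $\binom{3^2-1}{?}$ ordered bases / the four lines of the plane $\langle u,v\rangle$ exhausts exactly the failure configurations on both sides. Concretely: a $2$-dimensional subspace of $(\F_3^k)^*$ has four $1$-dimensional subspaces; picking two of them gives a hyperplane pair, and the bad triples of codewords correspond to the $2$-dimensional subspaces spanned by pairs of these lines. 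So the plan is: (1) show $C$ not trifferent $\iff$ $\exists$ $2$-dim $U\le(\F_3^k)^*$ and an (unordered) pair of distinct lines $L_1,L_2\le U$ with $b_i\in L_1^\perp\cup L_2^\perp$ for all $i$ — wait, I need to recheck the direction of the perp. Let me just say: reduce both sides to a common combinatorial condition on how $B$ meets the hyperplanes through a fixed codimension-$2$ subspace, and verify the two reductions produce the same condition. The main obstacle I anticipate is bookkeeping the case analysis in the $\F_3$ lemma — correctly handling the coordinate where one of $\langle u,b_i\rangle,\langle v,b_i\rangle$ vanishes and the other does not, and making sure the ``$\exists$ three distinct codewords'' on the trifference side matches ``$\exists$ hyperplane whose intersection with $\mathcal L$ fails to span'' on the blocking side without an off-by-one in dimensions — but since $\F_3$ is small this is a finite check, and the $\pm$ symmetry of the set $\{\vec 0\}\cup B\cup -B$ is precisely what makes the two sides line up.
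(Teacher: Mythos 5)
Your route is genuinely different from the paper's: instead of a direct argument, you reduce to strong blocking sets via Lemma~\ref{lem:equivalence_strong_affine} and then try to match the failure conditions on both sides by a case analysis over a dual plane. That route can be made to work, but as written there is both a substantive error and a crucial unfinished step.

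First, a sign error in the translated trifference condition. You say the no-rainbow event for the triple $\{0,uG,vG\}$ is, at each coordinate, $\langle u+v,b_i\rangle = 0$ OR $\langle u,b_i\rangle=0$ OR $\langle v,b_i\rangle = 0$. But NOT(both nonzero and negatives of each other) unwinds to $\langle u,b_i\rangle=0$ OR $\langle v,b_i\rangle=0$ OR $\langle u,b_i\rangle+\langle v,b_i\rangle\neq 0$, i.e.\ $\langle u-v,b_i\rangle=0$ in the last disjunct (when both are nonzero and equal). Your version is satisfied exactly by the rainbow pairs $(1,2),(2,1)$ and fails on the non-rainbow pairs $(1,1),(2,2)$ — it is the negation of what you want. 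Consequently your later reformulation ``$b_i\in L_1^\perp\cup L_2^\perp$ for a pair of lines $L_1,L_2$'' is also wrong: the correct condition is $b_i\in\ker u\cup\ker v\cup\ker(u-v)$, a union of \emph{three} of the four hyperplanes through the codimension-$2$ space $\ker u\cap\ker v$, equivalently $b_i\notin\ker(u+v)\setminus(\ker u\cap\ker v)$. This does line up with your strong-blocking-set failure condition ``$\langle\phi,b_i\rangle\neq 0$ OR $\langle\psi,b_i\rangle=0$'' (take $\phi=u+v$, $\psi=u$), but you never carry out that match; you flag the ``finite check'' and leave it, and with the sign error in place the check would fail as stated.

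For contrast, the paper's proof is direct and avoids the dualization entirely. Given a codimension-$2$ affine subspace $V=\{x:u^\transp x=1,\ v^\transp x=2\}$ with $u,v$ independent, it takes the triple $u,v,u+v$ and notes that a rainbow coordinate forces $(u^\transp b,v^\transp b,(u+v)^\transp b)\in\{(1,2,0),(2,1,0)\}$, i.e.\ $b\in V$ or $-b\in V$. Conversely, given distinct $u,v,w$, it forms the codimension-$\leq 2$ affine set $\{x:(v-u)^\transp x=(w-v)^\transp x=1\}$, which avoids the origin, and blocking it produces a rainbow coordinate. That argument is self-contained (no appeal to Lemma~\ref{lem:equivalence_strong_affine}) and sidesteps the bookkeeping over the $4$ lines of a dual $\F_3$-plane that your approach requires. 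If you want to finish your version, fix the sign, replace the ``pair of lines'' with ``the hyperplane $\ker(u+v)$ is avoided outside $\ker u\cap\ker v$,'' and then verify that varying over the four lines of the dual plane realizes exactly the four choices of which hyperplane through the codimension-$2$ space is the forbidden one on the blocking side.
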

\begin{proof}
We first note that every $x\in C$ is of the form $x=u^\transp G$ for a unique $u\in \F_3^k$ (as $G$ has rank $k$) and the $i$-th coordinate of $x$ is equal to $u^\transp b$, where $b$ is the $i$-th column of $G$. 

For the forward implication, suppose that $\{\vec{0}\}\cup B\cup -B$ is a 2-blocking set. Since $C$ is a linear subspace, we only need to show the trifferent property for triples of distinct vectors $\vec{0},c,c'\in C$. Writing $c=u^\transp G$ and $c'=v^\transp G$, it suffices to show that there is a column $b$ of $G$ such that $\{u^\transp b, v^\transp v\}=\{-1,1\}$. Since $u$ and $v$ are distinct and nonzero, the set $S = \{x\in \F_3^m: u^\transp x=1, v^\transp x=-1\}$ contains an affine subspace of codimension~$2$. Since $\{\vec{0}\}\cup B\cup -B$ is a 2-blocking set it must intersect $S$. Hence, since $S$ does not contain the zero vector, it contains a vector $x\in B\cup -B$. If $x\in B$ we can take $b=x$ and otherwise we can take $b=-x$.

For the backward implication, suppose that $C$ is a trifferent code. Let $V\subseteq \F_3^k$ be an affine subspace of codimension~$2$ with $0\not \in V$. It suffices to show that there is a $b\in B$ with $b\in V$ or $-b\in V$. 
We can write $V=\{x\in \F_3^k: u^\transp x=1, v^\transp x=-1\}$ for certain linearly independent $u,v\in \F_3^k$. Applying the trifferent property to the three distinct vectors $\vec{0}$, $u^\transp G$, $v^\transp G$, there is a $b\in B$ such that $\{u^\transp b, v^\transp b\}=\{-1,1\}$. Hence, $b\in V$ or $-b\in V$ as required.     
\end{proof}

From Lemma~\ref{lem:equivalence_strong_affine}, Theorem~\ref{thm:equivalence_minimal_strong} and Theorem~\ref{thm:triff_to_block}, we can deduce that a linear code $C \subseteq \F_3^n$ is minimal if and only if it is trifferent. 
We give a direct short proof of this equivalence.

\begin{thm}\label{thm:equivalence_triff_min}
A linear code $C \subseteq \F_3^n$ is trifferent if and only if it is minimal. 
\end{thm}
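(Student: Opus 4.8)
The statement to prove is the equivalence: a linear code $C \subseteq \F_3^n$ is trifferent if and only if it is minimal. The plan is to give a direct combinatorial argument working purely with supports of codewords in $\F_3^n$, avoiding the detour through blocking sets. The key observation is that over $\F_3$, minimality is about inclusions of supports of \emph{two} codewords, while triference is about \emph{three} distinct codewords having a coordinate where they take all three values $0, 1, 2$; the bridge between "two" and "three" comes from the fact that in a linear code the three codewords $x, y, z$ can be normalised by translation (replace the triple by $0$, $y - x$, $z - x$), so triference is really a statement about \emph{pairs} of nonzero codewords $u = y - x$, $w = z - x$ with $u \neq w$.

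\textbf{Key steps.} First I would reformulate triference in this normalised form: $C$ is trifferent iff for every pair of distinct nonzero codewords $u, w \in C$ (note $u \neq w$ and $u, w \neq 0$, but we do \emph{not} need $u \neq 2w$ a priori --- if $u = 2w$ then on any coordinate $i \in \supp(w)$ the triple $\{0, u_i, w_i\} = \{0, 2w_i, w_i\} = \F_3$, so such pairs are automatically fine, or alternatively one restricts to the genuinely relevant case), there is a coordinate $i$ with $0, u_i, w_i$ all distinct, i.e.\ $i \in \supp(u) \cap \supp(w)$ and $u_i \neq w_i$. Next I would unpack minimality: $C$ is minimal iff there are no two nonzero codewords $a, b$ with $\supp(a) \subsetneq \supp(b)$. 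The heart of the proof is the following local fact over $\F_3$: given two nonzero vectors $u, w \in \F_3^n$, \emph{either} there is a coordinate $i$ with $i \in \supp(u)\cap\supp(w)$ and $u_i \neq w_i$ (the "trifference is witnessed" case), \emph{or} on every common-support coordinate $u$ and $w$ agree --- and in that latter case one can find a $\lambda \in \{1,2\}$ such that $\supp(u - \lambda w) \subsetneq \supp(w)$ or $\supp(w - \lambda u) \subsetneq \supp(u)$, exhibiting a failure of minimality. I would make this dichotomy precise by a short case analysis on the coordinatewise behaviour of $(u_i, w_i) \in \F_3^2$.

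\textbf{Carrying out the dichotomy.} For the "$\Leftarrow$" direction (minimal $\Rightarrow$ trifferent), suppose $C$ is minimal and take distinct nonzero $u, w$; I want a trifference coordinate. Suppose not: then on every $i \in \supp(u) \cap \supp(w)$ we have $u_i = w_i$. Consider $w - u$: on coordinates in $\supp(u)\setminus\supp(w)$ it is nonzero, on $\supp(w)\setminus\supp(u)$ it is nonzero, and on $\supp(u)\cap\supp(w)$ it vanishes. Hmm, this needs a little more care since I want a \emph{strict} support containment; the clean move is to look at which of $u$, $w$, $w-u$, $w+u$ has the smallest support and derive a containment, using that the four vectors $u, w$, and (over $\F_3$) the two "combinations" cover the coordinates with controlled cancellation. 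For the "$\Rightarrow$" direction (trifferent $\Rightarrow$ minimal), suppose $C$ is not minimal, so $\supp(a) \subsetneq \supp(b)$ for nonzero $a, b$; then I produce three distinct codewords with no trifferent coordinate. The natural candidates are $0$, $a$, $b$ (or $0$, $a$, $-a + \text{something}$): on $\supp(b)\setminus\supp(a)$ the value of $a$ is $0$, so only $0, b_i$ appear; on $\supp(a)$ I need $a_i$ and $b_i$ to not be all-three-distinct-with-zero, which forces checking $a_i = b_i$ there --- if that fails, I'd instead use the triple $0, a, b'$ where $b'$ is an appropriate scalar multiple of $b$ chosen so agreement holds, again a finite check over $\F_3$.

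\textbf{Main obstacle.} The genuinely fiddly part --- and the step I expect to require the most care --- is pinning down exactly which small set of linear combinations ($u$, $w$, $u \pm w$, scalar multiples) always contains a witness of strict support inclusion when no trifference coordinate exists; this is a finite verification over the six nonzero vectors of $\F_3^2$ worth of coordinate-types, but it must be organised cleanly so that the argument is symmetric and doesn't miss a case (e.g.\ when $\supp(u) = \supp(w)$ exactly, versus when the supports are incomparable). Everything else (the normalisation of triples by translation, the reformulation of both notions in support language) is routine. I would present the proof by first isolating the purely local $\F_3^2$ lemma and then deducing both directions from it.
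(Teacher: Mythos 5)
Your overall strategy is the same as the paper's: normalize a putative non-trifferent triple by translation to the form $(0,u,w)$ with $u,w$ distinct and nonzero, and then argue about supports. But both of the concrete ``key steps'' you commit to are false as stated, and the actual correct moves (which the paper uses) go in a slightly different direction than the ones you write down.

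For the direction ``minimal $\Rightarrow$ trifferent'' (contrapositive: no trifference coordinate for $(0,u,w)$ implies non-minimality), you claim one can find $\lambda\in\{1,2\}$ with $\supp(u-\lambda w)\subsetneq\supp(w)$ or $\supp(w-\lambda u)\subsetneq\supp(u)$. This fails for $u=(1,0)$, $w=(0,1)$ in $\F_3^2$: all four of $u-w$, $u-2w$, $w-u$, $w-2u$ have full support $\{1,2\}$, so none is strictly inside $\supp(u)=\{1\}$ or $\supp(w)=\{2\}$. The inclusion has to go the \emph{other} way: once you know $u_i=w_i$ on every common-support coordinate (the no-trifference hypothesis forces this, since $u_i,w_i\neq 0$ and $\{0,u_i,w_i\}\neq\F_3$), you get $\supp(u+w)=\supp(u)\cup\supp(w)$; and since $u\neq w$ together with agreement on common support forces $\supp(u)\neq\supp(w)$, one of $\supp(u)$, $\supp(w)$ is strictly contained in $\supp(u+w)$. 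The witnessing pair is $(u,u+w)$ or $(w,u+w)$, with the \emph{combination} having the larger support.

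For the direction ``trifferent $\Rightarrow$ minimal'' (contrapositive: $\supp(a)\subsetneq\supp(b)$ for nonzero $a,b$ implies some triple avoids trifference), your proposed triple $(0,a,b')$ with $b'=\lambda b$ a scalar multiple does not work in general. Take $a=(1,2,0)$, $b=(1,1,1)$: then $(0,a,b)$ has $\{0,2,1\}=\F_3$ at coordinate $2$, and $(0,a,2b)$ has $\{0,1,2\}=\F_3$ at coordinate $1$, so neither scaling helps --- no single $\lambda$ can make $b'$ agree with $a$ on all of $\supp(a)$ when $a$ takes both values $1$ and $2$ there. The paper's triple is $(a,-a,b)$ (equivalently, after translating, $(0,a,b-a)$): on $\supp(a)$ one has $\{a_i,-a_i\}=\{1,2\}$ and $b_i\in\{1,2\}$ since $\supp(a)\subseteq\supp(b)$, so the triple never reaches $\F_3$ there; on $\supp(b)\setminus\supp(a)$ only $\{0,b_i\}$ appears, and off $\supp(b)$ everything is $0$. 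Your aside ``or $0,a,-a+\text{something}$'' gestures at this, but the scalar-multiple fallback you actually propose is the one that breaks.
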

\begin{proof}
First suppose that $C$ is not minimal. Then there exist distinct nonzero codewords $u,v\in C$ with $\supp(u)\subsetneq \supp(v)$. 
Let $w=-u$, which must also lie in $C$ as $C$ is linear. 
Then there is no index $i$ such that $\{u_i,v_i,w_i\}=\F_3$, since $v_i=0$ implies $u_i=w_i=0$ and $u_i=0\iff w_i=0$. We conclude that $C$ is not a trifferent code.

For the other direction, suppose that $C$ is not a trifferent code. Let $u,v,w\in C$ be distinct elements such that $\{u_i,v_i,w_i\}\neq \F_3$ for every index $i$. We may assume that $w=\vec{0}$ (otherwise replace $u,v,w$ by $u-w,v-w,w-w$ respectively). Note that this implies that $u,v\neq \vec{0}$. Since there is no index $i$ for which $\{u_i,v_i\}=\{1,2\}$, we have $\supp(u+v)=\supp(u)\cup \supp(v)$. Moreover, since $u$ and $v$ are distinct, $\supp(u)\neq \supp(v)$, which implies that $\supp(u)\subsetneq \supp(u+v)$ or $\supp(v)\subsetneq \supp(u+v)$. We conclude that $C$ is not a minimal code. 
\end{proof}

The minimum size of a subset $B\subseteq \F_q^k$ such that $\cup_{\zeta \in \mathbb{F}_q} \zeta B$ is a $2$-blocking set in $\F_q^k$ is equal to 
$b^*_q(k, 1)$ by Lemma~\ref{lem:equivalence_strong_affine}. Moreover, we have $b_q(k, 2) \leq (q - 1) b^*_q(k, 1) + 1$. 
Note that $b_3^*(k, 1)$ is the smallest size of a symmetric $2$-blocking set in~$\F_3^k$ as defined before. 

\begin{cor}
\label{cor:linear_trifferent}
For all positive integers $k,n$, we have
\[
T_L(n)\geq 3^k \iff b^*_3(k,1)\leq n.
\]
\end{cor}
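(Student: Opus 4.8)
The plan is to unpack both directions of the biconditional by translating between the object ``linear trifferent code of length $n$ and dimension $k$'' and the object ``symmetric $2$-blocking set in $\F_3^k$ of size at most $n$'' (equivalently, a strong blocking set in $\mathrm{PG}(k-1,3)$ of size at most $n$), using Theorem~\ref{thm:triff_to_block} as the bridge. The only subtlety is that $T_L(n) \geq 3^k$ asserts the existence of a linear trifferent code of \emph{some} dimension $\geq k$, whereas $b'_3(k,2) \leq n$ is a statement about dimension exactly $k$; both issues are handled by elementary projection/padding arguments.

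For the direction $b'_3(k,2) \leq n \implies T_L(n) \geq 3^k$: suppose there is a strong blocking set in $\mathrm{PG}(k-1,3)$ of size $m \leq n$. Form the $k \times m$ matrix $G_0$ whose columns are representatives of these $m$ points; it has rank $k$ since a strong blocking set spans the whole space. By Lemma~\ref{lem:equivalence_strong_affine} (in the symmetric form noted just before the corollary), the set of columns $B$ of $G_0$ satisfies that $\{\vec 0\} \cup B \cup -B$ is a $2$-blocking set, so Theorem~\ref{thm:triff_to_block} gives that the row-space of $G_0$ is a linear trifferent code of length $m$ and dimension $k$. To get length exactly $n$, append $n - m$ extra coordinates: pad $G_0$ with $n-m$ columns each equal to some fixed nonzero column already present in $B$ (or repeat the first column). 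Repeating a coordinate of a trifferent code keeps it trifferent — for any three distinct codewords, the coordinate witnessing $\{x_i,y_i,z_i\} = \F_3$ is still present — and the rank stays $k$. Hence $T_L(n) \geq 3^k$.

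For the direction $T_L(n) \geq 3^k \implies b'_3(k,2) \leq n$: let $C \subseteq \F_3^n$ be a linear trifferent code with $|C| \geq 3^k$, so $\dim C = k' \geq k$. Pick a generator matrix $G' \in \F_3^{k' \times n}$ of rank $k'$; by Theorem~\ref{thm:triff_to_block}, the column set $B'$ of $G'$ yields a symmetric $2$-blocking set $\{\vec 0\} \cup B' \cup -B'$ in $\F_3^{k'}$, i.e.\ (via Lemma~\ref{lem:equivalence_strong_affine}) a strong blocking set $\mathcal{L}'$ in $\mathrm{PG}(k'-1,3)$ of size $|B'| \leq n$. It remains to descend from dimension $k'$ to dimension $k$. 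The clean way is to observe that a strong blocking set in $\mathrm{PG}(k'-1,3)$ projects, under a generic linear map $\F_3^{k'} \to \F_3^{k}$ (equivalently, quotienting by a suitable $(k'-k)$-dimensional subspace avoiding enough structure), to a strong blocking set in $\mathrm{PG}(k-1,3)$ of size at most $n$; alternatively and more elementarily, delete rows of $G'$ one at a time, checking that deleting a row of a trifferent code's generator matrix — as long as the remaining matrix still has full row rank — preserves triffidence, since the smaller code is a quotient (hence the property $\{u_i,v_i,w_i\}=\F_3$ only needs to survive for a subset of triples). Iterating down to $k$ rows gives a rank-$k$ matrix in $\F_3^{k \times n}$ whose row-space is trifferent, so by Theorem~\ref{thm:triff_to_block} its columns form a symmetric $2$-blocking set in $\F_3^k$, whence $b'_3(k,2) \leq n$.

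The main obstacle is the dimension-matching in the second direction: one must argue that passing to a quotient of a linear trifferent code stays trifferent. This is where care is needed, because a ``quotient'' of a code is not canonically a code of the same length — the right framing is that deleting a row from a full-rank generator matrix $G'$ replaces $C$ by the image of $C$ under a surjection $\F_3^{k'} \to \F_3^{k'-1}$ of the \emph{message} space, which corresponds on the geometric side to contracting a point of $\mathrm{PG}(k'-1,3)$, and a contraction of a strong blocking set is a strong blocking set of no larger size. Once this reduction step is established (or one simply invokes the known monotonicity of strong blocking sets under projection, as used elsewhere in this area), the rest is bookkeeping with Theorem~\ref{thm:triff_to_block} and Lemma~\ref{lem:equivalence_strong_affine}.
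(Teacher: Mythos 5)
Your proposal is correct and uses the same bridge (Theorem~\ref{thm:triff_to_block} together with Lemma~\ref{lem:equivalence_strong_affine}) in both directions, and your padding-by-repeated-columns step in the first direction matches the paper's ``each element of $B$ occurs at least once as a column.'' The one place where you diverge, and where you create unnecessary difficulty for yourself, is the dimension-reduction in the direction $T_L(n)\geq 3^k\implies b'_3(k,2)\leq n$. The paper simply picks $k$ linearly independent codewords of $C$ and uses them as the rows of $G\in\F_3^{k\times n}$; the row-space of $G$ is then a $k$-dimensional \emph{subcode} of $C$, and subcodes of a trifferent code are trivially trifferent (any triple of distinct vectors in the subcode is already a triple of distinct vectors in $C$), so Theorem~\ref{thm:triff_to_block} applies at once. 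You instead start with a full generator matrix in $\F_3^{k'\times n}$ and iteratively delete rows, describing the result as a ``quotient'' of $C$ corresponding to ``contracting a point'' or ``projecting'' the strong blocking set, and then you flag this quotient/projection step as ``where care is needed.'' That framing is misleading: deleting a row of a full-rank generator matrix does not produce a quotient of $C$ but a \emph{subcode} of $C$ (it restricts the message space to $u_{k'}=0$), and the corresponding geometric operation on the column set is a coordinate deletion/restriction, not a projection from a point. Once this is said correctly the alleged obstacle evaporates, which is exactly why the paper's version is a one-liner. Your argument reaches the same destination, but the route mislabels the key operation and worries about a hard fact (monotonicity of strong blocking sets under arbitrary projection) when only the trivial fact (subcodes inherit triffidence) is needed.
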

\begin{proof}
For the forward implication, suppose that $C\subseteq \F_3^n$ is a linear trifferent code with $|C|\geq 3^k$. Let $G\in \F_3^{k\times n}$ be a matrix whose rows are $k$ linearly independent vectors from $C$. Let $B$ be the set of columns of $G$. Then $|B|=n$ and $\{\vec{0}\}\cup B\cup -B$ is a 2-blocking set in $\F_3^k$. So $b^*_3(k,1)\leq n$. 

For the backward implication, let $\{\vec{0}\}\cup B\cup -B$ be a $2$-blocking set in $\F_3^k$ with $|B|=b^*_3(k,1)\leq n$. 
Let $G\in \F_3^{k\times n}$ be the matrix with the elements of $B$ as columns (each element of $B$ occurs at least once as a column). Note that since $\{\vec{0}\}\cup B\cup -B$ is a $2$-blocking set, it is not contained in a linear hyperplane of $\F_3^k$. 
So the matrix $G$ has rank $k$. It follows that the row space of $G$ is a $k$-dimensional linear trifferent code, so $T_L(n)\geq 3^k$.
\end{proof}

\subsection{Lower bound}
By Theorem~\ref{thm:blocking_upper} we have $b_3(k,2)\leq 1+16k/\log_3(81/25)$. Moreover, the obtained random $2$-blocking set is a union of lines by construction. 
Therefore, we also have $b_3^*(k, 1) \leq 8k/\log_3(81/25)$. 
Let $k=\lfloor n\frac{\log_3(81/25)}{8}\rfloor$. 
Then $b^*_3(k,1)\leq n$. 
From Corollary~\ref{cor:linear_trifferent}, it follows that 
\[T_L(n)\geq 3^k\geq \tfrac{1}{3}3^{n\frac{\log_3(81/25)}{8}}=\tfrac{1}{3}(9/5)^{n/4},\] 
thus proving the lower bound in Theorem~\ref{thm:trifferent_bounds}.

\subsection{Upper bound}

Let $C \subseteq \mathbb{F}_3^n$ be a linear trifferent code of size $k = T_L(n)$. 
Then by Theorem~\ref{thm:triff_to_block}, $C$ gives rise to a set $B \subseteq \F_3^k$ of size $n$ such that $\{\vec{0}\} \cup B \cup -B$ is an affine $2$-blocking set. 
The equivalence given in Lemma~\ref{lem:equivalence_strong_affine} shows that we thus have a strong blocking set of size $m \leq n$ in $\mathrm{PG}(k - 1, 3)$. 
A computer calculation shows that $c_3 > 1.1375$, so by Theorem~\ref{thm:strong_lower} we have $n \geq m > 4.55(k-1)$ for sufficiently large $k$.
Therefore, $T_L(n) < n/4.55 + 1$ for sufficiently large $n$, thus proving the upper bound in Theorem~\ref{thm:trifferent_bounds}.

\subsection{Explicit construction}

The explicit construction outlined in Section~\ref{sec:explicit} gives us an affine $2$-blocking set in $\mathbb{F}_3^k$ of size $8ck + 1$.
This gives us an explicit construction of linear trifferent codes of length $n$ and dimension at least $\frac{n}{4c}$ because the construction is from a strong blocking set, and hence a union of lines through the origin. 
The best constant $c$ that we get from~\cite{abdn2022} is not good enough to improve the construction from~\cite{wx2001}, which has size dimension $n/112$. 
However, the construction in~\cite{bartoli2021small} does manage to improve the state of the art for $q = 3$. 
In particular, Corollary 3.3 in~\cite{bartoli2021small} (with $q_0 = 3$) implies that there is an explicit construction of strong blocking sets of size at most $48 k_i$ in $\mathrm{PG}(k - 1, q)$, for an infinite sequence of $\{k_i\}_{i \geq 1}$ (see~\cite[Theorem 3.2]{bartoli2021small} for the exact value of $k_i$). 
Therefore, we get an explicit construction of linear trifferent codes of length $n_i \leq 48 k_i$ and dimension $k_i$.
We now improve this explicit construction by computing $T_L(n)$ for some small values of $n$. 
For fixed dimension $k$, the minimum size of a symmetric $2$-blocking set in $\F_3^k$ can be found using integer linear programming:
\begin{equation*}
2b^*_3(k,1)+1=
\begin{array}[t]{ll@{}ll}
\text{min}  & \sum_{v\in \F_3^k} x_v&\\
\text{s.t.}& \sum_{v\in W} & x_v\geq 1,  &\forall W\subseteq \F_3^k\text{ co-dim $2$ affine subspace}\\
& \sum_{v\in H} & x_v\geq 2k-1,  &\forall H\subseteq \F_3^k\text{ affine hyperplane}\\
&&x_{\vec{0}}=1\\
&&x_v-x_{-v}=0&\forall v\in \F_3^k\setminus\{\vec{0}\}\\
                 &&x_{v} \in \{0,1\}, &\forall v\in \F_3^k
\end{array}
\end{equation*}
The inequalities $\sum_{v\in H} x_v\geq 2k-1$ are redundant and follow from the bound 
$b_q(k,1)\geq (q-1)(k-1)+1$. However, adding these inequalities seems to significantly speed up computations\footnote{Using Gurobi 10.0 the values for $k\leq 5$ were found within a few minutes on a personal computer. We were not able to compute the exact value of $b^*_3(6,1)$.}. We obtained the following explicit values for small $k$:
\begin{center}
\begin{tabular}{c|ccccc}
$k$&2&3&4&5&6\\\hline
$b^*_3(k,1)$&4&9&14&19&22--24
\end{tabular}
\end{center}
By Theorem~\ref{thm:triff_to_block}, this implies that 
\[
T_L(n)=\begin{cases}
3^1 & \text{for }n\leq 3,\\
3^2 & \text{for }4\leq n\leq 8,\\
3^3 & \text{for }9\leq n\leq 13,\\
3^4 & \text{for }14\leq n\leq 18,\\
3^5 & \text{for }19\leq n\leq 21.
\end{cases}
\]

In particular, we have found a linear trifferent code of dimension $6$ and length $24$, that we now use to find our general explicit construction. 

\begin{proof}[Proof of Theorem~\ref{thm:explicit_trifferent}]
    Let $C_\mathrm{in}$ be the $[24, 6]_3$ trifferent code defined by the following generator matrix.

\[
    \begin{bmatrix}
101111101000011011111111 \\
012200000101120112210201 \\
121212202202012220112100 \\
110110100011120021110011 \\
220002101012122202102210 \\
010002111020111202200000
\end{bmatrix}
\]
Let $C_\mathrm{out}$ be an explicit infinite family of $[N, R N, \delta N]_{3^6}$ codes with $\delta = 2/3$ and $R = 1/3 - 1/(\sqrt{3^6} - 1)$. Such a family can be constructed using algebraic-geometric codes \cite{tsfasman2013algebraic}. 
Then by the argument in \cite{alon1986explicit} we know that $C_\mathrm{out}$ has the trifference property. 
Therefore, the concatenation $C_\mathrm{out} \circ_\pi C_\mathrm{in}$ is an $\mathbb{F}_3$-linear trifferent code of length $n = 24N$ and dimension $k = 6RN$, and $k/n = R/4 = 23/312.$
\end{proof}

\section{Conclusion}
In this paper, we established new connections between affine blocking sets, strong blocking sets, and trifferent codes.
We obtained new bounds on affine blocking sets which improve the state of the art for bounds on the latter two objects as well. 
Moreover, using the recent explicit constructions of strong blocking sets, we gave new explicit constructions of trifferent codes, beating the current bound. 
Despite this progress, many interesting problems remain open. 

Recall that $b_q(k, s)$ denotes the smallest size of an affine $s$-blocking set in $\F_q^k$. 
While we can prove upper bounds on $b_q(k, s)$ that, for any fixed $s$, are only a constant factor away from the lower bound given in~\eqref{eq:blocking_lb_basic}, the problem of determining the asymptotics of $b_q(k, s)$ when $s$ varies with $k$ is wide open. 

\begin{ques}
What is the asymptotic growth of $b_q(k, s)$, for fixed $q$, $s = \Theta(k)$, and $k \rightarrow \infty$? 
\end{ques}

While we could improve the lower bounds on strong $(s - 1)$-blocking sets, we are unable to improve the lower bounds on affine $s$-blocking sets given in~\eqref{eq:blocking_lb_basic}, for fixed $s$ and $q, k$ large. 
In particular, we ask the following. 
\begin{ques}
For every fixed prime power $q$, is there a constant $C_q > 1$ such that $b_q(k, 2) \geq C_q q^2 k$, for large enough $k$?
\end{ques}

Finally, we proved a lower bound on linear trifferent codes that is asymptotically equal to the best lower bound on trifferent codes. 
Our lower bound is based on the new upper bound on strong blocking sets, obtained by picking a random set of planes through the origin in $\F_q^k$. 
Any improvement in our argument would be very interesting, as it might lead to a breakthrough for the trifference problem. 

\begin{ques}
Is $\liminf_{n \rightarrow \infty} \frac{\log_3(T_L(n))}{n} > \frac{\log_3(9/5)}{4}$?
\end{ques}

The data on $T_L(n)$ for small $n$, that we computed in Section~\ref{sec:explicit}, suggests that $\lim_{n \rightarrow \infty}\log_3(T_L(n))/n = 1/5$, but we are not too confident to make that conjecture. 

While it is natural to extend the notion of linear perfect $3$-hash codes to linear perfect $q$-hash codes, with $q > 3$, the following argument shows that these objects are trivial.
Let $\F_q$ be a finite field with $q > 3$. 
We show that there is no linear perfect $q$-hash code $C\subseteq \F_q^n$ of dimension $\geq 2$. 
Suppose that $C\subseteq \F_q^n$ is a subspace of dimension $\geq 2$, and let $u, v$ be two linearly independent vectors.  Write $\F_q\setminus\{0\} = \{\zeta^i : 0 \leq i \leq q - 2\}$ and consider the set of vectors $\{u, \zeta u\}\cup\{v, \zeta v, \zeta^2 v, \ldots, \zeta^{q-3} v\}$. Say there is a coordinate $i$ where they are all distinct. Since $q>3$, $u_i$ and $v_i$ must both be nonzero. But we then have $q$ distinct nonzero coordinates, which is impossible. So $C$ cannot be a perfect $q$-hash code.

Therefore, it is more sensible to study linear codes $C \subseteq \F_q^n$, with the property that for any $t$ distinct codewords in $C$, there is a coordinate where they are all pairwise distinct, for some parameter $t < q$.
Such codes have been studied in the literature under the name of linear perfect hash families, and in fact it can be shown that these codes cannot exist for $t \geq c \sqrt{q}$  for some constant $c$ (see \cite[Section 5]{blackburn1998optimal}). 
In \cite{wx2001}, an explicit construction is given that has dimension at least a linear function of $n$, when $t = O(q^{1/4})$.
It would be interesting to improve these results on perfect hash families in view of our work.

\section*{Acknowledgement}
The research of Jozefien D’haeseleer is supported by the FWO (Research Foundation Flanders).
We thank Alessandro Neri and the anonymous reviewer for their helpful comments. 

{\footnotesize
\bibliographystyle{plain}
\bibliography{sample.bib}}

\appendix
\section{Appendix}
\label{sec:appendix}

\begin{lemma}
\label{lem:c_q}
For every prime power $q$, $M_q((q - 1)/(q + 1)) < 1/(q + 1)$.
\end{lemma}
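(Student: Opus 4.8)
The plan is to substitute $\delta=(q-1)/(q+1)$ directly into the definition of $M_q$, simplify the argument of the $q$-ary entropy function, and then bound the entropy from above by a quantity that is visibly less than $1/(q+1)$. First I would evaluate the entropy argument: with $\delta=(q-1)/(q+1)$ one has $1-\delta=2/(q+1)$, hence $(q-1)\delta(1-\delta)=2(q-1)^2/(q+1)^2$ and $2\sqrt{(q-1)\delta(1-\delta)}=2\sqrt{2}\,(q-1)/(q+1)$. Combining this with $(q-2)\delta=(q-2)(q-1)/(q+1)$ and factoring $(q-1)/(q+1)$ out of $q-1-(q-2)\delta-2\sqrt{(q-1)\delta(1-\delta)}$ collapses the remaining bracket to $(q+1)-(q-2)-2\sqrt{2}=3-2\sqrt{2}$. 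Setting $\beta:=3-2\sqrt{2}$, so that $\beta\in(0,1)$ and $1/\beta=3+2\sqrt{2}$, this shows $M_q\big(\tfrac{q-1}{q+1}\big)=\ent_q(\alpha)$ with $\alpha:=\tfrac{(q-1)\beta}{q(q+1)}\in(0,1)$.

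Next I would bound $\ent_q(\alpha)$ from above. Consider $h(x)=x+(1-x)\ln(1-x)$ on $[0,1)$: since $h(0)=0$ and $h'(x)=-\ln(1-x)\ge 0$, we get $-(1-x)\ln(1-x)\le x$; dividing by $\ln q$ and substituting into the definition of $\ent_q$ yields the standard estimate $\ent_q(x)\le x\log_q\!\big(\tfrac{(q-1)e}{x}\big)$ for $x\in(0,1)$. Applying this with $x=\alpha$ and using $(q-1)e/\alpha=eq(q+1)/\beta$, it suffices to establish
\[ \frac{q-1}{q}\,\beta\,\log_q\!\Big(\frac{eq(q+1)}{\beta}\Big) < 1 . \]

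To finish, for $q\ge 3$ I would use $(q-1)/q<1$ and $q+1\le 2q$ to reduce the displayed inequality to $\beta\big(2+\log_q(2e/\beta)\big)<1$, equivalently $\log_q\!\big(2e(3+2\sqrt{2})\big)<1+2\sqrt{2}$, equivalently $q^{1+2\sqrt{2}}>2e(3+2\sqrt{2})$; since $2e(3+2\sqrt{2})<32$ while $3^{1+2\sqrt{2}}>3^{3.8}>32$, this holds for every prime power $q\ge 3$. For $q=2$ the displayed inequality reads $\tfrac{1}{2}\beta\log_2\!\big(6e(3+2\sqrt{2})\big)$, which is at most $\tfrac{1}{2}\cdot 0.172\cdot\log_2(96)<\tfrac{1}{2}\cdot 0.172\cdot 6.59<1$, completing the argument.

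I do not expect a genuine obstacle here, since the statement is ultimately a concrete numerical inequality; the care lies in the first step, where one must check that the $\sqrt{2}$ terms combine exactly so that $3-2\sqrt{2}$ factors out cleanly, and in the last step, where the crude estimates must be kept tight enough to still work at the two smallest prime powers $q=2$ and $q=3$, where the numerical margin is slimmest.
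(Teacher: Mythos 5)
Your argument is correct and follows essentially the same route as the paper: substitute $\delta=(q-1)/(q+1)$ and collapse the entropy argument to $\alpha=\tfrac{(q-1)}{q(q+1)}(3-2\sqrt2)$, then bound $\ent_q(\alpha)$ above by an expression of the form $\alpha\log_q(\cdot)$ and finish by a numerical check that is tightest at the smallest $q$. The only cosmetic difference is which elementary estimate you use for the $-(1-x)\log_q(1-x)$ term: you use $-(1-x)\ln(1-x)\le x$, yielding $\ent_q(x)\le x\log_q\bigl(\tfrac{(q-1)e}{x}\bigr)$, whereas the paper drops the factor $(1-x_\delta)$ and then uses $-\log_q(1-x)<2x$; your bound is slightly tighter and has the minor advantage of handling $q=2$ through the same formula rather than as a separate numerical case.
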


\begin{proof}
For $q = 2$, one can easily verify that $M_2(1/3) < 1/3$, so we assume $q \geq 3$ for the rest of the argument. 
Let $\delta := \frac{q-1}{q+1}$.
We have, after a little algebraic manipulation,
\begin{align*}
x_{\delta} &:= 1 - \frac{1}{q} - \left(1 - \frac{2}{q}\right)\delta - \frac{2}{q}\sqrt{(q-1)\delta(1 - \delta)} \\
&= \frac{q-1}{q(q+1)}\left(3 - 2\sqrt{2} \right).
\end{align*}
Let $w : = 3 - 2\sqrt{2}$, so that $x_{\delta} = w \frac{q-1}{q(q+1)}$, and so one may verify that
\begin{equation}\label{eqn:calc1}
-\frac{(q-1)}{q} w\log_q w  < 0.19 
\end{equation}
for $q \geq 3$. 
We also have the following inequality for every $q \geq 3$,
\begin{equation}\label{eqn:calc3}
\frac{q-1}{q}\log_q(q(q+1)) = \frac{2(q-1)}{q} + \frac{(q-1)}{q}\log_q(1 + 1/q) <\frac{2(q-1)}{q} + \frac{q-1}{q^2} < 2.
\end{equation}
Here we have used the fact that 
\[
\log_q(1 + 1/q)=\frac{\ln(1 + 1/q)}{\ln q} < \frac{1}{q},
\]
for $q \geq 3$. 
Finally, note that 
\begin{equation}\label{eqn:calc2}
x_{\delta} = \frac{w(q-1)}{q(q+1)} < \frac{w}{q+1}
\end{equation}

This gives us
\begin{align*}
M_q\left(\frac{q-1}{q+1} \right) & = x_{\delta} \log_q(q-1) - x_{\delta}\log_qx_{\delta} - (1 - x_{\delta})\log_{q}(1 - x_{\delta})\\
&\leq x_{\delta} \log_q(q-1) - x_{\delta}\log_qx_{\delta} - \log_{q}(1 - x_{\delta})\\
& \leq x_{\delta}\log_q\left(\frac{q-1}{x_{\delta}}\right) + 2x_{\delta}\\
& = -\frac{w(q-1)}{q(q+1)}\log_q{w} + \frac{w(q-1)}{q(q+1)}\log_q(q(q+1)) +  2x_{\delta} \\
& \leq \frac{0.19 + 4w}{q+1} \\
& < \frac{1}{q + 1}. 
\end{align*}
For the second inequality, we used the fact that $-\log_q(1 - x) < 2x$ for $x < 0.5$,  and for the second last inequality, we used~\eqref{eqn:calc1},\eqref{eqn:calc3}, and~\eqref{eqn:calc2}. 

\end{proof}

\begin{cor}
 There is a unique solution $c_q$ to $M_q((q - 1)/(x(q + 1)) = 1/(x(q+1))$, and $x \geq 1$. 
 Moreover, $c_q > 1$. 
\end{cor}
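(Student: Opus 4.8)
The plan is to use the properties of $M_q$ established in the excerpt together with Lemma~\ref{lem:c_q}. Recall that $M_q$ is continuous and strictly decreasing on $[0, 1 - 1/q]$, with $M_q(0) = 1$ and $M_q(1 - 1/q) = 0$. Define the function $f(x) := M_q\!\left(\frac{q-1}{x(q+1)}\right) - \frac{1}{x(q+1)}$ for $x \geq 1$. First I would check that $f$ is well-defined on this range: for $x \geq 1$ we have $\frac{q-1}{x(q+1)} \leq \frac{q-1}{q+1} < 1 - \frac 1q$, so the argument of $M_q$ lies in the domain where $M_q$ is defined (and positive). Thus $c_q$ being a solution to $f(x) = 0$ with $x \geq 1$ is exactly the claim.

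Next I would establish the sign of $f$ at the endpoints of the relevant interval. At $x = 1$, Lemma~\ref{lem:c_q} gives $M_q\!\left(\frac{q-1}{q+1}\right) < \frac{1}{q+1}$, i.e.\ $f(1) < 0$. As $x \to \infty$, the argument $\frac{q-1}{x(q+1)} \to 0$, so $M_q$ of it tends to $M_q(0) = 1$ by continuity, while $\frac{1}{x(q+1)} \to 0$; hence $f(x) \to 1 > 0$. By the intermediate value theorem there exists some $x_0 > 1$ with $f(x_0) = 0$, and since $f(1) < 0$ strictly, any such root satisfies $x_0 > 1$, giving $c_q > 1$.

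For uniqueness, I would argue that $f$ is strictly increasing on $[1, \infty)$. The term $-\frac{1}{x(q+1)}$ is strictly increasing in $x$. For the term $M_q\!\left(\frac{q-1}{x(q+1)}\right)$: as $x$ increases, $\frac{q-1}{x(q+1)}$ strictly decreases, and since $M_q$ is strictly decreasing on its domain, the composition $x \mapsto M_q\!\left(\frac{q-1}{x(q+1)}\right)$ is strictly increasing. Hence $f$ is a sum of two strictly increasing functions, so strictly increasing, and therefore has at most one zero. Combined with the existence argument, this yields a unique $c_q$, and $c_q > 1$.

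The main obstacle here is minor: one must make sure the monotonicity and continuity claims about $M_q$ quoted in the excerpt genuinely hold on the half-open interval in question and that the argument $\frac{q-1}{x(q+1)}$ never leaves $(0, 1-1/q)$ for $x \geq 1$ — both of which follow immediately from $\frac{q-1}{q+1} < 1 - \frac 1q$. The only genuinely new input needed is Lemma~\ref{lem:c_q}, which is precisely what pins down $f(1) < 0$ and hence forces the root to exceed $1$; everything else is a routine intermediate-value-theorem plus strict-monotonicity argument.
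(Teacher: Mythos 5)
Your proof is correct and takes essentially the same approach as the paper: the paper substitutes $y = 1/x$ and compares the strictly decreasing function $y \mapsto M_q((q-1)y/(q+1))$ with the strictly increasing function $y \mapsto y/(q+1)$, while you keep the variable $x$ and show the difference function is strictly increasing on $[1,\infty)$. Both reduce to Lemma~\ref{lem:c_q} at the endpoint plus an intermediate value and strict monotonicity argument, so the proposal matches the paper's proof in substance.
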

\begin{proof}
The function $f(y) \coloneqq M_q((q - 1)y/(q + 1))$ is a continuous strictly decreasing function for $0 \leq y \leq 1$, with $f(0) = 1$, and the function $g(y) \coloneqq y/(q + 1)$ is a continuous strictly increasing function, with $g(0) = 0$. 
We have just shown that $g(1) > f(1)$, and thus there must exist a unique $0 < y_q < 1$ for which $f(y_q) = g(y_q)$. 
Therefore, $c_q = 1/y_q > 1$. 
\end{proof}

\begin{remark}
The proof also shows that $c_q$ is the maximum $x \geq 1$ for which $M_q((q - 1)/(x(q+1)) \leq 1/(x(q+1))$. 
Using similar arguments, one may show that $c_q > 1 + 1/(2000q)$, for all prime powers $q$. 
\end{remark}

\end{document}